\RequirePackage{fix-cm}
\documentclass{siamart171218}
\bibliographystyle{siamplain}
\newsiamremark{remark}{Remark}
\usepackage{algorithmic} 
\Crefname{ALC@unique}{Line}{Lines} 

\usepackage{amsmath}
\usepackage{amssymb}
\usepackage{mathtools}
\usepackage{cases}
\usepackage{fancyvrb}
\VerbatimFootnotes
\usepackage{pgfplots, pgfplotstable, booktabs, colortbl, array}
\usepackage{bibentry}
\nobibliography*
\usepackage{enumitem}

\usepackage[T1]{fontenc}

\overfullrule=0mm

\usepackage{pgfplots, pgfplotstable, booktabs, colortbl, array}
\usepackage{tikz}
\pgfplotsset{compat=newest}
\usepackage{pgffor}
\usepackage{xcolor}

\DeclareMathOperator*{\argmin}{arg\,min}
\newcommand*{\mysqrt}[2][2]{\sqrt[\leftroot{0}\uproot{3}#1]{#2}}

\renewcommand{\Re}{\operatorname{Re}}
\renewcommand{\Im}{\operatorname{Im}}

\usepackage{tikz}
\usetikzlibrary{positioning,arrows}

\begin{document}

\title{Rational Minimax Iterations for Computing the Matrix \MakeLowercase{$p$}th Root}

\author{Evan S. Gawlik\thanks{Department of Mathematics, University of Hawaii at Manoa
    (\email{egawlik@hawaii.edu})}}
    
\date{}

\headers{Rational minimax iterations for computing the matrix \MakeLowercase{$p$}th root}{E. S. Gawlik}

\maketitle

\begin{abstract}
In~[\bibentry{gawlik2018zolotarev}], a family of iterations for computing the matrix square root was constructed by exploiting a recursion obeyed by Zolotarev's rational minimax approximants of the function $z^{1/2}$.  The present paper generalizes this construction by deriving rational minimax iterations for the matrix $p^{th}$ root, where $p \ge 2$ is an integer.  The analysis of these iterations is considerably different from the case $p=2$, owing to the fact that when $p>2$, rational minimax approximants of the function $z^{1/p}$ do not obey a recursion.  Nevertheless, we show that several of the salient features of the Zolotarev iterations for the matrix square root, including equioscillatory error, order of convergence, and stability, carry over to case $p>2$.  A key role in the analysis is played by the asymptotic behavior of rational minimax approximants on short intervals.  Numerical examples are presented to illustrate the predictions of the theory.
\end{abstract}

\begin{keywords}
Matrix root, matrix power, rational approximation, minimax, uniform approximation, matrix iteration, Chebyshev approximation, Pad\'e approximation, Newton iteration, Zolotarev
\end{keywords}
\begin{AMS}
65F30, 65F60, 41A20, 49K35
\end{AMS}

\section{Introduction} \label{sec:intro}

In recent years, a growing body of literature has highlighted the usefulness of \emph{rational minimax iterations} for computing functions of matrices~\cite{nakatsukasa2010optimizing,nakatsukasa2016computing,gawlik2018zolotarev,gawlik2018backward,byers2008new}.   In these studies, $f(A)$ is approximated by a rational function $r$ of $A$ possessing two properties: $r$ closely (and often optimally) approximates $f$ in the uniform norm over a subset of the real line, and $r$ can be generated from a recursion.  A prominent example of such an iteration was introduced by Nakatsukasa and Freund in~\cite{nakatsukasa2016computing}, where it was observed that rational minimax approximants of the function $\mathrm{sign}(z) = z/(z^2)^{1/2}$ obey a recursion, allowing one to rapidly compute $\mathrm{sign}(A)$ and related decompositions such as the polar decomposition, symmetric eigendecomposition, SVD, and, in subsequent work, the CS decomposition~\cite{gawlik2018backward}.  An analogous recursion for rational minimax approximants of $z^{1/2}$ has recently been used to construct iterations for the matrix square root~\cite{gawlik2018zolotarev}, building upon ideas of Beckermann~\cite{beckermann2013optimally}.  There, the iterations are referred to as \emph{Zolotarev iterations}, owing to the role played by explicit formulas for rational minimax approximants of $\mathrm{sign}(z)$ and $z^{1/2}$ derived by Zolotarev~\cite{zolotarev1877applications}.

The aim of this paper is to introduce a family of rational minimax iterations for computing the principal $p^{th}$ root $A^{1/p}$ of a square matrix $A$, where $p \ge 2$ is an integer.  
Recall that the principal $p^{th}$ root of a square matrix $A$ having no nonpositive real eigenvalues is the unique solution of $X^p=A$ whose eigenvalues are contained in $\{z \in \mathbb{C} \mid -\pi/p < \arg z < \pi/p\}$~\cite[Theorem 7.2]{higham2008functions}.
The iterations we propose reduce to the Zolotarev iterations for the matrix square root~\cite{gawlik2018zolotarev} when $p=2$, but when $p>2$, they differ from the Zolotarev iterations in several important ways.    
Notably, for all integers $p \ge 2$, the iterations generate a rational function of $r$ of $A$ which has the property that for scalar inputs, the relative error $e(z)=(r(z)-z^{1/p})/z^{1/p}$ equioscillates on a certain interval $[a,b]$ (see Section~\ref{sec:background} for our terminology).  Remarkably, when $p=2$, $e(z)$ equioscillates often enough to render $\max_{a \le z \le b} |e(z)|$ minimal among all choices of $r$ with a fixed numerator and denominator degree~\cite{gawlik2018zolotarev}.  This optimality property is the hallmark of the Zolotarev iterations, and it allows one to appeal to classical results from rational approximation theory to estimate the maximum relative error.  When $p>2$, no such optimality property holds.  
Much of this paper is devoted to showing that the rational minimax iterations for the $p^{th}$ root still enjoy many of the same desirable features as the Zolotarev iterations for the square root, despite the absence of optimality in the case $p>2$.  We take care to present our results in such a way that when $p=2$, the salient features of the Zolotarev iterations are recovered as special cases.

There are a number of connections between the iterations we derive and existing iterations from the literature on the matrix $p^{th}$ root. 
We have already mentioned that they reduce to the Zolotarev iterations when $p=2$.  For arbitrary $p \ge 2$, the two lowest order versions of our rational minimax iterations are scaled variants of the Newton iteration and the inverse Newton iteration~\cite[Chapter 6]{higham2008functions},~\cite[Section 6]{bini2005algorithms},~\cite{iannazzo2006newton}.  In another limiting case, our iterations reduce to the Pad\'e iterations~\cite[Section 5]{laszkiewicz2009pade}.  
Relative to these iterations, the rational minimax iterations offer advantages primarily when the matrix $A$ has eigenvalues with widely varying magnitudes.  As an extreme example, if $p=3$ and $A$ is Hermitian positive definite with condition number $\le 10^{16}$, convergence is achieved in double-precision arithmetic after just $2$ iterations when using our type-$(6,6)$ rational minimax iteration.  In contrast, up to $5$ iterations are needed when using the type-$(6,6)$ Pad\'e iteration.  Our numerical experiments indicate that the situation is similar, but less dramatic, for non-normal matrices with eigenvalues away from the positive real axis.

This paper is organized as follows.  In Section~\ref{sec:background}, we review the Zolotarev iterations for the matrix square root by summarizing the contents of~\cite{gawlik2018zolotarev}.  In Section~\ref{sec:pthroot}, we introduce rational minimax iterations for the matrix $p^{th}$ root and present our main results: Theorem~\ref{thm:fit}, Theorem~\ref{thm:pthroot}, and their corollaries.  Proofs of these results are provided separately in Section~\ref{sec:proofs}.  Finally, Section~\ref{sec:numerical} presents numerical experiments that illustrate the predictions of the theory.

\section{Background: Zolotarev iterations for the matrix square root} \label{sec:background}

Let us summarize the Zolotarev iterations for the matrix square root and their key properties~\cite{gawlik2018zolotarev}.  Let $\mathcal{R}_{m,\ell}$ denote the set of all rational functions of type $(m,\ell)$ -- ratios of polynomials of degree $\le m$ to polynomials of degree $\le \ell$.  We say that a function $r(z) = g(z)/h(z)$ in $\mathcal{R}_{m,\ell}$ has \emph{exact type} $(m',\ell')$ if, after canceling common factors, $g(z)$ and $h(z)$ have degree exactly $m' \le m$ and $\ell' \le \ell$, respectively.  The number $d = \min\{m-m',\ell-\ell'\}$ is called the \emph{defect} of $r$ in $\mathcal{R}_{m,\ell}$.  In most of what follows, $z$ is a real variable; we use the letter $z$ since the behavior of $r$ on $\mathbb{C}$ will play an important role later in the paper.

Given a continuous, increasing bijection $f : [0,1] \rightarrow [0,1]$ and a number $\alpha \in (0,1)$, let  $r_{m,\ell}(z, \alpha, f)$ denote the best type-$(m,\ell)$ rational approximant of $f(z)$ on $[f^{-1}(\alpha),1]$:
\begin{equation} \label{rml}
r_{m,\ell}(\cdot, \alpha, f) = \argmin_{r \in \mathcal{R}_{m,\ell}} \max_{z \in [f^{-1}(\alpha),1]} \left| \frac{r(z)-f(z)}{f(z)} \right|.
\end{equation}
It is well-known that the minimization problem above has a unique solution~\cite[p. 55]{akhiezer1956theory}.
Furthermore, explicit formulas for $r_{m,\ell}(\cdot, \alpha, \sqrt{\cdot})$ are known for $\ell \in \{m-1,m\}$~\cite{zolotarev1877applications}.
Let $\hat{r}_{m,\ell}(z, \alpha, f)$ denote the unique scalar multiple of $r_{m,\ell}(z,\alpha,f)$ with the property that
\begin{equation} \label{rhat}
\min_{z \in [f^{-1}(\alpha),1]} \frac{ \hat{r}_{m,\ell}(z,\alpha,f) - f(z) }{f(z)} = 0.
\end{equation}
For $m \in \mathbb{N}$ and $\ell \in \{m-1,m\}$, the Zolotarev iteration of type $(m,\ell)$ for computing the square root of a square matrix $A$ reads
\begin{align} 
X_{k+1} &= X_k \hat{r}_{m,\ell}\left( X_k^{-2}A, \alpha_k, \sqrt{\cdot} \right), & X_0 &= I, \label{sqrtmit1} \\
\alpha_{k+1} &= \frac{\alpha_k}{\hat{r}_{m,\ell}(\alpha_k^2,\alpha_k,\sqrt{\cdot})}, & \alpha_0 &= \alpha. \label{sqrtmit2}
\end{align}
It is proven in~\cite{gawlik2018zolotarev} that in exact arithmetic, $X_k \rightarrow A^{1/2}$ and $\alpha_k \rightarrow 1$ with order of convergence $m+\ell+1$ for any $A$ with no nonpositive real eigenvalues.  In floating point arithmetic, it is necessary to reformulate the iteration to ensure its stability; we detail the stable reformulation of~(\ref{sqrtmit1}-\ref{sqrtmit2}) later on.

The iteration~(\ref{sqrtmit1}-\ref{sqrtmit2}) has the remarkable property that it generates an optimal rational approximation of $A^{1/2}$ of high degree. Namely, $\widetilde{X}_k := 2\alpha_k X_k/(1+\alpha_k) = r_{m_k,\ell_k}(A,\alpha,\sqrt{\cdot})$, where
\begin{equation} \label{mklk}
(m_k,\ell_k) = \begin{cases}
\left(\frac{1}{2}(2m)^k,\frac{1}{2}(2m)^k-1\right), &\mbox{ if } \ell=m-1, \\
\left(\frac{1}{2}((2m+1)^k-1),\frac{1}{2}((2m+1)^k-1)\right), &\mbox{ if } \ell=m. \\
\end{cases}
\end{equation}
A simple consequence of this is that if $A$ is Hermitian positive definite with eigenvalues in $[\alpha^2,1]$, then
\[
\|(\widetilde{X}_k-A^{1/2})A^{-1/2}\|_2 \le E_{m_k,\ell_k}(\sqrt{\cdot},[\alpha^2,1]),
\] 
where
\[
E_{m,\ell}(f,S) = \min_{r \in \mathcal{R}_{m,\ell}} \max_{z \in S} \left| \frac{r(z)-f(z)}{f(z)} \right|.
\]
For more detailed error estimates, including error estimates for non-normal $A$ with eigenvalues in $\mathbb{C} \setminus (-\infty,0]$, see~\cite{gawlik2018zolotarev}.

\section{Minimax iterations for the matrix $p^{th}$ root} \label{sec:pthroot}

In this paper, we propose an iteration for computing $p^{th}$ roots of matrices that generalizes~(\ref{sqrtmit1}-\ref{sqrtmit2}).  Given $\alpha \in (0,1)$, $m,\ell \in \mathbb{N}_0$, and an integer $p \ge 2$, the iteration reads
\begin{align} 
X_{k+1} &= X_k \hat{r}_{m,\ell}\left( X_k^{-p}A, \alpha_k, \mysqrt[p]{\cdot} \right), & X_0 &= I, \label{pthrootmit1} \\
\alpha_{k+1} &= \frac{\alpha_k}{\hat{r}_{m,\ell}(\alpha_k^p,\alpha_k,\mysqrt[p]{\cdot})}, & \alpha_0 &= \alpha. \label{pthrootmit2}
\end{align}
The Zolotarev iterations~(\ref{sqrtmit1}-\ref{sqrtmit2}) correspond to the cases $\{(m,\ell,p) \mid m \in \mathbb{N}, \, \ell \in \{m-1,m\}, \, p=2\}$ in~(\ref{pthrootmit1}-\ref{pthrootmit2}).  (Note that we abusively referred to these cases as ``the case $p=2$'' in Section~\ref{sec:intro}).

With the exception of the cases $\{(m,\ell,p) \mid m \in \mathbb{N}, \, \ell \in \{m-1,m\}, \, p=2\}$ and $\{(m,\ell,p) \mid (m,\ell) \in \{(0,0),(1,0),(0,1)\}, \, p \ge 2\}$,
explicit formulas for $\hat{r}_{m,\ell}(z,\alpha,\mysqrt[p]{\cdot})$ are not known.
However, $\hat{r}_{m,\ell}(z,\alpha,\mysqrt[p]{\cdot})$ can be computed numerically; see Section~\ref{sec:numerical} for details.  Note that the cost of computing $\hat{r}_{m,\ell}(z,\alpha,\mysqrt[p]{\cdot})$ is independent of the dimension of $A$, so it is expected to be negligible for problems involving large matrices. 

As with the square root iteration~(\ref{sqrtmit1}-\ref{sqrtmit2}), it is necessary to reformulate the $p^{th}$ root iteration~(\ref{pthrootmit1}-\ref{pthrootmit2}) to ensure its stability.  This is accomplished by considering the iteration for $Y_k = X_k^{1-p}A$ and $Z_k = X_k^{-1}$ implied by~(\ref{pthrootmit1}-\ref{pthrootmit2}).  Exploiting commutativity, we have
\begin{align} 
Y_{k+1} &= Y_k h_{\ell,m,p}\left( Z_k Y_k, \alpha_k \right)^{p-1}, & Y_0 &= A, \label{pcoupled1} \\
Z_{k+1} &= h_{\ell,m,p}\left( Z_k Y_k, \alpha_k \right) Z_k, & Z_0 &= I, \label{pcoupled2} \\
\alpha_{k+1} &= \alpha_k h_{\ell,m,p}(\alpha_k^p, \alpha_k), & \alpha_0 &= \alpha, \label{pcoupled3}
\end{align}
where $h_{\ell,m,p}(z,\alpha) = r_{m,\ell}(z,\alpha, \mysqrt[p]{\cdot} )^{-1}$.  (We swapped the order of the first two indices to emphasize that $h_{\ell,m,p}(z,\alpha)$ is a rational function of type $(\ell,m)$, not $(m,\ell)$.)

The remainder of this section presents a series of results about the behavior of the iteration~(\ref{pthrootmit1}-\ref{pthrootmit2}) and its counterpart~(\ref{pcoupled1}-\ref{pcoupled3}).  Proofs of these results are given in Section~\ref{sec:proofs}.

\subsection{Functional iteration}
A great deal of information about the behavior of the iteration~(\ref{pthrootmit1}-\ref{pthrootmit2}) (and hence~(\ref{pcoupled1}-\ref{pcoupled3})) can be gleaned from a study of the functional iteration
\begin{align} 
f_{k+1}(z) &= f_k(z) \hat{r}_{m,\ell}\left( \frac{z}{f_k(z)^p}, \alpha_k, \sqrt[p]{\cdot} \right), & f_0(z) &= 1, \label{pthrootit1} \\
\alpha_{k+1} &= \frac{\alpha_k}{\hat{r}_{m,\ell}(\alpha_k^p,\alpha_k,\sqrt[p]{\cdot})}, & \alpha_0 &= \alpha. \label{pthrootit2}
\end{align}
Indeed, we have $X_k = f_k(A)$ in~(\ref{pthrootmit1}-\ref{pthrootmit2}), and $Y_k=f_k(A)^{1-p}A$ and $Z_k = f_k(A)^{-1}$ in~(\ref{pcoupled1}-\ref{pcoupled3}).

The following theorem summarizes the properties of the functional iteration~(\ref{pthrootit1}-\ref{pthrootit2}).  In the interest of generality, it focuses on a slight generalization of~(\ref{pthrootit1}-\ref{pthrootit2}) that reduces to~(\ref{pthrootit1}-\ref{pthrootit2}) when the function $f$ appearing below is $f(z)=z^{1/p}$.  The theorem makes use of the following terminology.  A continuous function $g(z)$ is said to \emph{equioscillate} $m$ times on an interval $[a,b]$ if there exist $m$ points $a \le z_0 < z_1 < \dots < z_{m-1} \le b$ at which
\[
g(z_j) = \sigma (-1)^j \max_{z \in [a,b]} |g(z)|, \quad j=0,1,\dots,m-1.
\]
for some $\sigma \in \{-1,1\}$.  It is well-known that the minimax approximants~(\ref{rml}) are uniquely characterized by the property that $\frac{r_{m,\ell}(z,\alpha,f)-f(z)}{f(z)}$ equioscillates at least $m+\ell+2-d$ times on $[f^{-1}(\alpha),1]$, where $d$ is the defect of $r_{m,\ell}(z,\alpha,f)$ in $\mathcal{R}_{m,\ell}$~\cite[Theorem 24.1]{trefethen2013approximation}.  We will be particularly interested in those functions $f$ for which:
\begin{enumerate}[label=(\ref*{sec:pthroot}.\roman*),ref=\ref*{sec:pthroot}.\roman*]
\item \label{assumption1} For every $\alpha \in (0,1)$ and $m,\ell \in \mathbb{N}_0$, $r_{m,\ell}(z,\alpha,f)$ has exact type $(m,\ell)$.  Furthermore, $\frac{r_{m,\ell}(z,\alpha,f)-f(z)}{f(z)}$ equioscillates exactly $m+\ell+2$ times on $[f^{-1}(\alpha),1]$, achieves its maximum at $z=f^{-1}(\alpha)$, and achieves an extremum at $z=1$.
\end{enumerate}
The function is $f(z) = z^{1/p}$ satisfies this hypothesis; see Lemma~\ref{lemma:details} for a proof.
\begin{theorem} \label{thm:fit} 
Let $f : [0,1] \rightarrow [0,1]$ be a continuous, increasing bijection satisfying~(\ref{assumption1}).
Let $\alpha \in (0,1)$ and $m,\ell \in \mathbb{N}_0$, and define $f_k(z)$ recursively by
\begin{align} 
f_{k+1}(z) &= f_k(z) \hat{r}_{m,\ell}\left( f^{-1}\left( \frac{f(z)}{f_k(z)} \right), \alpha_k, f \right), & f_0(z) &= 1, \label{genit1} \\
\alpha_{k+1} &= \frac{\alpha_k}{\hat{r}_{m,\ell}(f^{-1}(\alpha_k),\alpha_k,f)}, & \alpha_0 &= \alpha. \label{genit2}
\end{align}
Then, with $\widetilde{f}_k(z) = \frac{2\alpha_k}{1+\alpha_k} f_k(z)$ and $\varepsilon_k = \max_{z \in [f^{-1}(\alpha),1]} \left| \frac{\widetilde{f}_k(z) - f(z)}{f(z)} \right|$, we have:
\begin{enumerate}[label=\textup{(\ref*{sec:pthroot}.\roman*)},ref=\ref*{sec:pthroot}.\roman*,resume]
\item \label{claim1} For every $k \ge 0$,
\begin{equation} \label{alphaeps}
\alpha_k = \frac{1-\varepsilon_k}{1+\varepsilon_k}
\end{equation}
and
\begin{equation} \label{err}
\varepsilon_{k+1} =
 E_{m,\ell}(f, [f^{-1}(\alpha_k),1]).
\end{equation}
\item \label{equi} For every $k \ge 0$, the relative error $\frac{\widetilde{f}_k(z) - f(z)}{f(z)}$ equioscillates $(m+\ell+1)^k+1$ times on $[f^{-1}(\alpha),1]$, and it achieves its extrema at the endpoints.
\item \label{convergence} If $f \in C^{m+\ell+1}([\alpha,1])$, $f^{-1}$ is Lipschitz on $[\alpha,1]$, and $(m,\ell) \neq (0,0)$, then $\varepsilon_k \rightarrow 0$ monotonically with order of convergence $m+\ell+1$ as $k \rightarrow \infty$.
\end{enumerate}
\end{theorem}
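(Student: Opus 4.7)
The plan is to prove all three parts by a single induction on $k$, using the substitution $w(z) := f^{-1}(f(z)/f_k(z))$, which rewrites the iteration as
\[
\frac{\widetilde{f}_{k+1}(z)}{f(z)} = c_{k+1}\cdot\frac{\hat{r}_{m,\ell}(w(z),\alpha_k,f)}{f(w(z))}, \qquad c_{k+1}:=\frac{2\alpha_{k+1}}{1+\alpha_{k+1}},
\]
putting the right-hand side in a form controlled by hypothesis~(\ref{assumption1}) on the fixed interval $[f^{-1}(\alpha_k),1]$. A direct computation gives $c_k = 1-\varepsilon_k$, whence $\widetilde{f}_k(z)/f(z) \in [1-\varepsilon_k,1+\varepsilon_k]$ is equivalent to $w(z) \in [f^{-1}(\alpha_k),1]$. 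The base case $k=0$ is verified directly: $\widetilde{f}_0 \equiv 2\alpha/(1+\alpha)$ gives $\varepsilon_0 = (1-\alpha)/(1+\alpha)$, with $\widetilde{f}_0/f-1$ equioscillating exactly twice at the endpoints and $\alpha_0 = (1-\varepsilon_0)/(1+\varepsilon_0)$ trivially satisfied.

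For the inductive step establishing~(\ref{claim1}), the definition of $\hat{r}_{m,\ell}$ combined with~(\ref{assumption1}) implies that $\hat{r}_{m,\ell}(\cdot,\alpha_k,f)/f$ takes values in $[1,(1+E)/(1-E)]$ on $[f^{-1}(\alpha_k),1]$ with $E := E_{m,\ell}(f,[f^{-1}(\alpha_k),1])$, attaining its maximum at $z=f^{-1}(\alpha_k)$. Hence the recursion $\alpha_{k+1}=\alpha_k/\hat{r}_{m,\ell}(f^{-1}(\alpha_k),\alpha_k,f)$ gives $\alpha_{k+1}=(1-E)/(1+E)$ and therefore $c_{k+1}=1-E$, which rescales the range to $[1-E,1+E]$. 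This immediately yields $\varepsilon_{k+1}=E$ together with the identity $\alpha_{k+1}=(1-\varepsilon_{k+1})/(1+\varepsilon_{k+1})$.

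For part~(\ref{equi}), the induction hypothesis provides $N_k:=(m+\ell+1)^k+1$ equioscillation points of $\widetilde{f}_k/f-1$ on $[f^{-1}(\alpha),1]$, with extrema at both endpoints. These points partition $[f^{-1}(\alpha),1]$ into $N_k-1$ maximal monotonicity subintervals of $\widetilde{f}_k/f$, and on each such subinterval $w$ is monotone and maps bijectively onto $[f^{-1}(\alpha_k),1]$. By~(\ref{assumption1}), the function $c_{k+1}\hat{r}_{m,\ell}(\cdot,\alpha_k,f)/f-1$ has $m+\ell+2$ equioscillation points in $[f^{-1}(\alpha_k),1]$, including both endpoints, and these pull back to $m+\ell+2$ equioscillation points of $\widetilde{f}_{k+1}/f-1$ on each monotonicity subinterval. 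Adjacent subintervals share one junction, at which $w$ equals either $f^{-1}(\alpha_k)$ or $1$ from both sides; because the value of the $w$-equioscillation at each of these endpoints is single-valued, the sign-alternation is preserved across junctions. Counting with this sharing gives $(N_k-1)(m+\ell+2)-(N_k-2) = (m+\ell+1)^{k+1}+1$, with extrema at $z=f^{-1}(\alpha)$ and $z=1$ persisting by construction.

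For part~(\ref{convergence}), the monotone decrease $\varepsilon_{k+1}<\varepsilon_k$ follows from $\mathcal{R}_{0,0}\subset\mathcal{R}_{m,\ell}$ together with the fact that the best constant approximation to $f$ on $[f^{-1}(\alpha_k),1]$ has relative error exactly $\varepsilon_k$, while~(\ref{assumption1}) forbids the optimal type-$(m,\ell)$ approximant from being constant when $(m,\ell)\neq(0,0)$. The rate comes from the short-interval asymptotic estimate $E_{m,\ell}(f,[a,1]) = O((1-a)^{m+\ell+1})$ for $f\in C^{m+\ell+1}$, obtained by comparing to the Pad\'e-type rational Hermite interpolant of $f$ at $z=1$, combined with the Lipschitz bound $1-f^{-1}(\alpha_k)\le L(1-\alpha_k) = 2L\varepsilon_k/(1+\varepsilon_k)$, which together yield $\varepsilon_{k+1}=O(\varepsilon_k^{m+\ell+1})$. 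The main obstacle, flagged in the introduction as the central technical ingredient of the paper, is making the Pad\'e-type bound rigorous for merely $C^{m+\ell+1}$ (rather than analytic) $f$, which requires controlling possible denominator degeneracies on sufficiently short intervals and carrying a Taylor remainder through a rational expression.
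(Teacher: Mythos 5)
Your proof follows essentially the same route as the paper's: an induction establishing~(\ref{claim1}) and~(\ref{equi}) together via a composition-and-counting argument (the paper packages this as Lemma~\ref{lemma:composition}), the computation $\alpha_{k+1}=(1-E)/(1+E)$ with $E=E_{m,\ell}(f,[f^{-1}(\alpha_k),1])$ (the paper's Lemma~\ref{lemma:alphaupdate2}), a comparison-with-constants argument for monotone decrease, and a short-interval asymptotic for the rate. Two points merit comment.

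In part~(\ref{equi}) you assert that the alternation points of $\widetilde{f}_k/f-1$ partition $[f^{-1}(\alpha),1]$ into maximal monotonicity subintervals on which $w=f^{-1}(f/f_k)$ is a monotone bijection onto $[f^{-1}(\alpha_k),1]$. Monotonicity of $w$ between consecutive alternation points is not automatic and in fact need not be proved: since $w(z)=f^{-1}(\alpha_k)$ precisely when $\widetilde{f}_k(z)/f(z)=1+\varepsilon_k$ and $w(z)=1$ precisely when $\widetilde{f}_k(z)/f(z)=1-\varepsilon_k$, a return of $w$ to its starting endpoint in the interior of such a subinterval would create an extra alternation of $\widetilde{f}_k/f-1$, contradicting consecutiveness. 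Thus $w$ traverses $[f^{-1}(\alpha_k),1]$ from one end to the other, possibly nonmonotonically, and your alternation count still holds as a lower bound, which is what the theorem asserts. This mirrors the informal ``runs from/to'' phrasing in the paper's Lemma~\ref{lemma:composition}.

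The real gap is the one you flag in part~(\ref{convergence}), and it has two pieces. Monotone decrease alone does not force $\varepsilon_k\rightarrow 0$; the paper's Lemma~\ref{lemma:Gproperties} additionally shows $G$ is continuous on $(0,1)$ (using continuous dependence of minimax approximants on the function when the defect is zero, which is ensured by~(\ref{assumption1})), ruling out a positive fixed point. For the rate, the short-interval estimate $E_{m,\ell}(f,[a,1])=O((1-a)^{m+\ell+1})$ is made rigorous in Lemma~\ref{lemma:errshape} by a concrete device: after centering, one takes the Pad\'e approximant not of $f$ itself but of $f$ minus a Chebyshev-polynomial correction $c_g\bigl(z^{m+\ell+1}-2(\delta/2)^{m+\ell+1}T_{m+\ell+1}(z/\delta)\bigr)$, verifies that its relative error nearly equioscillates, and then invokes the de la Vall\'ee Poussin lower bound to pin the minimax error to $2|c_g|(\delta/2)^{m+\ell+1}/g(0)+o(\delta^{m+\ell+1})$. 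This works for $C^{m+\ell+1}$ data under a defect-zero Pad\'e assumption and yields the sharp constant used in Theorem~\ref{thm:pthroot}. Your sketch identifies the right ingredients (Pad\'e comparison plus Lipschitz control of $f^{-1}$) but stops short of the Chebyshev-perturbation construction that makes the bound rigorous.
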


Let us discuss the meaning of this theorem.  It states that the iteration~(\ref{genit1}-\ref{genit2}) generates a function $\widetilde{f}_k(z) \approx f(z)$ with the following curious property: The maximum relative error in $\widetilde{f}_k(z)$ on the interval $[f^{-1}(\alpha),1]$ is equal to the maximum relative error in the best rational approximant of $f(z)$ \emph{on a much smaller interval} $[f^{-1}(\alpha_{k-1}),1]$.  Indeed, as $k$ increases, the length of $[f^{-1}(\alpha),1]$ remains constant, whereas the length of $[f^{-1}(\alpha_{k-1}),1]=[f^{-1}(\alpha_{k-1}),f^{-1}(1)]$ is $O(1-\alpha_{k-1}) = O(\varepsilon_{k-1})$ by~(\ref{alphaeps}), assuming $f^{-1}$ is Lipschitz near $z=1$.  Since rational functions of type $(m,\ell)$ can approximate smooth functions on intervals of length $O(\varepsilon_{k-1})$ with accuracy $O(\varepsilon_{k-1}^{m+\ell+1})$, we see from~(\ref{err}) that $\varepsilon_k = O(\varepsilon_{k-1}^{m+\ell+1})$, assuming $f$ is smooth enough near $z=1$. That is, $\varepsilon_k \rightarrow 0$ with order of convergence $m+\ell+1$.

For most functions $f$, the iteration~(\ref{genit1}-\ref{genit2}) is not useful, as it (rather circularly) uses $f$ (and $f^{-1}$) to generate an approximation of $f$.  Furthermore, the approximation it generates need not be a rational function of $z$.  The function $f(z)=z^{1/p}$, however, is exceptional, in that the iteration~(\ref{genit1}-\ref{genit2}) -- which reduces to~(\ref{pthrootit1}-\ref{pthrootit2}) for this $f$ -- generates a rational function $f_k(z)$ without requiring the evaluation of any $p^{th}$ roots.

The following theorem specializes Theorem~\ref{thm:fit} to the case $f(z)=z^{1/p}$ and gives precise information about the constants implicit in the convergence result~(\ref{convergence}).  In it, we use the notation $(\beta)_m$ for the rising factorial (the Pochhammer symbol): $(\beta)_m = \beta(\beta+1)(\beta+2)\cdots(\beta+m-1)$.

\begin{theorem} \label{thm:pthroot}
Let $\alpha \in (0,1)$, $m,\ell \in \mathbb{N}_0$, and $p \in \mathbb{N}$ with $p>2$ and $(m,\ell) \neq 0$.  Let $f_k(z)$ and $\alpha_k$ be defined by the iteration~(\ref{pthrootit1}-\ref{pthrootit2}), and let $\widetilde{f}_k(z) = \frac{2\alpha_k}{1+\alpha_k} f_k(z)$ and $\varepsilon_k = \max_{z \in [\alpha^p,1]} \left| \frac{\widetilde{f}_k(z) - z^{1/p}}{z^{1/p}} \right|$.  Then the conclusions~(\ref{claim1}) and~(\ref{equi}) hold with $f(z) = z^{1/p}$.  Furthermore, as $k \rightarrow \infty$, $\varepsilon_k \rightarrow 0$ monotonically with
\begin{equation} \label{epsestimatep}
\varepsilon_{k+1}  = C(m,\ell,p) \varepsilon_k^{m+\ell+1} + o(\varepsilon_k^{m+\ell+1}),
\end{equation}
where
\begin{equation} \label{constp}
C(m,\ell,p) = \frac{p^{m+\ell+1} m! \ell! (1/p)_{\ell+1} (1-1/p)_m }{ 2^{m+\ell} (m+\ell+1)!(m+\ell)! }.
\end{equation}
\end{theorem}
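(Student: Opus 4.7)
The plan is to apply Theorem~\ref{thm:fit} with $f(z)=z^{1/p}$ and then extract the leading constant in the convergence rate from a small-interval analysis of $E_{m,\ell}(z^{1/p},[a,1])$ as $a\to 1^-$. Since Lemma~\ref{lemma:details} shows that $z^{1/p}$ satisfies hypothesis~(\ref{assumption1}), Theorem~\ref{thm:fit} immediately yields conclusions~(\ref{claim1}) and~(\ref{equi}) as well as monotone convergence $\varepsilon_k\to 0$ with order $m+\ell+1$. Only the explicit constant in~(\ref{epsestimatep}) remains.

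From~(\ref{err}) and~(\ref{alphaeps}),
\begin{equation*}
\varepsilon_{k+1}=E_{m,\ell}\bigl(z^{1/p},[\alpha_k^p,1]\bigr),\qquad 1-\alpha_k=\frac{2\varepsilon_k}{1+\varepsilon_k},
\end{equation*}
so Taylor expansion gives $1-\alpha_k^p=2p\varepsilon_k+O(\varepsilon_k^2)$, and the task reduces to finding the leading behavior of $E_{m,\ell}(z^{1/p},[a,1])$ as $a\to 1^-$.

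The key analytic tool is the classical small-interval asymptotic: if $g$ is smooth near $c$ and its $(m,\ell)$-type Pad\'e approximant $R_{m,\ell}(\cdot;c)$ satisfies $g(x)-R_{m,\ell}(x;c)=A(x-c)^{m+\ell+1}+O((x-c)^{m+\ell+2})$, then
\begin{equation*}
\min_{r\in\mathcal{R}_{m,\ell}}\max_{x\in[c-h,c+h]}|g(x)-r(x)|=\frac{|A|}{2^{m+\ell}}h^{m+\ell+1}(1+o(1)),\qquad h\to 0,
\end{equation*}
with the factor $1/2^{m+\ell}$ arising because the monic polynomial of degree $m+\ell+1$ of minimum sup norm on $[c-h,c+h]$ is $h^{m+\ell+1}T_{m+\ell+1}((x-c)/h)/2^{m+\ell}$. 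I would either cite this (if presented as a standalone lemma earlier in the paper) or prove it by a scaling argument combined with the fact that the best rational approximant converges to the Pad\'e approximant as $h\to 0$. Applying it with $g(z)=z^{1/p}$, $c=(1+a)/2$, and $h=(1-a)/2$, and using that $|z^{1/p}|\to 1$ uniformly on $[a,1]$ to pass from absolute to relative error, one obtains
\begin{equation*}
E_{m,\ell}(z^{1/p},[a,1])=\frac{|A_{m,\ell}|}{2^{m+\ell}}\left(\frac{1-a}{2}\right)^{m+\ell+1}(1+o(1)),\qquad a\to 1^-,
\end{equation*}
where $A_{m,\ell}$ is the Pad\'e error coefficient of $(1+s)^{1/p}$ at $s=0$.

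Finally, $A_{m,\ell}$ can be read off from the classical hypergeometric formula for the $(m,\ell)$-type Pad\'e approximant of $(1+s)^{\alpha}$ (or verified directly by matching the first $m+\ell+1$ Taylor coefficients), yielding
\begin{equation*}
|A_{m,\ell}|=\frac{m!\,\ell!\,(1/p)_{\ell+1}\,(1-1/p)_m}{(m+\ell)!\,(m+\ell+1)!}.
\end{equation*}
Combining the three displays above with $1-\alpha_k^p=2p\varepsilon_k+O(\varepsilon_k^2)$ reproduces exactly~(\ref{epsestimatep}) with the stated constant~(\ref{constp}). The main obstacle is the generic small-interval minimax asymptotic for rational approximants: its leading-order $h^{m+\ell+1}$ scaling is classical, but extracting the precise constant $|A|/2^{m+\ell}$ and controlling the $o(1)$ term---especially given that the expansion center $c=(1+a)/2$ itself drifts as $a\to 1$---requires a careful convergence-to-Pad\'e argument, which I expect to be the principal technical ingredient of Section~\ref{sec:proofs}.
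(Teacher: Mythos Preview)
Your proposal is correct and follows essentially the same route as the paper: invoke Lemma~\ref{lemma:details} to verify hypothesis~(\ref{assumption1}), apply Theorem~\ref{thm:fit} for~(\ref{claim1})--(\ref{equi}) and monotone convergence, and then extract the constant via a small-interval minimax asymptotic together with the Pad\'e error coefficient for $(1+s)^{1/p}$. The paper's Lemma~\ref{lemma:errshape} is exactly the standalone result you anticipate, and its proof---Pad\'e-approximate the function with the leading error term subtracted off and replaced by a scaled Chebyshev polynomial, then invoke de la Vall\'ee Poussin for the matching lower bound---realizes precisely the ``convergence-to-Pad\'e plus Chebyshev'' mechanism you sketch; the drifting-center issue you flag is handled in the paper by the continuity of the Pad\'e map under small perturbations of the expansion point.
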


Note that when $p=2$ and $\ell \in \{m-1,m\}$,~(\ref{constp}) simplifies to $C(m,\ell,2) = 4^{-(m+\ell)}$.  This is consistent with the results of~\cite{gawlik2018zolotarev}, where it is shown that for these $m$, $\ell$, and $p$, an asymptotically sharp bound of the form $\varepsilon_k \le 4 \rho^{-(m+\ell+1)^k}$ holds with $\rho$ a constant depending on $\alpha$.

\subsection{Convergence of the matrix iteration}

An immediate consequence of Theorem~\ref{thm:pthroot} is that the iteration~(\ref{pthrootmit1}-\ref{pthrootmit2}) converges when $A$ is Hermitian positive definite with eigenvalues in $[\alpha^p,1]$.

\begin{corollary} \label{cor:spd}
Let $\alpha \in (0,1)$, $m,\ell \in \mathbb{N}_0$, and $p,n \in \mathbb{N}$ with $p \ge 2$ and $(m,\ell) \neq (0,0)$.  Let $A \in \mathbb{C}^{n \times n}$ be Hermitian positive definite.  If the eigenvalues of $A$ lie in $[\alpha^p,1]$, then the iteration~(\ref{pthrootmit1}-\ref{pthrootmit2}) generates a sequence $\widetilde{X}_k = 2\alpha_k X_k/(1+\alpha_k)$ that converges to $A^{1/p}$ with order $m+\ell+1$.  In particular, we have
\[
\|\widetilde{X}_k A^{-1/p} - I\|_2 \le \varepsilon_k,
\]
for every $k \ge 0$, where $\varepsilon_k$ obeys the recursion
\begin{equation} \label{epsitp}
\varepsilon_{k+1} = E_{m,\ell}\left(\mysqrt[p]{\cdot}, \left[\left(\frac{1-\varepsilon_k}{1+\varepsilon_k}\right)^p,1\right]\right) = C(m,\ell,p)\varepsilon_k^{m+\ell+1} + o(\varepsilon_k^{m+\ell+1}), \quad \varepsilon_0 = \frac{1-\alpha}{1+\alpha},
\end{equation}
and $C(m,\ell,p)$ is given by~(\ref{constp}).
\end{corollary}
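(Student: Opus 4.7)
The plan is to reduce the matrix statement to the scalar statement of Theorem~\ref{thm:pthroot} (and, for $p=2$, Theorem~\ref{thm:fit} with $f(z)=z^{1/2}$) via the spectral theorem. The only genuinely matrix-theoretic content is the observation that, for Hermitian positive definite $A$, the matrix iteration is nothing but the scalar functional iteration applied eigenvalue-by-eigenvalue; everything else was already done in the preceding theorems.

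First, I would establish by induction on $k$ that $X_k = f_k(A)$, where $f_k$ is produced by~(\ref{pthrootit1}-\ref{pthrootit2}) with the same $\alpha, m, \ell, p$. The base case $X_0 = I = f_0(A)$ is immediate. For the inductive step, I would note that $A$ and $X_k$ commute (both are polynomials in, or limits of polynomials in, $A$), so the update rule $X_{k+1} = X_k \hat{r}_{m,\ell}(X_k^{-p}A, \alpha_k, \sqrt[p]{\cdot})$ is just the functional calculus applied to $f_{k+1}(z) = f_k(z)\hat{r}_{m,\ell}(z/f_k(z)^p, \alpha_k, \sqrt[p]{\cdot})$, provided $f_k$ is strictly positive on the spectrum of $A$. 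Positivity of $f_k$ on $[\alpha^p,1]$ follows by induction, since $\hat{r}_{m,\ell}(\cdot,\alpha_k,\sqrt[p]{\cdot})$ is positive on $[\alpha_k^p,1]$ (its relative error lies in $[0, 2\varepsilon_{k+1}/(1-\varepsilon_{k+1})]$ by the normalization~(\ref{rhat})) and the scalar iteration maps $[\alpha^p,1]$ into a neighborhood of $[\alpha_k^p,1]$; this in particular guarantees invertibility of $X_k$ so that $X_k^{-p}A$ is well-defined at each step.

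Next, writing $A = U \Lambda U^*$ with $\Lambda = \operatorname{diag}(\lambda_1,\dots,\lambda_n)$ and $\lambda_i \in [\alpha^p,1]$, I get
\[
\widetilde{X}_k A^{-1/p} - I = U \operatorname{diag}\!\left( \frac{\widetilde{f}_k(\lambda_i) - \lambda_i^{1/p}}{\lambda_i^{1/p}} \right) U^*,
\]
so, by unitary invariance of the spectral norm,
\[
\|\widetilde{X}_k A^{-1/p} - I\|_2 = \max_{i} \left| \frac{\widetilde{f}_k(\lambda_i) - \lambda_i^{1/p}}{\lambda_i^{1/p}} \right| \le \max_{z \in [\alpha^p,1]} \left| \frac{\widetilde{f}_k(z) - z^{1/p}}{z^{1/p}} \right| = \varepsilon_k.
\]
This is the pointwise error bound claimed in the corollary.

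Finally, for the recursion and asymptotic rate, I invoke the scalar results. For $p > 2$, Theorem~\ref{thm:pthroot} directly gives the two equalities in~(\ref{epsitp}): the middle equality is the combination of~(\ref{err}) with $f(z) = z^{1/p}$ and the identification of $\alpha_k$ via~(\ref{alphaeps}), while the right-hand asymptotic equality is~(\ref{epsestimatep}). Monotone convergence to $0$ at order $m+\ell+1$ is part~(\ref{convergence}) of Theorem~\ref{thm:fit}, whose hypotheses ($z^{1/p} \in C^\infty$, Lipschitz inverse on $[\alpha,1]$, $(m,\ell)\neq(0,0)$) are clearly satisfied. For $p=2$, Theorem~\ref{thm:fit} applied with $f(z)=z^{1/2}$ (known to satisfy~(\ref{assumption1}) per Lemma~\ref{lemma:details}) supplies the middle equality and monotone order-$(m+\ell+1)$ convergence, and the asymptotic constant $C(m,\ell,2)$ comes from the same expansion that underlies Theorem~\ref{thm:pthroot} (or, in the Zolotarev sub-case $\ell\in\{m-1,m\}$, from the closed-form bound mentioned in Section~\ref{sec:background}, which simplifies~(\ref{constp}) to $4^{-(m+\ell)}$). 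The initial value $\varepsilon_0 = (1-\alpha)/(1+\alpha)$ is~(\ref{alphaeps}) at $k=0$.

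There is no real obstacle here: the corollary is a straightforward packaging of the scalar theorems via functional calculus. The one point worth flagging is the need to confirm $f_k > 0$ on $[\alpha^p,1]$ so that $f_k(A)$ is invertible and the inductive identification $X_k = f_k(A)$ is legitimate; this is a small but essential sanity check that I would make explicit rather than gloss over.
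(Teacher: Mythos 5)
Your proof is correct and follows essentially the same route as the paper's: identify $X_k = f_k(A)$, write $\widetilde{X}_k A^{-1/p} - I = e_k(A)$ with $e_k(z) = (\widetilde{f}_k(z) - z^{1/p})/z^{1/p}$, bound $\|e_k(A)\|_2$ by the supremum of $|e_k|$ over $[\alpha^p,1]$ via the spectral theorem, and then import the recursion and asymptotics from the scalar theorems. One small inaccuracy worth noting: you say the scalar iteration maps $[\alpha^p,1]$ into ``a neighborhood of $[\alpha_k^p,1]$,'' but in fact the equioscillation bound $|\,\widetilde{f}_k(z)/z^{1/p} - 1\,| \le \varepsilon_k$ together with $\frac{1+\alpha_k}{2\alpha_k} = \frac{1}{1-\varepsilon_k}$ gives $f_k(z)/z^{1/p} \in [1, 1/\alpha_k]$ exactly, hence $z/f_k(z)^p \in [\alpha_k^p,1]$ precisely — which is cleaner and is what actually makes the induction close. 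Your explicit attention to the positivity of $f_k$ (and hence invertibility of $X_k$) is a good instinct; the paper takes the identification $X_k = f_k(A)$ for granted.
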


A similar result holds for the coupled iteration~(\ref{pcoupled1}-\ref{pcoupled3}).

\begin{corollary} \label{cor:spdcoupled}
Let $\alpha,m,\ell,p,n$, and $A$ be as in Corollary~\ref{cor:spd}.  Then the coupled iteration~(\ref{pcoupled1}-\ref{pcoupled3}) generates sequences $\widetilde{Y}_k = (1+\alpha_k)^{p-1} Y_k/(2\alpha_k)^{p-1}$ and $\widetilde{Z}_k = (1+\alpha_k)Z_k/(2\alpha_k)$ that converge to $A^{1/p}$ and $A^{-1/p}$ respectively, with order $m+\ell+1$.  In particular, we have
\begin{align*}
\|\widetilde{Y}_k A^{-1/p} - I\|_2 &\le \frac{(1+\varepsilon_k)^{p-1}-1}{(1-\varepsilon_k)^{p-1}}, \\
\|\widetilde{Z}_k A^{1/p} - I\|_2 &\le \frac{\varepsilon_k}{1-\varepsilon_k},
\end{align*}
for every $k \ge 0$, where $\varepsilon_k$ obeys the recursion~(\ref{epsitp}).
\end{corollary}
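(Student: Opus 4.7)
The plan is to reduce the matrix-level claims to scalar estimates at each eigenvalue of $A$, and then to quote the bound already proved in Corollary~\ref{cor:spd}. First I would rewrite the scaled iterates in closed form: the paper observes after~(\ref{pthrootit1}-\ref{pthrootit2}) that $Y_k = f_k(A)^{1-p}A$ and $Z_k = f_k(A)^{-1}$, so using $\widetilde{f}_k(z) = \tfrac{2\alpha_k}{1+\alpha_k}f_k(z)$ yields the clean identities $\widetilde{Z}_k = \widetilde{f}_k(A)^{-1}$ and $\widetilde{Y}_k = \widetilde{f}_k(A)^{1-p}A$. Consequently, both $\widetilde{Z}_k A^{1/p} - I$ and $\widetilde{Y}_k A^{-1/p} - I$ are matrix functions of $A$ alone.

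Since $A$ is Hermitian positive definite with spectrum in $[\alpha^p,1]$, unitary diagonalization reduces each 2-norm to a maximum over eigenvalues. For any eigenvalue $\lambda \in [\alpha^p,1]$, Corollary~\ref{cor:spd} guarantees that $t := \widetilde{f}_k(\lambda)/\lambda^{1/p} \in [1-\varepsilon_k, 1+\varepsilon_k]$; the monotone decrease of $\varepsilon_k$ from $\varepsilon_0 = (1-\alpha)/(1+\alpha) < 1$ ensures $t > 0$. The $\widetilde{Z}_k$ bound is then immediate from $|t^{-1}-1| = |1-t|/t \le \varepsilon_k/(1-\varepsilon_k)$. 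For $\widetilde{Y}_k$, the eigenvalue-wise error is $|t^{-(p-1)}-1| = |1-t^{p-1}|/t^{p-1}$, which I would bound by dividing the larger of the two endpoint deviations $(1+\varepsilon_k)^{p-1}-1$ and $1-(1-\varepsilon_k)^{p-1}$ by the minimum value $(1-\varepsilon_k)^{p-1}$ of $t^{p-1}$.

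The only substantive point is the elementary inequality $(1+\varepsilon_k)^{p-1}-1 \ge 1-(1-\varepsilon_k)^{p-1}$ used to identify the larger endpoint deviation; this follows by observing that $g(x) := (1+x)^{p-1}+(1-x)^{p-1}-2$ satisfies $g(0)=0$ and $g'(x) = (p-1)\bigl((1+x)^{p-2}-(1-x)^{p-2}\bigr) \ge 0$ for $p \ge 2$ and $x \in [0,1)$. Convergence with order $m+\ell+1$ is then automatic, since the two bounds are asymptotically $(p-1)\varepsilon_k$ and $\varepsilon_k$ as $\varepsilon_k \to 0$, and $\varepsilon_k$ converges to zero with order $m+\ell+1$ by Corollary~\ref{cor:spd}. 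The main (and essentially only) obstacle is this elementary inequality; everything else is diagonalization and algebra.
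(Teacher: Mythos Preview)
Your proposal is correct and follows essentially the same route as the paper: both express $\widetilde{Y}_k A^{-1/p}-I$ and $\widetilde{Z}_k A^{1/p}-I$ as functions of $e_k(A)=\widetilde{f}_k(A)A^{-1/p}-I$ and then bound numerator and denominator separately using $|e_k|\le\varepsilon_k$. The only cosmetic difference is that the paper bounds $\|I-(I+e_k(A))^{p-1}\|_2$ via the binomial expansion $\sum_{j=1}^{p-1}\binom{p-1}{j}\varepsilon_k^j=(1+\varepsilon_k)^{p-1}-1$, which sidesteps the convexity inequality you prove; either device yields the same bound.
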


Note that the bounds above imply corresponding bounds on the relative errors $\|\widetilde{X}_k - A^{1/p}\|_2 / \|A^{1/p}\|_2$, $\|\widetilde{Y}_k - A^{1/p}\|_2 / \|A^{1/p}\|_2$, and $\|\widetilde{Z}_k - A^{-1/p}\|_2 / \|A^{-1/p}\|_2$.  For instance,
\[
\frac{\|\widetilde{X}_k - A^{1/p}\|_2}{\|A^{1/p}\|_2} = \frac{\|(\widetilde{X}_k A^{-1/p} - I) A^{1/p}\|_2}{\|A^{1/p}\|_2} \le \|\widetilde{X}_k A^{-1/p} - I\|_2 \le \varepsilon_k.
\]

When $A$ is non-normal and/or has eigenvalues away from the positive real axis, the behavior of the matrix iteration~(\ref{pthrootmit1}-\ref{pthrootmit2}) (and hence~(\ref{pcoupled1}-\ref{pcoupled3}))  is dictated by the behavior of the scalar iteration~(\ref{pthrootit1}-\ref{pthrootit2}) on complex inputs $z$.  This has been analyzed in detail for the case $p=2$ in~\cite{gawlik2018backward}, but for $p>2$, numerical experiments indicate that the scalar iteration converges in a subset of the complex plane with fractal structure, 
a typical feature of iterations for the $p^{th}$ root.  We study this behavior numerically in Section~\ref{sec:numerical}.  It remains an open problem to determine theoretically the convergence region $\{z \in \mathbb{C} \mid \lim_{k \rightarrow \infty} f_k(z) = z^{1/p}\}$ for the iteration~(\ref{pthrootit1}-\ref{pthrootit2}).

\subsection{Special cases} \label{sec:specialcases}

For certain values of $m$, $\ell$, and $p$, the theory above recovers some known results from the literature.  We discuss these situations below.

\subsubsection{Square roots}
When $p=2$, $m \in \mathbb{N}$, and $\ell \in \{m-1,m\}$, a remarkable phenomenon occurs, allowing us to draw the connection between Theorem~\ref{thm:fit} and the results of~\cite{gawlik2018zolotarev} that we alluded to earlier.  For these $p$, $m$, and $\ell$, the function $\widetilde{f}_k(z)$ is a rational function of type $(m_k,\ell_k)$, where $(m_k,\ell_k)$ is given by~(\ref{mklk}).
In both the case $\ell=m-1$ and the case $\ell=m$, we have
\[
m_k+\ell_k = (m+\ell+1)^k-1,
\]
so~(\ref{equi}) implies that $\frac{\widetilde{f}_k(z) - f(z)}{f(z)}$ equioscillates $m_k+\ell_k+2$ times on $[f^{-1}(\alpha),1]$.  It follows from the theory of rational minimax approximation that $\widetilde{f}_k(z)$ is the best rational approximant of $\sqrt{z}$ of type $(m_k,\ell_k)$ on $[\alpha^2,1]$: 
\[
\widetilde{f}_k(z) = r_{m_k,\ell_k}(z,\alpha,\sqrt{\cdot}), \text{ if } p=2 \text{ and } \ell \in \{m-1,m\}.
\]
In particular,
\[
\varepsilon_k  = E_{m,\ell}(\sqrt{\cdot},[\alpha_k^2,1]) = E_{m_k,\ell_k}(\sqrt{\cdot},[\alpha^2,1]), \text{ if } \ell \in \{m-1,m\},
\]
for every $k \ge 1$.  This shows that Theorem~\ref{thm:fit} includes~\cite[Theorem 1]{gawlik2018zolotarev} as a special case.

\subsubsection{Low-order iterations}

When $p \ge 2$ is an integer and $(m,\ell) = (1,0)$ or $(0,1)$, we recover variants of another family of iterations.

\begin{proposition} \label{prop:loworder}
Let $p \ge 2$ be an integer and $\alpha \in (0,1)$.  We have
\begin{equation} \label{rhat10}
\hat{r}_{1,0}(z,\alpha,\sqrt[p]{\cdot}) = \frac{1}{p}\left((p-1)\mu + \frac{z}{\mu^{p-1}} \right), \quad \mu = \left( \frac{\alpha - \alpha^p}{(p-1)(1-\alpha)} \right)^{1/p}.
\end{equation}
and
\begin{equation} \label{rhat01}
\hat{r}_{0,1}(z,\alpha,\sqrt[p]{\cdot}) = \frac{p}{(p+1)\nu-\nu^{p+1}z}, \quad \nu = \left(\frac{(p+1)(1-\alpha)}{1-\alpha^{p+1}}\right)^{1/p}.
\end{equation}
\end{proposition}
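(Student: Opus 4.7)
The strategy is to identify $\hat{r}_{m,\ell}(z,\alpha,\mysqrt[p]{\cdot})$ via the equioscillation theorem and then exhibit explicit functions of the claimed form that realize this characterization. By Lemma~\ref{lemma:details}, $f(z) = z^{1/p}$ satisfies~(\ref{assumption1}), so for $(m,\ell) \in \{(1,0),(0,1)\}$ the relative error $(r_{m,\ell}-f)/f$ equioscillates exactly $m+\ell+2=3$ times on $[\alpha^p,1]$, with maxima at both endpoints and a single interior minimum. The rescaling defining $\hat{r}_{m,\ell}$ in~(\ref{rhat}) (multiplication by $1/(1-\varepsilon)$, where $\varepsilon$ is the minimax relative error) turns these three extrema into a relative error that is nonnegative on $[\alpha^p,1]$, vanishes at a single interior point, and takes equal values at $z=\alpha^p$ and $z=1$. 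Since $r_{m,\ell}$ is unique, so is $\hat{r}_{m,\ell}$; thus any type-$(m,\ell)$ function whose relative error has this shape must be the one sought.

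\textbf{Case $(m,\ell) = (1,0)$.} A type-$(1,0)$ function is affine. If an affine function $\hat{r}$ lies $\ge z^{1/p}$ on $[\alpha^p,1]$ with equality at an interior point $z_0 = \mu^p$, then $z_0$ is a local minimum of $\hat{r}(z) - z^{1/p}$, so the derivatives match there and $\hat{r}$ is the tangent line to $z^{1/p}$ at $z_0$. Strict concavity of $z^{1/p}$ (for $p\ge 2$) guarantees this tangent lies strictly above $z^{1/p}$ elsewhere, producing exactly the required shape. Direct evaluation of the tangent at $\mu^p$ gives $\hat{r}(z) = \frac{1}{p}\bigl((p-1)\mu + z/\mu^{p-1}\bigr)$. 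The remaining unknown $\mu$ is pinned down by the endpoint equi-error condition $\hat{r}(\alpha^p)/\alpha = \hat{r}(1)$, which reduces to $(p-1)\mu^p(1-\alpha) = \alpha - \alpha^p$, yielding~(\ref{rhat10}).

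\textbf{Case $(m,\ell) = (0,1)$.} Write $\hat{r}(z) = 1/s(z)$ with $s(z) = c - dz$ linear and positive on $[\alpha^p,1]$. The shape conditions on the relative error translate to $q(z) := z^{1/p}s(z) \le 1$ on $[\alpha^p,1]$, with equality at a single interior point, plus the endpoint equi-error condition $q(\alpha^p) = q(1)$. A short derivative calculation shows $q$ has a unique interior maximum at $z^* = c/(d(p+1))$; the tangency $q(z^*) = 1$ gives $c^{p+1}/d = (p+1)^{p+1}/p^p$, and $q(\alpha^p) = q(1)$ gives $c/d = (1-\alpha^{p+1})/(1-\alpha) = (p+1)/\nu^p$. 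Solving this system yields $c = (p+1)\nu/p$ and $d = \nu^{p+1}/p$, producing~(\ref{rhat01}). The mildly subtle points to verify are that $s$ is indeed positive on $[\alpha^p,1]$ and that the resulting rational function has exact type $(0,1)$; both follow from $\nu^p < p+1$, which in turn holds since $1-\alpha^{p+1} = (1-\alpha)(1 + \alpha + \cdots + \alpha^p) > 1-\alpha$ for $\alpha \in (0,1)$. The main conceptual obstacle across both cases is reducing the equioscillation characterization of $\hat{r}_{m,\ell}$ to a tractable system of equations; after that, the algebra is routine.
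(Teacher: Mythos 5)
Your proof is correct and follows essentially the same approach as the paper: both rely on the observation that, by uniqueness of the minimax approximant and the equioscillation pattern from Lemma~\ref{lemma:details}, it suffices to exhibit a type-$(m,\ell)$ function whose relative error is nonnegative, vanishes once in the interior, and takes equal (maximal) values at the two endpoints. The only stylistic difference is direction of argument — the paper states the formulas and then verifies the shape of $\hat{e}(z)$ (critical point, zero value, endpoint equality, monotonicity), while you derive the formulas from tangency and the endpoint condition; the underlying logic is identical.
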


Note that the formula~(\ref{rhat10}) for $\hat{r}_{1,0}(z,\alpha,\sqrt[p]{\cdot})$ appears in~\cite[Theorem 2]{meinardus1980optimal} and~\cite{king1971improved}; see also~\cite[Lemma 3.2]{guo2006schur} for a related result.

The preceding proposition shows that when $(m,\ell)=(1,0)$, the iteration~(\ref{pthrootmit1}-\ref{pthrootmit2}) reads
\begin{align*}
X_{k+1} &= \frac{1}{p}\left( (p-1)\mu_k X_k + (\mu_k X_k)^{1-p} A \right), & X_0 &= I, \\
\alpha_{k+1} &= \frac{p \alpha_k}{(p-1)\mu_k + \mu_k^{1-p}\alpha_k^p}, & \alpha_0 &= \alpha,
\end{align*}
where
\begin{equation} \label{muk}
\mu_k = \left( \frac{\alpha_k - \alpha_k^p}{(p-1)(1-\alpha_k)} \right)^{1/p}.
\end{equation}
This is a scaled variant of the popular Newton iteration~\cite[Equation 7.5]{higham2008functions} for the matrix $p^{th}$ root.  The scaling heuristic above is reminiscent of one proposed by Hoskins and Walton~\cite{hoskins1979faster}, but theirs is based on type-$(1,0)$ rational minimax approximants of $z^{(p-1)/p}$.

On the other hand, when $(m,\ell)=(0,1)$, the iteration~(\ref{pthrootmit1}-\ref{pthrootmit2}) reads
\begin{align*}
X_{k+1} &= p X_k \left( (p+1)\nu_k I - \nu_k^{p+1} X_k^{-p} A) \right)^{-1}, & X_0 &= I, \\
\alpha_{k+1} &= \frac{1}{p} \alpha_k \left((p+1)\nu_k - \nu_k^{p+1}\alpha_k^p\right), & \alpha_0 &= \alpha,
\end{align*}
where
\begin{equation} \label{nuk}
\nu_k = \left( \frac{(p+1)(1 - \alpha_k)}{1-\alpha_k^{p+1}} \right)^{1/p}.
\end{equation}
In terms of the matrix $Z_k = X_k^{-1}$, the iteration for $X_k$ becomes
\begin{align*}
Z_{k+1} &= \frac{1}{p}\left( (p+1)\nu_k Z_k - (\nu_k Z_k)^{p+1} A \right), & Z_0 &= I,  
\end{align*}
which is a scaled variant of the inverse Newton iteration~\cite[Equation (7.12)]{higham2008functions} for computing $A^{-1/p}$.

\subsubsection{Pad\'e iterations}

We recover one more family of iterations by considering the limit as $\alpha \uparrow 1$ in~(\ref{pthrootmit1}-\ref{pthrootmit2}).  

Below, we say that a family of rational functions $\{r_\alpha \in \mathcal{R}_{m,\ell} \mid \alpha \in (0,1)\}$ converges coefficientwise to $r_1 \in \mathcal{R}_{m,\ell}$ as $\alpha \uparrow 1$ if the coefficients of the polynomials in the numerator and denominator of $r_\alpha$, appropriately normalized, approach those of $r_1$ as $\alpha \uparrow 1$.

\begin{proposition} \label{prop:pade}
As $\alpha \uparrow 1$, $\hat{r}(z,\alpha,\mysqrt[p]{\cdot})$ converges coefficientwise to the type-$(m,\ell)$ Pad\'e approximant $P_{m,\ell,p}(z)$ of $z^{1/p}$ at $z=1$:
\begin{equation} \label{padep}
P_{m,\ell,p}(z) = \sum_{j=0}^m \frac{(-m)_j (-1/p-\ell)_j}{j!(-\ell-m)_j} (1-z)^j \bigg/ \sum_{j=0}^\ell \frac{(1/p)_j (1/p-m)_m (j-\ell-m)_m}{ j! (-\ell-m)_m (j+1/p-m)_m } (1-z)^j.
\end{equation}
\end{proposition}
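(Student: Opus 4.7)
The plan is a compactness-plus-uniqueness argument that shows $r_{m,\ell}(z,\alpha,\sqrt[p]{\cdot})$ converges coefficientwise to $P_{m,\ell,p}(z)$ as $\alpha \uparrow 1$; the scaling from $r$ to $\hat r$ handles itself because $\hat r_{m,\ell} = c_\alpha\, r_{m,\ell}$ with $c_\alpha = 1/(1 - E_{m,\ell}(\sqrt[p]{\cdot},[\alpha^p,1]))$, and the minimax error on the shrinking interval tends to zero, so $c_\alpha \to 1$.

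First I would write $r_{m,\ell}(z,\alpha,\sqrt[p]{\cdot}) = g_\alpha(z)/h_\alpha(z)$ in the basis $(1-z)^j$, with the combined coefficient vector normalized to unit Euclidean norm. Along any sequence $\alpha_n \uparrow 1$, Bolzano--Weierstrass yields a subsequence whose coefficients converge to polynomials $g^\ast,h^\ast$ of degrees at most $m,\ell$ that are not both zero. Equioscillation guaranteed by~(\ref{assumption1}) implies that $\varphi_\alpha(z) := g_\alpha(z) - z^{1/p}h_\alpha(z)$ has at least $m+\ell+1$ distinct zeros in $[\alpha^p,1]$. Applying Rolle's theorem $j$ times shows $\varphi_\alpha^{(j)}$ has a zero in $[\alpha^p,1]$ for each $j = 0,1,\dots,m+\ell$. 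Since $g_\alpha,h_\alpha$ are polynomials of bounded degree, coefficient convergence gives $C^{m+\ell}$ convergence on a neighborhood of $z=1$; passing to the limit forces $(\varphi^\ast)^{(j)}(1)=0$ for $j=0,1,\dots,m+\ell$, that is, $g^\ast(z) - z^{1/p} h^\ast(z) = O((1-z)^{m+\ell+1})$ as $z\to 1$.

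Non-degeneracy is immediate: if $h^\ast \equiv 0$, then $g^\ast(z) = O((1-z)^{m+\ell+1})$ forces $g^\ast \equiv 0$ because $\deg g^\ast \le m < m+\ell+1$, contradicting the normalization; similarly $g^\ast \not\equiv 0$. Hence $g^\ast/h^\ast$ is a type-$(m,\ell)$ rational function satisfying the Padé interpolation condition at $z=1$, and the standard Padé uniqueness argument---if two pairs $(g_i,h_i)$ both satisfy the condition, then $g_1 h_2 - g_2 h_1$ has degree at most $m+\ell$ and vanishes to order $m+\ell+1$ at $z=1$, hence vanishes identically---pins it down as $P_{m,\ell,p}$. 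Every subsequential limit being the same, the entire family converges, and $\hat r_{m,\ell}$ converges likewise after multiplication by $c_\alpha \to 1$. The explicit formula~(\ref{padep}) then follows from the classical formula for the Padé approximant of $(1+w)^{1/p}$ at $w=0$---expressible via the Gauss hypergeometric function $_2F_1$---under the substitution $w = -(1-z)$.

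The main obstacle is the zero-accumulation step above: one must carefully verify that the finitely many interior zeros of $\varphi_\alpha^{(j)}$ in the collapsing interval $[\alpha^p,1]$ persist through the limit to yield zeros of $(\varphi^\ast)^{(j)}$ at $z=1$, which relies crucially on the boundedness of polynomial degrees so that coefficient convergence upgrades to $C^{\infty}$ convergence locally near $z=1$. A secondary point is matching the resulting hypergeometric representation of the Padé approximant with the closed form~(\ref{padep}), a routine but somewhat tedious manipulation with identities for the Pochhammer symbol.
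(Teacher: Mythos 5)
Your proof is correct in substance, and it takes a genuinely different route from the paper's. The paper disposes of the convergence claim by citing Trefethen and Gutknecht's theorem on the coefficientwise convergence of Chebyshev approximants to Pad\'e approximants on shrinking intervals, noting that their argument adapts from absolute to relative error, and invoking a result of Gomilko, Greco, and Zi\k{e}tak that $P_{m,\ell,p}$ has defect $0$. You instead give a self-contained compactness argument: normalize the coefficient vector of $r_{m,\ell}(\cdot,\alpha,\sqrt[p]{\cdot})$, extract a subsequential limit $(g^\ast,h^\ast)$, use the $m+\ell+1$ sign changes of the equioscillating error together with repeated Rolle to drive zeros of $\varphi_\alpha^{(j)}$ into $z=1$, and conclude the Pad\'e interpolation condition $g^\ast - z^{1/p}h^\ast = O((1-z)^{m+\ell+1})$; Pad\'e uniqueness then forces all subsequential limits to agree. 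This is essentially a reconstruction of the Trefethen--Gutknecht argument, but making it explicit is a legitimate (and arguably more instructive) alternative to a citation, and the Rolle/zero-accumulation step you flag as the crux is indeed correctly handled by the local $C^\infty$ convergence coming from coefficientwise convergence plus the smoothness of $z^{1/p}$ near $1$.

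One point deserves to be made explicit. Your cross-multiplication uniqueness argument shows $g^\ast h_{\text{Pad\'e}} = g_{\text{Pad\'e}} h^\ast$, i.e.\ equality of rational functions, but coefficientwise convergence as defined in the paper is a statement about the \emph{polynomial pair} up to a common scalar. To pass from the rational-function identity to $(g^\ast,h^\ast) \propto (g_{\text{Pad\'e}},h_{\text{Pad\'e}})$ you need that the Pad\'e pair is coprime and of exact degrees $(m,\ell)$ -- that is, you need $P_{m,\ell,p}$ to have defect $0$. Your ``non-degeneracy is immediate'' step only rules out $g^\ast \equiv 0$ or $h^\ast \equiv 0$; it does not exclude a common factor $(1-z)^k$ in the limiting pair, which is the degenerate scenario normality forbids. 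This is precisely the hypothesis the paper imports from Gomilko--Greco--Zi\k{e}tak, so you should either cite it as they do or argue it separately. With that one addition, the argument is complete; the remaining hypergeometric manipulation giving~(\ref{padep}) is, as you say, routine.
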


It follows that the iteration~(\ref{pthrootmit1}-\ref{pthrootmit2}) reduces formally to
\begin{equation} \label{pade}
X_{k+1} = X_k P_{m,\ell,p}\left( X_k^{-p}A \right), \quad X_0 = I
\end{equation}
as $\alpha \uparrow 1$.  This is precisely the Pad\'e iteration for the matrix $p^{th}$ root studied by Laszkiewicz and Zi\k{e}tak~\cite[Equation (36)]{laszkiewicz2009pade}.  When $(m,\ell)=(1,1)$, it is the Halley iteration~\cite[p. 11]{iannazzo2008family},~\cite{guo2010newton}.  In terms of $Y_k = X_k^{1-p}A$ and $Z_k = X_k^{-1}$, the iteration~(\ref{pade}) reads
\begin{align} 
Y_{k+1} &= Y_k Q_{\ell,m,p}\left( Z_k Y_k \right)^{p-1}, & Y_0 &= A, \label{padecoupled1} \\
Z_{k+1} &= Q_{\ell,m,p}\left( Z_k Y_k \right) Z_k, & Z_0 &= I, \label{padecoupled2}
\end{align}
where $Q_{\ell,m,p}(z) = P_{m,\ell,p}(z)^{-1}$.

For later use, it will be convenient to define
\begin{align*}
\hat{r}_{m,\ell}(z,1,\mysqrt[p]{\cdot}) &:= P_{m,\ell,p}(z), \\
h_{\ell,m,p}(z,1) &:= Q_{\ell,m,p}(z).
\end{align*}
The Pad\'e iterations~(\ref{pade}) and~(\ref{padecoupled1}-\ref{padecoupled2}) are then simply the iterations obtained by setting $\alpha=1$ in the minimax iterations~(\ref{pthrootmit1}-\ref{pthrootmit2}) and~(\ref{pcoupled1}-\ref{pcoupled3}), respectively.

\subsection{Stability of the coupled matrix iteration}

As alluded to earlier, the uncoupled matrix iteration~(\ref{pthrootmit1}-\ref{pthrootmit2}) exhibits numerical instability, whereas the coupled iteration~(\ref{pcoupled1}-\ref{pcoupled3}) does not.  We justify the latter claim below.

We recall the following definition.  A matrix iteration $X_{k+1} = g(X_k)$ with fixed point $X_*$ is said to be \emph{stable} in a neighborhood of $X_*$ if the Fr\'echet derivative of $g$ at $X_*$ has bounded powers at $X_*$~\cite[Definition 4.17]{higham2008functions}.  That is, if $L_g(A,E)$ denotes the Fr\'echet derivative of $g$ at $A \in \mathbb{C}^{n\times n}$ in a direction $E \in \mathbb{C}^{n \times n}$, then there exists a constant $c>0$ such $\|G^j(E)\| \le c\|E\|$ for every $j$ and every $E \in \mathbb{C}^{n \times n}$, where $G(E) = L_g(X_*,E)$.

We first address the stability of the coupled Pad\'e iteration~(\ref{padecoupled1}-\ref{padecoupled2}). 

\begin{proposition} \label{prop:stabilitypade}
Let $m,\ell \in \mathbb{N}_0$ and $p,n \in \mathbb{N}$ with $(m,\ell) \neq (0,0)$ and $p \ge 2$.  
The Pad\'e iteration~(\ref{padecoupled1}-\ref{padecoupled2}) is stable in a neighborhood of $(B,B^{-1})$ for any $B \in \mathbb{C}^{n \times n}$.  In particular, with $g(Y,Z) = (YQ_{\ell,m,p}(YZ)^{p-1},Q_{\ell,m,p}(YZ)Z)$, we have
\[
L_g(B,B^{-1}; E,F) = \frac{1}{p} \left( E - (p-1)BFB, \; (p-1)F - B^{-1}EB^{-1} \right)
\]
for any $E,F \in \mathbb{C}^{n \times n}$, and $L_g(B,B^{-1};\cdot,\cdot)$ is idempotent.
\end{proposition}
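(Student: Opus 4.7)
The plan is threefold: (i) confirm $(B,B^{-1})$ is a fixed point, (ii) compute $L_g(B,B^{-1};\cdot,\cdot)$ by the chain rule, and (iii) verify by direct substitution that this linear operator is idempotent. Stability is then immediate: an idempotent linear map $L$ satisfies $L^j = L$ for all $j \ge 1$, so its powers are uniformly bounded.

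The key analytic input is that $P_{m,\ell,p}(z)$ is the type-$(m,\ell)$ Pad\'e approximant of $z^{1/p}$ at $z=1$, so $P_{m,\ell,p}(z) - z^{1/p}$ vanishes to order at least $m+\ell+1$ there. In particular $P_{m,\ell,p}(1) = 1$ and $P_{m,\ell,p}'(1) = 1/p$, and hence $Q_{\ell,m,p}(1) = 1$ and $Q_{\ell,m,p}'(1) = -1/p$. Evaluating $g(Y,Z) = (YQ_{\ell,m,p}(ZY)^{p-1}, Q_{\ell,m,p}(ZY)Z)$ at $(B,B^{-1})$ uses $ZY = B^{-1}B = I$ to yield $g(B,B^{-1}) = (B,B^{-1})$. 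Because $Q_{\ell,m,p}$ is evaluated at the scalar matrix $I$, Fr\'echet derivatives of scalar functions of a matrix collapse to $L_{Q_{\ell,m,p}}(I;H) = Q_{\ell,m,p}'(1)H = -H/p$ and $L_{Q_{\ell,m,p}^{p-1}}(I;H) = -(p-1)H/p$. Combining these with the product rule and $L_{(Y,Z) \mapsto ZY}(B,B^{-1};E,F) = FB + B^{-1}E$ produces the stated formula for $L_g(B,B^{-1};E,F)$ after a short calculation.

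Idempotency is then a direct algebraic check: if $(E',F') := L_g(B,B^{-1};E,F)$, then expanding $E' - (p-1)BF'B$ groups the coefficients of $E$ into $\tfrac{1}{p} + \tfrac{p-1}{p} = 1$ and those of $BFB$ into $-\tfrac{p-1}{p} - \tfrac{(p-1)^2}{p} = -(p-1)$, so $\tfrac{1}{p}(E' - (p-1)BF'B) = E'$; the second component is handled by an identical cancellation. The only real obstacle is bookkeeping --- the matrices $B$, $B^{-1}$, $E$, $F$ do not commute, and one must carefully preserve the left/right order of each factor throughout. The simplification that $ZY = I$ at the fixed point is what makes the chain rule tractable, since otherwise one would face Fr\'echet derivatives of $Q_{\ell,m,p}$ at a non-scalar matrix, which involve more delicate Daleckii--Krein-type formulas.
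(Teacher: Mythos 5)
Your proposal is correct and follows essentially the same route as the paper: both derive $Q_{\ell,m,p}(1)=1$ and $Q_{\ell,m,p}'(1)=-1/p$ from the Padé property, apply the product/chain rule at the fixed point (where $Z_k Y_k = I$ makes the Fréchet derivative of a scalar function collapse to scalar multiplication), and then verify idempotency by substituting the image $(\widetilde E,\widetilde F)$ back into $L_g$. The only cosmetic difference is that the paper states the fixed-point relation $L_g(B,B^{-1};\widetilde E,\widetilde F)=L_g(B,B^{-1};E,F)$ directly, whereas you spell out the coefficient cancellations $\tfrac{1}{p}+\tfrac{p-1}{p}=1$; both proofs also implicitly read the iteration's $Z_kY_k$ in place of the $YZ$ that appears in the proposition's statement.
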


Consider now the coupled minimax iteration~(\ref{pcoupled1}-\ref{pcoupled3}).  Theorem~\ref{thm:fit} established that $\alpha_k$ converges to $1$ in~(\ref{pcoupled3}).  We argue in Section~\ref{sec:numerical} that when $\alpha_k$ is close to 1, it is numerically prudent to set $\alpha_k$ (and all subsequent iterates) equal to 1, thereby reverting to the Pad\'e iteration~(\ref{padecoupled1}-\ref{padecoupled2}).  Since the latter iteration is stable, it follows that the aforementioned modification of~(\ref{pcoupled1}-\ref{pcoupled3}) is stable as well.

\section{Proofs} \label{sec:proofs}

In this section, we prove Theorems~\ref{thm:fit} and~\ref{thm:pthroot}, Corollaries~\ref{cor:spd} and~\ref{cor:spdcoupled}, and Propositions and~\ref{prop:loworder},~\ref{prop:pade}, and~\ref{prop:stabilitypade}.

\subsection{Proof of Theorem~\ref{thm:fit}}
\subsubsection{Equioscillation}

To prove the claims~(\ref{claim1}) and~(\ref{equi}) in Theorem~\ref{thm:fit}, we use an inductive argument. When $k=0$,~(\ref{equi}) holds since the relative error $\frac{\widetilde{f}_0(z)-f(z)}{f(z)}=\frac{2\alpha}{f(z)(1+\alpha)}-1$ decreases monotonically from $\frac{1-\alpha}{1+\alpha}$ to $-\frac{1-\alpha}{1+\alpha}$ as $z$ runs from $f^{-1}(\alpha)$ to $1$.  This shows also that $\varepsilon_0 = \frac{1-\alpha}{1+\alpha}$, so~(\ref{alphaeps}) holds when $k=0$.  
Next, we prove two lemmas in preparation for the inductive step.

\begin{lemma} \label{lemma:alphaupdate2}
Let $f : [0,1] \rightarrow [0,1]$ be a continuous, increasing bijection satisfying~(\ref{assumption1}).  Then the recurrence~(\ref{genit2}) is equivalent to
\begin{equation} \label{alphaupdate2}
\alpha_{k+1} = \frac{1-E_{m,\ell}(f, [f^{-1}(\alpha_k),1])}{1+E_{m,\ell}(f, [f^{-1}(\alpha_k),1])}, \quad \alpha_0 = \alpha.
\end{equation}
\end{lemma}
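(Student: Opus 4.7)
The plan is to reduce the recurrence~(\ref{genit2}) to an explicit formula in $E := E_{m,\ell}(f,[f^{-1}(\alpha_k),1])$ by computing the single quantity $\hat{r}_{m,\ell}(f^{-1}(\alpha_k),\alpha_k,f)$ from the equioscillation picture guaranteed by~(\ref{assumption1}). Once that quantity is pinned down, the claim is a one-line substitution.

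Concretely, let $r = r_{m,\ell}(\cdot,\alpha_k,f)$, which has type exactly $(m,\ell)$ by~(\ref{assumption1}). The characterization~(\ref{rml}) together with the equioscillation clause of~(\ref{assumption1}) tells us that the relative error $(r(z)-f(z))/f(z)$ ranges between $-E$ and $+E$ on $[f^{-1}(\alpha_k),1]$, taking its signed maximum value $+E$ at the left endpoint $z=f^{-1}(\alpha_k)$. Consequently, $r(f^{-1}(\alpha_k))/\alpha_k = 1+E$, while $\min_{z\in[f^{-1}(\alpha_k),1]} r(z)/f(z) = 1-E$. The definition~(\ref{rhat}) of $\hat{r}$ then says $\hat{r} = c\,r$ with $c$ chosen so that the rescaled minimum equals $1$; this forces $c(1-E)=1$, hence $c = 1/(1-E)$ and
\[
\hat{r}_{m,\ell}(f^{-1}(\alpha_k),\alpha_k,f) \;=\; c\,r(f^{-1}(\alpha_k)) \;=\; \frac{\alpha_k(1+E)}{1-E}.
\]
Substituting into~(\ref{genit2}) gives $\alpha_{k+1} = \alpha_k/\hat{r}_{m,\ell}(f^{-1}(\alpha_k),\alpha_k,f) = (1-E)/(1+E)$, which is precisely~(\ref{alphaupdate2}).

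There is no serious obstacle: the argument is purely bookkeeping once the equioscillation geometry is in hand. The only point requiring care is the sign of the error at $z=f^{-1}(\alpha_k)$; if this were negative, the roles of $1+E$ and $1-E$ would swap and the scaling would push $\alpha_{k+1}$ in the wrong direction. This is exactly why~(\ref{assumption1}) specifies that the maximum is attained at the left endpoint, so we simply invoke the hypothesis and are done.
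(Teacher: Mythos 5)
Your proof is correct and takes essentially the same approach as the paper: extract from~(\ref{assumption1}) that the relative error attains $+E$ at the left endpoint, use~(\ref{rhat}) to determine the normalizing constant $1/(1-E)$, and substitute into~(\ref{genit2}). The remark about the sign of the error at the left endpoint being essential is a good observation and matches the role the paper assigns to~(\ref{assumption1}).
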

\begin{proof}
Since
\[
\min_{z \in [f^{-1}(\alpha),1]} \frac{r_{m,\ell}(z,\alpha,f)}{f(z)} = 1 - E_{m,\ell}(f,[f^{-1}(\alpha),1]),
\]
the defining property~(\ref{rhat}) of $\hat{r}_{m,\ell}(z,\alpha,f)$ implies that
\[
\hat{r}_{m,\ell}(z,\alpha,f) = \frac{r_{m,\ell}(z,\alpha,f)}{1 - E_{m,\ell}(f,[f^{-1}(\alpha),1])}.
\]
Also, the assumption~(\ref{assumption1}) implies that
\[
\frac{r_{m,\ell}(f^{-1}(\alpha),\alpha,f)}{f(f^{-1}(\alpha))} = \max_{z \in [f^{-1}(\alpha),1]} \frac{r_{m,\ell}(z,\alpha,f)}{f(z)} = 1 + E_{m,\ell}(f,[f^{-1}(\alpha),1]),
\]
so
\[
\frac{\hat{r}_{m,\ell}(f^{-1}(\alpha),\alpha,f)}{\alpha} = \frac{1 + E_{m,\ell}(f,[f^{-1}(\alpha),1])}{1 - E_{m,\ell}(f,[f^{-1}(\alpha),1])}.
\]
Since this holds for any $\alpha \in (0,1)$, it follows that the recurrence~(\ref{genit2}) is equivalent to~(\ref{alphaupdate2}).
\end{proof}

\begin{lemma} \label{lemma:composition}
Let $f : [0,1] \rightarrow [0,1]$ be a continuous, increasing bijection satisfying~(\ref{assumption1}).
Let $\alpha \in (0,1)$ and $m,\ell \in \mathbb{N}_0$.  Let $\widetilde{F}(z)$ be any continuous function on $[f^{-1}(\alpha),1]$ with the property that $\frac{\widetilde{F}(z)-f(z)}{f(z)}$ equioscillates $q$ times on $[f^{-1}(\alpha),1]$ and achieves its extrema $\pm \varepsilon$ at the endpoints, where $q \ge 2$ and $0<\varepsilon<1$.  Define
\begin{align*}
\alpha' &= \frac{1-\varepsilon}{1+\varepsilon}, \\
\alpha'' &= \frac{1-E_{m,\ell}(f,[f^{-1}(\alpha'),1])}{1+E_{m,\ell}(f,[f^{-1}(\alpha'),1])}, \\
F(z) &= \frac{1+\alpha'}{2\alpha'} \widetilde{F}(z), \\
H(z) &= \frac{2\alpha''}{1+\alpha''} F(z) \hat{r}_{m,\ell}\left( f^{-1}\left( \frac{f(z)}{F(z)} \right), \alpha', f \right).
\end{align*}
Then $\frac{H(z)-f(z)}{f(z)}$ equioscillates $(m+\ell+1)(q-1)+1$ times on $[f^{-1}(\alpha),1]$ with extrema $\pm E_{m,\ell}(f,[f^{-1}(\alpha'),1])$, and it achieves its extrema at the endpoints.
\end{lemma}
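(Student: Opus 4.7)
The plan is to rewrite the relative error of $H$ as the minimax relative error $G(w) := (r_{m,\ell}(w,\alpha',f)-f(w))/f(w)$ evaluated at a composite argument $w(z) := f^{-1}(f(z)/F(z))$, and then to count equioscillation points of $z \mapsto G(w(z))$ via a running-max/min construction on each subinterval carved out by the equioscillation points of $(\widetilde{F}-f)/f$.

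The first step is a direct bookkeeping calculation. From $\alpha' = (1-\varepsilon)/(1+\varepsilon)$ one obtains $(1+\alpha')/(2\alpha') = 1/(1-\varepsilon)$; from $\alpha'' = (1-E)/(1+E)$ with $E := E_{m,\ell}(f,[f^{-1}(\alpha'),1])$ one obtains $2\alpha''/(1+\alpha'') = 1-E$. Combining these with the identity $\hat{r}_{m,\ell}(\cdot,\alpha',f) = r_{m,\ell}(\cdot,\alpha',f)/(1-E)$ from the proof of Lemma~\ref{lemma:alphaupdate2} and the relation $F(z)/f(z) = 1/f(w(z))$ that follows directly from the definition of $w$, everything collapses cleanly to
\[
\frac{H(z)-f(z)}{f(z)} = G(w(z)).
\]
By assumption~(\ref{assumption1}), $G$ equioscillates exactly $m+\ell+2$ times on $[f^{-1}(\alpha'),1]$ at points $w_0 < w_1 < \cdots < w_{m+\ell+1}$, with $w_0 = f^{-1}(\alpha')$, $w_{m+\ell+1} = 1$, and $G(w_k) = (-1)^k E$.

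Next I would analyze the composite argument $w(z)$. From $\widetilde{F}/f \in [1-\varepsilon,1+\varepsilon]$ one gets $F/f \in [1,1/\alpha']$, so $w(z) \in [w_0,w_{m+\ell+1}]$ throughout. At each equioscillation point $z_j$ of $(\widetilde{F}-f)/f$, the ratio $\widetilde{F}(z_j)/f(z_j)$ equals $1\pm\varepsilon$, so $w(z_j) \in \{w_0,w_{m+\ell+1}\}$, with the two choices strictly alternating in $j$. Thus on each subinterval $[z_j,z_{j+1}]$, $w$ is continuous and passes from one endpoint of $[w_0,w_{m+\ell+1}]$ to the other.

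The crux is to produce on each such subinterval a strictly increasing sequence of $m+\ell+2$ points at which $w$ attains $w_0,w_1,\ldots,w_{m+\ell+1}$ in monotone (or reverse) order. The main obstacle is that $w$ need not be monotone, so IVT alone does not supply a strictly increasing indexing. The fix is a running-max construction: in the case $w(z_j)=w_0$ and $w(z_{j+1})=w_{m+\ell+1}$, the function $\widetilde{w}(z) := \max_{s \in [z_j,z]} w(s)$ is continuous and non-decreasing from $w_0$ to a value $\geq w_{m+\ell+1}$, so the points $\zeta_k^{(j)} := \inf\{z \in [z_j,z_{j+1}] : \widetilde{w}(z) \geq w_k\}$ are strictly increasing, and a standard attainment argument forces $w(\zeta_k^{(j)}) = w_k$; the reversed case uses a running minimum. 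Since $G(w(\zeta_k^{(j)})) = (-1)^k E$, the composite error equioscillates at least $m+\ell+2$ times on each subinterval. Finally, concatenating across the $q-1$ subintervals and noting that at each shared boundary $z_{j+1}$ both adjacent subintervals contribute the same sign of $G$ (determined by whether $w(z_{j+1})$ is $w_0$ or $w_{m+\ell+1}$), one duplicate per junction may be dropped while preserving strict alternation. This yields $(q-1)(m+\ell+2)-(q-2) = (m+\ell+1)(q-1)+1$ equioscillation points on $[f^{-1}(\alpha),1]$, with the extrema $\pm E$ attained at the endpoints $f^{-1}(\alpha)$ and $1$ by direct evaluation of $w$ there.
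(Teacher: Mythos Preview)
Your proof is correct and follows essentially the same strategy as the paper: rewrite $H/f-1$ as a composition of the minimax error with an inner function that oscillates between the endpoints of $[f^{-1}(\alpha'),1]$, then count equioscillation points of the composite. The paper carries this out through auxiliary reciprocal-type functions $S(z)=z(1-\varepsilon^2)/\widetilde{F}(f^{-1}(z))$ and $s_{m,\ell}(y,\alpha',f)=y(1-\varepsilon'^2)/r_{m,\ell}(f^{-1}(y),\alpha',f)$ and then verifies $h(z)=(1-\varepsilon'^2)/g(f(z))$ coincides with $H/f$; your route, writing $H/f-1=G(w(z))$ directly, is cleaner and amounts to the same composition after the change of variables $y=f(w)$. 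Your running-max/min construction also supplies a more explicit justification for the counting step that the paper handles with the informal phrase ``$S(z)/(1+\varepsilon)$ runs from/to $\alpha'$ to/from $1$ a total of $q-1$ times.''
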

\begin{proof}
The assumed equioscillation of $\frac{\widetilde{F}(z)}{f(z)}-1$ on $[f^{-1}(\alpha),1]$ implies that the function $\frac{\widetilde{F}(f^{-1}(z))}{z}-1$ equioscillates $q$ times on $[\alpha,1]$ with extrema $\pm \varepsilon$.
If we now define
\[
S(z) = \frac{z (1-\varepsilon^2)}{\widetilde{F}(f^{-1}(z))},
\]
then we conclude that $S(z)-1$ equioscillates $q$ times on $[\alpha,1]$ with extrema $\frac{1-\varepsilon^2}{1 \pm \varepsilon} -1 = \mp \varepsilon$.  Moreover, it achieves its extrema at the endpoints by our assumptions on $\widetilde{F}$.

By the same reasoning as above, the function
\[
s_{m,\ell}(z,\alpha',f) = \frac{z (1-\varepsilon'^2)}{r_{m,\ell}(f^{-1}(z),\alpha',f)}, \quad \varepsilon' = E_{m,\ell}(f,[f^{-1}(\alpha'),1]),
\]
has the property that $s_{m,\ell}(z,\alpha',f)-1$ equioscillates $m+\ell+2$ times on $[\alpha',1]$ with extrema $\pm \varepsilon'$, and it achieves its extrema at the endpoints by the assumption~(\ref{assumption1}).

Consider now the function
\begin{equation} \label{scomps}
g(z) = s_{m,\ell}\left( \frac{S(z)}{1+\varepsilon}, \alpha', f \right).
\end{equation}
We claim that $g(z)-1$ equioscillates on $[\alpha,1]$ with extrema $\pm \varepsilon'$.  To see this, we make two observations.  First, as $z$ runs from $\alpha$ to $1$, $\frac{S(z)}{1+\varepsilon}$ runs from/to $\frac{1-\varepsilon}{1+\varepsilon} = \alpha'$ to/from $\frac{1+\varepsilon}{1+\varepsilon} = 1$ a total of $q-1$ times, achieving its extrema at the endpoints each time.  Second, each time $y = \frac{S(z)}{1+\varepsilon}$ runs from/to $\alpha'$ to/from $1$, $s_{m,\ell}(y,\alpha',f)-1$ equioscillates $m+\ell+2$ times with extrema $\pm \varepsilon'$.  By counting extrema, we conclude that the composition~(\ref{scomps}) (minus 1) equioscillates
\[
(m+\ell+2)(q-1) - (q-2) = (m+\ell+1)(q-1) + 1
\]
times on $[\alpha,1]$ with extrema $\pm \varepsilon'$.

Finally, consider the function
\[
h(z) = \frac{(1-\varepsilon'^2)}{g(f(z))}.
\]
In view of the equioscillation of~(\ref{scomps}), the function $h(z)-1$ equioscillates $(m+\ell+1)(q-1) + 1$ times on $[f^{-1}(\alpha),1]$ with extrema $\frac{1-\varepsilon'^2}{1 \pm \varepsilon'} -1 = \mp \varepsilon'$, and it achieves its extrema at the endpoints. We will complete the proof by showing that $h(z) = \frac{H(z)}{f(z)}$.  Using the fact that $1-\varepsilon' = \frac{2\alpha''}{1+\alpha''}$, $\widetilde{F}(z) = (1-\varepsilon)F(z)$, and $r_{m,\ell}(z,\alpha,f) = (1-\varepsilon')\hat{r}_{m,\ell}(z,\alpha,f)$, we have
\begin{align*}
 h(z)
 &= \frac{(1-\varepsilon'^2)}{ s_{m,\ell}\left( \frac{S(f(z))}{1+\varepsilon}, \alpha', f \right) } \\
&= \frac{r_{m,\ell}\left( f^{-1}\left( \frac{S(f(z))}{1+\varepsilon} \right), \alpha', f \right) }{ \frac{S(f(z))}{1+\varepsilon} } \\
&=  \frac{r_{m,\ell}\left( f^{-1}\left( \frac{f(z)(1-\varepsilon)}{\widetilde{F}(z)} \right), \alpha', f \right) }{ \frac{f(z)(1-\varepsilon)}{\widetilde{F}(z)} } \\
&= (1-\varepsilon') \frac{ F(z) \hat{r}_{m,\ell}\left( f^{-1}\left( \frac{f(z)}{F(z)} \right), \alpha', f \right) }{ f(z) } \\
&= \frac{H(z)}{f(z)}.
\end{align*}
\qed\end{proof}

\begin{remark} \label{remark:sector}
When $f(z)=z^{1/p}$, the function 
\[
s_{m,\ell}(z,\alpha',\mysqrt[p]{\cdot}) = \frac{z (1-\varepsilon'^2)}{r_{m,\ell}(z^p,\alpha',\mysqrt[p]{\cdot})}
\]
appearing in the proof above is a rational approximant of the \emph{sector function} $\mathrm{sect}_p(z) = z/(z^p)^{1/p}$.  In fact, the proof above reveals that on each of the segments $\{ z \in \mathbb{C} \mid e^{-2\pi i j/p} z \in [\alpha',1]\}$, $j=0,1,2,\dots,p-1$, the relative error
\[
\frac{s_{m,\ell}(z,\alpha',\mysqrt[p]{\cdot}) - \mathrm{sect}_p(z)}{\mathrm{sect}_p(z)} = e^{-2\pi i j/p} s_{m,\ell}(z,\alpha',\mysqrt[p]{\cdot}) - 1
\]
is real-valued and equioscillates $m+\ell+2$ times with extrema $\pm \varepsilon'$.  In particular, for $\ell \in \{m-1,m\}$, $s_{m,\ell}(z,\alpha',\sqrt{\cdot})$ is Zolotarev's type-$(2\ell+1,2m)$ best rational approximant of the sign function $\mathrm{sign}(z)=z/(z^2)^{1/2}$ on $[-1,-\alpha'] \cup [\alpha',1]$~\cite{nakatsukasa2016computing}.
\end{remark}

We are now ready to prove~(\ref{claim1}-\ref{equi}).  Suppose~(\ref{equi}) and~(\ref{alphaeps}) hold at step $k$ in the iteration~(\ref{pthrootit1}-\ref{pthrootit2}).  Then Lemma~\ref{lemma:composition} (applied with $\widetilde{F}=\widetilde{f}_k$, $\varepsilon=\varepsilon_k$, and $q=(m+\ell+1)^k+1$, so that $\alpha'=\alpha_k$ and $\alpha''=\alpha_{k+1}$) implies that~(\ref{equi}) and~(\ref{alphaeps}) hold at step $k+1$, so in fact they hold for all $k$.  It now follows immediately that~(\ref{err}) is equivalent to~(\ref{alphaupdate2}), which, in turn, is equivalent to~(\ref{genit2}) by Lemma~\ref{lemma:alphaupdate2}.  This completes the proof of~(\ref{claim1}-\ref{equi}).

\subsubsection{Convergence}
We now address the last claim~(\ref{convergence}) of Theorem~\ref{thm:fit}, which concerns the convergence of $\varepsilon_k$ to $0$ in the iteration
\begin{equation} \label{epsit}
\varepsilon_{k+1} = G(\varepsilon_k), \quad \varepsilon_0 = \frac{1-\alpha}{1+\alpha}, 
\end{equation}
with $\alpha \in (0,1)$,
\begin{equation} \label{Gfunc}
G(\varepsilon) = E_{m,\ell}\left(f,\left[f^{-1}\left(\frac{1-\varepsilon}{1+\varepsilon}\right),1\right]\right),
\end{equation}
and $(m,\ell) \neq (0,0)$.
\begin{lemma} \label{lemma:Gproperties}
Let $m,\ell \in \mathbb{N}_0$, and let $f : [0,1] \rightarrow [0,1]$ be a continuous, increasing bijection satisfying~(\ref{assumption1}).  If $(m,\ell) \neq (0,0)$, then $G$ is continuous, nonnegative, and nondecreasing on $(0,1)$.  Furthermore, $G(\varepsilon) < \varepsilon$ for every $\varepsilon \in (0,1)$.
\end{lemma}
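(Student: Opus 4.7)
The plan is to treat the four claims in turn, reducing each to a property of $\alpha' := \tfrac{1-\varepsilon}{1+\varepsilon}$ and of the minimax error $E_{m,\ell}(f,[a,1])$ viewed as a function of the left endpoint $a$. Nonnegativity is immediate from the definition of $E_{m,\ell}$, so the substantive claims are monotonicity, continuity, and the strict bound $G(\varepsilon)<\varepsilon$.

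\textbf{Monotonicity.} The map $\varepsilon \mapsto \alpha' = \tfrac{1-\varepsilon}{1+\varepsilon}$ is strictly decreasing on $(0,1)$, and $f^{-1}$ is strictly increasing, so $\varepsilon \mapsto f^{-1}(\alpha')$ is strictly decreasing. Hence if $\varepsilon_1 < \varepsilon_2$ then $[f^{-1}(\alpha'_1),1] \supset [f^{-1}(\alpha'_2),1]$. Since taking a maximum over a larger set only increases it, $E_{m,\ell}(f,[a,1])$ is nondecreasing in the left endpoint as that endpoint decreases; equivalently $G$ is nondecreasing in $\varepsilon$.

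\textbf{Continuity.} Continuity of $G$ will reduce to continuity of $a \mapsto E_{m,\ell}(f,[a,1])$ for $a \in (0,1)$. This is a standard fact in rational Chebyshev approximation: upper semicontinuity comes from the fact that any near-optimal $r$ on $[a_0,1]$ restricted to $[a,1]$ with $a$ close to $a_0$ gives an admissible approximant whose relative error changes by $o(1)$ as $a \to a_0$; lower semicontinuity follows similarly using that the optimal $r$ on $[a,1]$ extends to $[a_0,1]$ for $a<a_0$, together with uniform bounds on the coefficients of the minimax approximant in a neighborhood of $a_0$ (which follow from the fact that the minimax value is bounded and that $f$ is bounded away from $0$ on $[a_0/2,1]$, say). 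I would cite this continuity fact directly from the approximation-theory literature rather than prove it from scratch.

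\textbf{Strict bound $G(\varepsilon)<\varepsilon$.} The key observation is that the particular constant
\[
c_\varepsilon \;=\; \frac{2\alpha'}{1+\alpha'} \;=\; 1-\varepsilon
\]
already achieves relative error exactly $\varepsilon$ on $[f^{-1}(\alpha'),1]$: writing $e(z) = c_\varepsilon/f(z)-1$, one checks $e(f^{-1}(\alpha')) = \tfrac{1-\varepsilon}{\alpha'}-1 = +\varepsilon$ and $e(1) = (1-\varepsilon)-1 = -\varepsilon$, and $e$ is monotone decreasing in between because $f$ is increasing. This gives $G(\varepsilon)\le\varepsilon$ automatically. For strict inequality I invoke hypothesis~(\ref{assumption1}): the unique minimax approximant $r_{m,\ell}(\cdot,\alpha',f)$ has exact type $(m,\ell)$, so when $(m,\ell)\ne(0,0)$ it cannot coincide with the constant $c_\varepsilon$. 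By uniqueness of the minimax approximant, the minimax value is strictly less than the error attained by any non-optimal $r\in\mathcal{R}_{m,\ell}$; in particular it is strictly less than $\varepsilon$.

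\textbf{Anticipated obstacle.} The one place that requires care is the continuity statement, because $E_{m,\ell}(f,[a,1])$ is a minimum over a non-compact set of rational functions, and one must rule out degeneration (poles migrating into $[a,1]$ as $a$ varies). On the interval of interest, however, the minimax error stays bounded above by $\varepsilon < 1$, which keeps the numerator and denominator of the normalized minimax approximant in a compact family and removes the difficulty. Apart from this, everything is a direct unpacking of the equioscillation characterization together with the monotonicity of $\varepsilon \mapsto \alpha'$.
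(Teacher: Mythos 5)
Your proposal is correct and matches the paper's argument on the two substantive claims: nonnegativity and monotonicity are immediate, the upper bound $G(\varepsilon)\le\varepsilon$ comes from plugging in the constant $1-\varepsilon$, and strictness follows from assumption~(\ref{assumption1}) forcing the minimizer to have exact type $(m,\ell)\neq(0,0)$ and hence to differ from a constant (with uniqueness then giving strict inequality — you make the uniqueness step a bit more explicit than the paper does, which is fine).

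On continuity, you and the paper take genuinely different routes to the same standard fact. You argue directly that $a\mapsto E_{m,\ell}(f,[a,1])$ is continuous via upper/lower semicontinuity plus a compactness argument ruling out pole degeneration. The paper instead rescales the interval back to a fixed reference interval, viewing the change in $a$ as a perturbation of the \emph{target function} $f$ on that fixed interval, and then cites a known result (Maehly--Witzgall type) that the minimax approximant depends continuously on the target function provided the minimizer has defect~$0$ — which is exactly where assumption~(\ref{assumption1}) is used again. The paper's route has the advantage of shifting the nontrivial compactness/pole-control issue onto a ready-made theorem whose hypothesis (defect~$0$) is already baked into~(\ref{assumption1}); your semicontinuity sketch would need to be fleshed out along the lines you anticipate (uniform coefficient bounds for normalized approximants, monotone one-sided limits), so it is doable but somewhat more work. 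Either way, the overall structure and the crucial use of~(\ref{assumption1}) for both continuity and strictness are the same.
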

\begin{proof}
It is obvious that $G$ is nonnegative and nondecreasing.
To show that $G(\varepsilon) < \varepsilon$ for every $\varepsilon \in (0,1)$, note that~(\ref{Gfunc}) is no larger than the uniform relative error committed by the constant function $g(z)=1-\varepsilon$:
\[
-\varepsilon = \frac{1-\varepsilon - f(1)}{f(1)} \le \frac{g(z)-f(z)}{f(z)} \le \frac{ 1-\varepsilon - f\left(f^{-1}\left( \frac{1-\varepsilon}{1+\varepsilon} \right)\right) }{f\left(f^{-1}\left( \frac{1-\varepsilon}{1+\varepsilon} \right)\right)} = \varepsilon
\]
for every $z \in \left[f^{-1}\left(\frac{1-\varepsilon}{1+\varepsilon}\right),1\right]$.
This establishes that $G(\varepsilon) \le \varepsilon$.  The inequality is in fact strict since we assumed~(\ref{assumption1}), which implies that the minimizer of the relative error is not a constant function when $(m,\ell) \neq (0,0)$.  It remains to show that $G$ is continuous on $(0,1)$.  We assumed in~(\ref{assumption1}) that the minimizer for $E_{m,\ell}(f,[f^{-1}(\alpha),1])$ has defect 0 in $\mathcal{R}_{m,\ell}$ for each $\alpha \in (0,1)$, so, for each fixed $\alpha \in (0,1)$, the map $g \mapsto r_{m,\ell}(\cdot,\alpha,g)$ is continuous with respect to the uniform norm at $g=f$~\cite{maehly1960tschebyscheff}.  By considering functions $g$ obtained by scaling and translating the input to $f$, we deduce that $r_{m,\ell}(\cdot,\alpha,f)$ depends continuously on $\alpha \in (0,1)$, again with respect to the uniform norm.
Hence, the map $\alpha \mapsto E_{m,\ell}(f,[f^{-1}(\alpha),1])$ is continuous on $(0,1)$, and so too is $G$.
\qed\end{proof}

It follows from the above properties of $G$ that $\varepsilon_k \rightarrow 0$ monotonically in the iteration $\varepsilon_{k+1} = G(\varepsilon_k)$ for every $\varepsilon_0 \in (0,1)$.

\subsubsection{Rate of convergence}
It remains to show that the order of convergence of $\varepsilon_k$ to $0$ is $m+\ell+1$.  As we explained in the paragraph below Theorem~\ref{thm:fit}, it suffices to note that when $f$ is $C^{m+\ell+1}$ in a neighborhood of $1$,
\begin{equation*} 
E_{m,\ell}(f,[a,1]) = O((1-a)^{m+\ell+1}), \, \text{ as } a \rightarrow 1.
\end{equation*}
Indeed, this, together with~(\ref{err}), gives
\begin{equation} \label{orderml1}
\varepsilon_{k+1} = O\left( \left( 1 - f^{-1}\left(\frac{1-\varepsilon_k}{1+\varepsilon_k}\right) \right)^{m+\ell+1}\right) = O(\varepsilon_k^{m+\ell+1}),
\end{equation}
assuming $f^{-1}$ is Lipschitz near $1$ and $f^{-1}(1)=1$.  Below, we give more precise information about the constant implicit in~(\ref{orderml1}).  We begin with a lemma that shows, in essence, that the uniform error in the best type-$(m,\ell)$ rational approximant of a function $g(z)$ on a small interval $[-\delta,\delta]$ is about $2^{m+\ell}$ times smaller than the uniform error in the type-$(m,\ell)$ Pad\'e approximant of $g(z)$.  
\begin{lemma} \label{lemma:errshape}
Let $g(z)$ be $C^{m+\ell+1}$ and positive in a neighborhood of $0$.  Assume that the type-$(m,\ell)$ Pad\'e approximant $p(z)$ of $g(z)$ about $0$ has defect $0$ in $\mathcal{R}_{m,\ell}$, and 
\[
p(z) - g(z) = c_g z^{m+\ell+1}+o(z^{m+\ell+1}),
\]
where $c_g \in \mathbb{R}$.    For each $\delta>0$, let 
\[
r_\delta = \argmin_{r \in \mathcal{R}_{m,\ell}} \max_{-\delta \le z \le \delta} \left| \frac{r(z)-g(z)}{g(z)} \right|.
\]
Then, as $\delta \rightarrow 0$,
\[
\max_{-\delta \le z \le \delta} \left| \frac{r_\delta(z) - g(z)}{g(z)} \right| = \frac{2 |c_g|}{g(0)} \left(\frac{\delta}{2}\right)^{m+\ell+1} + o(\delta^{m+\ell+1}).
\]
\end{lemma}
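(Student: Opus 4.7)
The plan is to reduce the rational minimax problem to a classical polynomial Chebyshev problem by linearizing around the Padé approximant $p$. Using the defect-zero hypothesis, write $p = n_p/d_p$ with coprime polynomials of exact degrees $m$ and $\ell$ respectively; in particular $d_p(0) \neq 0$, so both $d_p$ and $g$ are bounded away from zero on a fixed neighborhood of the origin. Since the target minimax error is $O(\delta^{m+\ell+1})$, I would look for $r = (n_p + a)/(d_p + b) \in \mathcal{R}_{m,\ell}$ with $a \in \mathcal{P}_m$, $b \in \mathcal{P}_\ell$ of size $O(\delta^{m+\ell+1})$. A direct calculation gives
\[
r(z) - g(z) = \frac{a(z)d_p(z) - b(z)n_p(z)}{d_p(z)\bigl(d_p(z)+b(z)\bigr)} + c_g z^{m+\ell+1} + o(z^{m+\ell+1}).
\]

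Next, I would exploit Bezout's identity: the coprimality of $n_p$ and $d_p$ makes the linear map $(a,b) \mapsto a d_p - b n_p$ from $\mathcal{P}_m \times \mathcal{P}_\ell$ to $\mathcal{P}_{m+\ell}$ surjective, with one-dimensional kernel $\{t(n_p,d_p) : t \in \mathbb{R}\}$ that corresponds to the trivial perturbation $r \equiv p$. Hence each polynomial $Q \in \mathcal{P}_{m+\ell}$ arises as $a d_p - b n_p$ for some $(a,b)$. On $[-\delta,\delta]$, the denominator $d_p(z)\bigl(d_p(z)+b(z)\bigr)$ equals $d_p(0)^2\bigl(1+O(\delta)\bigr)$, and $g(z) = g(0)\bigl(1+O(\delta)\bigr)$, so the minimax problem asymptotically reduces to
\[
\frac{1}{g(0)}\min_{Q \in \mathcal{P}_{m+\ell}}\max_{|z|\le\delta}\left|c_g z^{m+\ell+1} + \frac{Q(z)}{d_p(0)^2}\right|.
\]
By the classical Chebyshev theorem, this minimum equals $|c_g|\delta^{m+\ell+1}/2^{m+\ell} = 2|c_g|(\delta/2)^{m+\ell+1}$, attained when $c_g z^{m+\ell+1} + Q(z)/d_p(0)^2 = (c_g \delta^{m+\ell+1}/2^{m+\ell})T_{m+\ell+1}(z/\delta)$. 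Dividing by $g(0)$ yields the claimed asymptotic.

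The upper bound is then obtained constructively: solve the Bezout equation to produce $(a,b)$ realizing the Chebyshev-optimal $Q$, verify $\|a\|_\infty, \|b\|_\infty = O(\delta^{m+\ell+1})$, and bound the neglected higher-order terms uniformly. The lower bound is where I expect the main difficulty. One must show that the minimizer $r_\delta$ itself lies in an $O(\delta^{m+\ell+1})$ neighborhood of $p$ once $\delta$ is small, so that the linearization captures its leading-order behavior. A natural route is through equioscillation: since $p$ has defect $0$ and the minimax solution depends continuously on the approximation interval (cf.\ the argument of Lemma~\ref{lemma:Gproperties}), $r_\delta$ inherits defect $0$ and at least $m+\ell+2$ alternations for $\delta$ sufficiently small; these sign conditions, combined with a compactness argument on the rescaled coefficients of $(a,b)$, force the leading-order part of $r_\delta - p$ to coincide with the Chebyshev solution above. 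Carrying out this uniform-in-$\delta$ analysis — in particular, confirming the defect-zero persistence and properly controlling the denominator $d_p(z)(d_p(z)+b(z))$ relative to $d_p(0)^2$ — is where most of the technical work will lie.
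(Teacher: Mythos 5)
Your upper-bound construction is sound and genuinely different from the paper's. The paper avoids the explicit linearization entirely: it picks the polynomial $q$ that best approximates $z^{m+\ell+1}$ on $[-\delta,\delta]$ (so that $z^{m+\ell+1}-q(z)$ is the scaled Chebyshev polynomial), then takes $R$ to be the Pad\'e approximant of the modified function $\bar g = g - c_g q$, and invokes the Trefethen--Gutknecht continuity of Pad\'e approximants under the defect-zero hypothesis to conclude that $R-g$ is asymptotically $2c_g(\delta/2)^{m+\ell+1}T_{m+\ell+1}(z/\delta)$. Your Bezout parameterization $r=(n_p+a)/(d_p+b)$ and the reduction to $\min_Q\max_{|z|\le\delta}|c_g z^{m+\ell+1}+Q(z)/d_p(0)^2|$ achieve the same upper bound by a more hands-on route; it goes through, though one quibble: the solution of $ad_p-bn_p=Q$ generically gives $a,b$ with coefficients of size $O(\delta)$ rather than $O(\delta^{m+\ell+1})$ as you claim (the coefficient of $z^{m+\ell}$ in $Q$ is only $O(\delta)$), but this is harmless --- all the reduction really needs is that $b(z)=o(1)$ on $[-\delta,\delta]$ so that the denominator $d_p(d_p+b)$ stays close to $d_p(0)^2$.

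The genuine gap is in the lower bound, as you yourself anticipate, and the tool you are missing is the de la Vall\'ee Poussin inequality. You do not need to show that the actual minimizer $r_\delta$ lives in an $O(\delta^{m+\ell+1})$ neighborhood of $p$, and you do not need any compactness or defect-zero-persistence argument for $r_\delta$. Your constructed candidate $r$ already has relative error asymptotic to $\frac{2c_g}{g(0)}(\delta/2)^{m+\ell+1}T_{m+\ell+1}(z/\delta)$, which \emph{approximately} equioscillates: there are $m+\ell+2$ points in $[-\delta,\delta]$ at which it alternates in sign with magnitude at least $\frac{2|c_g|}{g(0)}(\delta/2)^{m+\ell+1}-o(\delta^{m+\ell+1})$. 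The de la Vall\'ee Poussin lower bound then immediately gives $\max_{|z|\le\delta}|(r_\delta(z)-g(z))/g(z)|\ge\frac{2|c_g|}{g(0)}(\delta/2)^{m+\ell+1}-o(\delta^{m+\ell+1})$ \emph{without any information about $r_\delta$ itself}. This is exactly how the paper closes the argument. Your proposed route --- proving that $r_\delta$ inherits defect zero uniformly, that it stays in the linearization regime, and then extracting the Chebyshev leading term by compactness --- would require substantially more work and is unnecessary once the de la Vall\'ee Poussin bound is in hand.
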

\begin{proof}
Let 
\begin{equation} \label{bestpoly}
q = \argmin_{r \in \mathcal{R}_{m+\ell,0}} \max_{-\delta \le z \le \delta} |r(z)-z^{m+\ell+1}|.
\end{equation}
Among polynomials of degree $m+\ell+1$ with unit leading coefficient, the polynomial $z^{m+\ell+1}-q(z)$ is the one that deviates least from $0$ on $[-\delta,\delta]$.  
Up to a rescaling, this is precisely the degree-$(m+\ell+1)$ Chebyshev polynomial of the first kind $T_{m+\ell+1}(z)$:
\[
z^{m+\ell+1}-q(z) = 2 \left(\frac{\delta}{2}\right)^{m+\ell+1} T_{m+\ell+1}\left(\frac{z}{\delta}\right).
\]
Now let $R(z)$ be the type $(m,\ell)$-Pad\'e approximant of 
\[
\bar{g}(z) = g(z) - c_g q(z).
\]
Since we assumed that the Pad\'e approximant of $g(z)$ has defect $0$ in $\mathcal{R}_{m,\ell}$, the Taylor coefficients of $R(z)$ approach those of $p(z)$ as $\delta \rightarrow 0$~\cite[Corollary of Theorem 2a]{trefethen1985convergence}.  It follows that for each $\delta>0$ sufficiently small,
\[
R(z) - \bar{g}(z) = \bar{c}_g z^{m+\ell+1} + o(z^{m+\ell+1}),
\]
for some $\bar{c}_g$ with $\bar{c}_g - c_g = o(1)$ as $\delta \rightarrow 0$.
Thus, for each $\delta>0$ sufficiently small,
\begin{align*}
R(z) - g(z) 
&= R(z) - \bar{g}(z) - c_g q(z) \\
&= \bar{c}_g z^{m+\ell+1} - c_g z^{m+\ell+1} +  2 c_g \left(\frac{\delta}{2}\right)^{m+\ell+1} T_{m+\ell+1}\left(\frac{z}{\delta}\right) + o(z^{m+\ell+1}). 
\end{align*}
Hence, as $\delta \rightarrow 0$,
\[ 
R(z) - g(z) = 2 c_g \left(\frac{\delta}{2}\right)^{m+\ell+1} T_{m+\ell+1}\left(\frac{z}{\delta}\right) + o(\delta^{m+\ell+1})
\]
for every $z \in [-\delta,\delta]$, uniformly in $z$.  Multiplying by $\frac{1}{g(z)} = \frac{1}{g(0)} + o(1)$, we conclude that
\begin{equation} \label{errR}
\frac{R(z)-g(z)}{g(z)} = \frac{2c_g}{g(0)} \left(\frac{\delta}{2}\right)^{m+\ell+1} T_{m+\ell+1}\left(\frac{z}{\delta}\right) + o(\delta^{m+\ell+1})
\end{equation}
for every $z \in [-\delta,\delta]$, uniformly in $z$.
Finally, by the definition of $r_\delta$,
\[
\max_{-\delta \le z \le \delta} \left|\frac{r_\delta(z)-g(z)}{g(z)}\right| \le \max_{-\delta \le z \le \delta} \left|\frac{R(z)-g(z)}{g(z)}\right| = \frac{2c_g}{g(0)} \left(\frac{\delta}{2}\right)^{m+\ell+1} + o(\delta^{m+\ell+1}).
\]
In fact, this bound is sharp, for the following reason.  The relation~(\ref{errR}) shows that for $\delta$ sufficiently small, $\frac{R(z)-g(z)}{g(z)}$ approximately equioscillates, in the sense that there exist $m+\ell+2$ points $-\delta \le z_0 \le z_1 \le \dots \le z_{m+\ell+1} \le \delta$ at which $\frac{R(z)-g(z)}{g(z)}$ alternates in sign and satisfies
\[
\left| \frac{R(z_j)-g(z_j)}{g(z_j)} \right| \ge \frac{2|c_g|}{g(0)} \left(\frac{\delta}{2}\right)^{m+\ell+1} - \gamma, \quad j=0,1,\dots,m+\ell+1,
\]
where $\gamma =o(\delta^{m+\ell+1})$.
The de la Vall\'ee Poussin lower bound~\cite[Exercise 24.5]{trefethen2013approximation} then implies that 
\[
\max_{-\delta \le z \le \delta} \left|\frac{r_\delta(z)-g(z)}{g(z)}\right| \ge \frac{2|c_g|}{g(0)} \left(\frac{\delta}{2}\right)^{m+\ell+1} - \gamma.
\]
\qed\end{proof}

\begin{remark}
The proof above suggests a heuristic for constructing near-best rational minimax approximants on short intervals $[-\delta,\delta]$: one computes the Pad\'e approximant of $\bar{g}(z) = g(z) - c_g z^{m+\ell+1} + 2c_g (\delta/2)^{m+\ell+1} T_{m+\ell+1}(z/\delta)$ rather than $g(z)$.
\end{remark}
\begin{remark}
The near equioscillation of $R$ in the proof above can be used to show that $R$ is close to $r_\delta$: $R(z)-r_\delta(z) = o(\delta^{m+\ell+1})$, uniformly in $z \in [-\delta,\delta]$ as $\delta \rightarrow 0$.  The argument is essentially the same as the one used in~\cite[p. 429-430]{trefethen1983caratheodory} to show that Carath\'eodory-F\'ejer approximants are close to minimax approximants on small intervals.
\end{remark}

It is now a simple matter to estimate the constant implicit in~(\ref{orderml1}). 
As $\varepsilon \rightarrow 0$, the above lemma gives
\begin{align*}
G(\varepsilon) &= E_{m,\ell}\left(f,\left[f^{-1}\left(\frac{1-\varepsilon}{1+\varepsilon}\right),1\right]\right) \\
&= \max_{f^{-1}\left(\frac{1-\varepsilon}{1+\varepsilon}\right) \le z \le 1} \left| \frac{r_{m,\ell}(z,\alpha,f)-f(z)}{f(z)} \right| \\
&= \frac{2 |c_{f,\delta}|}{f(1-\delta)} \left( \frac{\delta}{2} \right)^{m+\ell+1} + o(\delta^{m+\ell+1}), \\
&= 2 |c_{f,\delta}| \left( \frac{\delta}{2} \right)^{m+\ell+1} + o(\delta^{m+\ell+1}),
\end{align*}
where 
\[
\delta = \frac{1}{2}\left( 1 - f^{-1}(\alpha) \right), \quad \alpha = \frac{1-\varepsilon}{1+\varepsilon},
\]
and $c_{f,\delta}$ is the Taylor coefficient of $(z-1+\delta)^{m+\ell+1}$ in the difference between $f(z)$ and its type-$(m,\ell)$ Pad\'e approximant about $z=1-\delta$.  
A short calculation shows that $\delta = \varepsilon (f^{-1})'(1)  + o(\varepsilon) = \varepsilon/f'(1)+o(\varepsilon)$ and $c_f := c_{f,0} = c_{f,\delta} + o(1)$, so 
\[
G(\varepsilon) = \frac{|c_f|}{2^{m+\ell} f'(1)^{m+\ell+1}}  \varepsilon^{m+\ell+1} + o(\varepsilon^{m+\ell+1}).
\]
It follows that in the iteration~(\ref{epsit}), we have
\begin{equation} \label{epsestimate}
\varepsilon_{k+1}  = \frac{|c_f|}{2^{m+\ell} f'(1)^{m+\ell+1}}  \varepsilon_k^{m+\ell+1} + o(\varepsilon_k^{m+\ell+1}).
\end{equation}

\subsection{Proof of Theorem~\ref{thm:pthroot}}

Having proved Theorem~\ref{thm:fit}, we now verify that the function $f(z) = z^{1/p}$ satisfies the hypothesis~(\ref{assumption1}), and we prove Theorem~\ref{thm:pthroot}.
 
We begin by establishing a few properties of the minimax approximants $r_{m,\ell}(z,\alpha,\mysqrt[p]{\cdot})$.
The proof of the following lemma is similar to that in~\cite[Lemma 2]{stahl2003best}, which studies rational functions of type $(\ell+1,\ell)$ that minimize the maximum \emph{absolute} error on $[0,1]$ rather than the maximum \emph{relative} error on $[\alpha,1]$, $\alpha>0$.  The proof makes use of the following terminology.  A \emph{Chebyshev system} of dimension $N$ on an interval $I \subseteq \mathbb{R}$ is a linearly independent set $\{g_j(z)\}_{j=1}^N$ of continuous functions on $I$ with the property that any nontrivial linear combination $\sum_{j=1}^N c_j g_j(z)$ has at most $N-1$ (distinct) roots in $I$.

\begin{lemma} \label{lemma:details}
Let $m,\ell \in \mathbb{N}_0$, $0 < a < b < \infty$, and $p \in \mathbb{N}$, $p \ge 2$.  If $r \in \mathcal{R}_{m,\ell}$ minimizes 
\[
\max_{z \in [a,b]} |e(z)|, \quad e(z) =  \frac{r(z) - z^{1/p}}{z^{1/p}},
\]
then $r$ has exact type $(m,\ell)$, $e(z)$ equioscillates exactly $m+\ell+2$ times on $[a,b]$, and
\begin{align} 
e(a) &= \max_{z \in [a,b]} |e(z)|, \label{endpoint} \\
e(b) &= (-1)^{m+\ell+1} \max_{z \in [a,b]} |e(z)|. \label{endpointb}
\end{align}
\end{lemma}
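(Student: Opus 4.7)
The plan is to adapt the classical rational Chebyshev alternation theorem to the relative-error functional for $f(z)=z^{1/p}$, in the spirit of~\cite[Lemma~2]{stahl2003best}.

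First, I would verify that the $m+\ell+2$ functions
\[
\mathcal{U}:=\{1,\,z,\,z^2,\,\ldots,\,z^m,\,z^{1/p},\,z^{1+1/p},\,\ldots,\,z^{\ell+1/p}\}
\]
form a Chebyshev (Markov) system on $(0,\infty)$ — any nontrivial real linear combination has at most $m+\ell+1$ zeros in $(0,\infty)$ — which follows from a generalized Vandermonde-determinant argument using distinctness of the exponents, or by iterated Rolle. Writing $r=g/h$ with $\deg g\le m$, $\deg h\le\ell$, and $h>0$ on $[a,b]$ (finiteness of the minimax relative error rules out poles of $r$ in $[a,b]$), the signed error satisfies
\[
z^{1/p}\,h(z)\,e(z)=g(z)-z^{1/p}h(z)\in\operatorname{span}\mathcal{U},
\]
so $e$ has at most $m+\ell+1$ zeros on $[a,b]$ and therefore equioscillates at most $m+\ell+2$ times on $[a,b]$.

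Pairing this with the classical rational alternation \emph{lower} bound of $m+\ell+2-d$ equioscillations (where $d$ denotes the defect of $r$ in $\mathcal{R}_{m,\ell}$) handles both the exact-type and the exact-count claims: if $d>0$, then $r$ has exact type $(m-d,\ell-d)$, and applying the Chebyshev-system upper bound to this reduced type yields at most $m+\ell+2-2d<m+\ell+2-d$ equioscillations, a contradiction. Hence $d=0$, so $r$ has exact type $(m,\ell)$ and the equioscillation count is exactly $m+\ell+2$.

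For the endpoint claim, I would differentiate explicitly:
\[
e'(z)=\frac{N(z)}{p\,z^{1+1/p}\,h(z)^2},\qquad N(z)=pz\bigl(g'(z)h(z)-g(z)h'(z)\bigr)-g(z)h(z),
\]
and observe that $N$ is a polynomial whose leading coefficient (in $z^{m+\ell}$) equals $(p(m-\ell)-1)$ times the product of the leading coefficients $a_m$ of $g$ and $b_\ell$ of $h$. Crucially, $p(m-\ell)-1\neq 0$ because $p\ge 2$ is an integer, so $\deg N=m+\ell$ exactly, and $e$ has at most $m+\ell$ interior critical points in $(a,b)$. Combined with the count of $m+\ell+2$ equioscillations, this forces both endpoints to be equioscillation points, and the alternating-sign pattern yields $e(a)=\sigma\|e\|_\infty$ and $e(b)=\sigma(-1)^{m+\ell+1}\|e\|_\infty$ for some $\sigma\in\{+1,-1\}$.

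\emph{The main obstacle is fixing $\sigma=+1$.} For this I would use a sign-counting argument on the Markov system $\mathcal{U}$: the $m+\ell+1$ interior zeros of $g-z^{1/p}h$ between consecutive equioscillations already exhaust its Chebyshev-system quota on $(0,\infty)$, so $g-z^{1/p}h$ has constant sign on each of $(0,a)$ and $(b,\infty)$. Comparing these constant signs to the leading behavior at $z\downarrow 0$ (governed by $g(0)=a_0$) and at $z\to\infty$ (dominated by $a_m z^m$ when $m>\ell$, or by $-b_\ell z^{\ell+1/p}$ when $m\le\ell$), and using $r>0$ on $[a,b]$ (so that $g>0$ there, given our choice $h>0$), a case analysis closely paralleling~\cite{stahl2003best} forces the sign at $z=a$ to be positive, i.e., $\sigma=+1$.
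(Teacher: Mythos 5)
Your argument for the exact-type and equioscillation-count claims is exactly the paper's: both pass through the Chebyshev system $W=\operatorname{span}\{1,z,\dots,z^m,z^{1/p},\dots,z^{\ell+1/p}\}$ and combine the resulting upper bound with the alternation lower bound $m+\ell+2-d$. For the endpoint-placement claim you take a genuinely different route: you differentiate $e$, compute the numerator $N(z)=pz(g'h-gh')-gh$, observe that its leading coefficient $(p(m-\ell)-1)a_m b_\ell$ cannot vanish because $p\ge2$, and count critical points. This is correct and clean, and arguably more elementary than the paper's argument, which instead notes that if either endpoint were interior one could choose $c$ so that $e-c$ has $m+\ell+2$ roots, contradicting the Chebyshev bound applied to $z^{1/p}h(z)(e(z)-c)\in W$. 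The two arguments buy essentially the same thing; yours introduces an explicit polynomial while the paper's stays inside the Chebyshev-system framework.

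The gap is in the sign determination $\sigma=+1$. Your plan is to show $g-z^{1/p}h$ has constant sign on $(0,a)$, then read off that sign from the limit $g(0)=a_0$ as $z\downarrow 0$. But knowing $g>0$ on $[a,b]$ does not pin down the sign of $a_0$ (nor of $a_m$ or $b_\ell$): a polynomial positive on $[a,b]$ can certainly be negative at $0$ or at $\infty$, and in fact the explicit $(m,\ell)=(0,1)$ approximant in Proposition~\ref{prop:loworder} already has a denominator with a negative leading coefficient after normalizing $h>0$ on $[a,b]$. The ``case analysis closely paralleling Stahl'' is not spelled out, and it is not clear how it would fix the signs of the extreme coefficients; Stahl's Lemma~2 concerns absolute-error approximation on $[0,1]$ (with $a=0$), where the behavior at the left endpoint is pinned down for free, so the analogy does not transfer directly. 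The paper closes this gap by a continuity-plus-asymptotics argument instead: the map $(a,b)\mapsto e(a;a,b)$ is continuous and never zero, so its sign is constant over $0<a<b<\infty$; for small intervals $[1-\delta,1+\delta]$ the best approximant is close to the Pad\'e approximant $P_{m,\ell,p}$ and the error is asymptotically $2c_f(\delta/2)^{m+\ell+1}T_{m+\ell+1}((z-1)/\delta)$ by Lemma~\ref{lemma:errshape}, so the sign at $z=a$ equals $\operatorname{sign}\bigl((-1)^{m+\ell+1}c_f\bigr)$, which is $+1$ by the explicit formula~(\ref{cfsum}) for $c_f$. To complete your proof you would either need to supply the missing sign analysis of $a_0$ (or $a_m$, $b_\ell$) from scratch, or adopt something like this continuity/Pad\'e argument.
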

\begin{proof}
Suppose that $r(z)=g(z)/h(z)$, where $g(z)$ and $h(z)$ are polynomials of exact degree $m' \le m$ and $\ell' \le \ell$, respectively.  Observe that the function 
\[
z^{1/p} h(z)e(z) = g(z) - z^{1/p} h(z)
\]
belongs to the space $W$ spanned by 
\[
\{ 1, z, z^2, \dots, z^{m'}, z^{1/p}, z^{1+1/p}, z^{2+1/p}, \dots, z^{\ell'+1/p} \},
\]
which is a Chebyshev system on $[a,b]$ of dimension $m'+\ell'+2$.  Thus, $z^{1/p} h(z) e(z)$ has at most $m'+\ell'+1$ zeros on $[a,b]$.  In particular, $e(z)$ has at most $m'+\ell'+1$ zeros on $[a,b]$, so it equioscillates at most $m'+\ell'+2$ times on $[a,b]$.  But $e(z)$ equioscillates at least $m+\ell+2-d$ times on $[a,b]$, where $d=\min\{m-m',\ell-\ell'\} \ge 0$.  It follows that
\[
m'+\ell'+2 \ge m+\ell+2-d,
\]
so
\[
d \ge (m-m') + (\ell-\ell') \ge 2d.
\]
From this we conclude that $d=0$, $m'=m$, $\ell'=\ell$, and $e(z)$ equioscillates exactly $m+\ell+2$ times on $[a,b]$.  

Let $a \le z_0 < z_1 < \dots < z_{m+\ell+1} \le b$ be the points at which $e(z)$ achieves its extrema on $[a,b]$.  Suppose that $z_0 > a$ or $z_{m+\ell+1} < b$.  By considering the graph of $e(z)$, one easily deduces that there exists $c \in \mathbb{R}$ such that $e(z)-c$ has at least $m+\ell+2$ roots in $[a,b]$.  But
\[
z^{1/p}h(z)(e(z)-c) = z^{1/p}h(z)e(z) - c z^{1/p} h(z) \in W,
\]
so $z^{1/p}h(z)(e(z)-c)$ has at most $m'+\ell'+1 = m+\ell+1$ roots in $[a,b]$.  In particular, $e(z)-c$ has at most $m+\ell+1$ roots in $[a,b]$, a contradiction.  It follows that $z_0=a$ and $z_{m+\ell+1}=b$.

It remains to verify that the signs in~(\ref{endpoint}-\ref{endpointb}) are correct.  Consider the dependence of $e(z)$ on the parameters $a$ and $b$.  Denote this dependence by $e(z;a,b)$.  By an argument similar to the one made in the proof of Lemma~\ref{lemma:Gproperties}, the maps $a \mapsto e(a;a,b)$ and $b \mapsto e(a;a,b)$ are continuous on $(0,b)$ and $(a,\infty)$, respectively.  These maps also have no zeros, since $e(z;a,b)$ has a nonzero extremum at $z=a$ for every $0<a<b<\infty$.   Now, for small $\delta>0$, the proof of Lemma~\ref{lemma:errshape} shows that for $z \in [1-\delta,1+\delta]$, 
\[
e(z;1-\delta,1+\delta) = 2c_f \left(\frac{\delta}{2}\right)^{m+\ell+1} T_{m+\ell+1}\left(\frac{z-1}{\delta}\right) + o(\delta^{m+\ell+1}),
\]
where $c_f$ is the coefficient of $(z-1)^{m+\ell+1}$ in the Taylor expansion of $P_{m,\ell,p}(z)-z^{1/p}$ about $z=1$.
In particular, $e(1-\delta;1-\delta,1+\delta)$ has the same sign as $c_f T_{m+\ell+1}(-1)=(-1)^{m+\ell+1}c_f$ for $\delta$ close to $0$, which, as we verify below in~(\ref{signcf}), is positive.  By continuity, $e(a;a,b)>0$ for every $0<a<b<\infty$, and~(\ref{endpoint}-\ref{endpointb}) follow. 
\end{proof}

The preceding lemma shows that the function $f(z) = z^{1/p}$ satisfies the hypothesis~(\ref{assumption1}), so Theorem~\ref{thm:pthroot} will follow if we can show that the constant $C(m,\ell,p)$ in the estimate~(\ref{epsestimatep}) is given by~(\ref{constp}).  In view of the general estimate~(\ref{epsestimate}), it suffices to determine the coefficient $c_f$ of the leading-order term $c_f (z-1)^{m+\ell+1}$ in $P_{m,\ell,p}(z)-z^{1/p}$, where $P_{m,\ell,p}(z)$ is the Pad\'e approximant~(\ref{padep}) of $z^{1/p}$ about $z=1$.  
This is given by~\cite[Lemma 3.12]{gomilko2012regions}
\begin{equation} \label{cfsum}
c_f = (-1)^{m+\ell+1} \frac{ m! \ell! (1/p)_{\ell+1} (1-1/p)_m }{ (m+\ell+1)!(m+\ell)! }.
\end{equation} 
Inserting this into~(\ref{epsestimate}) and noting that $f'(1)=\frac{1}{p}$ and 
\begin{equation} \label{signcf}
|c_f| = (-1)^{m+\ell+1}c_f,
\end{equation}
we obtain~(\ref{constp}).

\subsection{Proof of Corollaries~\ref{cor:spd} and~\ref{cor:spdcoupled}}

To prove Corollaries~\ref{cor:spd} and~\ref{cor:spdcoupled}, observe that with $e_k(z) = \frac{\widetilde{f}_k(z)-z^{1/p}}{z^{1/p}}$, we have
\begin{align*}
\widetilde{X}_k A^{-1/p} - I &= e_k(A), 
\end{align*}
\begin{align*}
\widetilde{Y}_k A^{-1/p} - I 
&= \widetilde{X}_k^{-(p-1)} A^{(p-1)/p} - I  \\
&= (I+e_k(A))^{-(p-1)} \left( I - (I+e_k(A))^{p-1} \right),
\end{align*}
and
\begin{align*}
\widetilde{Z}_k A^{1/p} - I 
&= \widetilde{X}_k^{-1} A^{1/p} - I \\
&= -(I+e_k(A))^{-1} e_k(A).
\end{align*}
The results follow from the above equalities and the bounds
\[
\|e_k(A)\|_2 \le \max_{\alpha^p \le z \le 1} |e_k(z)| = \varepsilon_k,
\]
\[
\|(I+e_k(A))^{-1}\|_2 \le \frac{1}{1-\|e_k(A)\|_2} \le \frac{1}{1-\varepsilon_k},
\]
and
\begin{align*}
\| I - (I+e_k(A))^{p-1} \|_2  
&= \left\| -\sum_{j=1}^{p-1} \binom{p-1}{j} e_k(A)^j \right\|_2 \\
&\le \sum_{j=1}^{p-1} \binom{p-1}{j} \varepsilon_k^j \\
&= (1+\varepsilon_k)^{p-1} - 1.
\end{align*}

\subsection{Proof of Proposition~\ref{prop:loworder}}

To prove the formula~(\ref{rhat10}) for $\hat{r}_{1,0}(z,\alpha,\mysqrt[p]{\cdot})$, it suffices to show that the function
\[
\hat{e}(z) := \frac{\hat{r}_{1,0}(z,\alpha,\mysqrt[p]{\cdot}) - z^{1/p}}{z^{1/p}}
\]
achieves its global maximum on $[\alpha^p,1]$ at both endpoints and has global minimum $0$ on $[\alpha^p,1]$.  Indeed, if this is the case, then the rescaled function
\[
\frac{2}{2+\hat{e}(1)} \hat{r}_{1,0}(z,\alpha,\mysqrt[p]{\cdot}) 
\]
has relative error which equioscillates three times on $[\alpha^p,1]$, and so must be the minimizer for $E_{1,0}(\mysqrt[p]{\cdot},[\alpha^p,1])$.  A calculation verifies that $\hat{e}(z)$ has a critical point at $z=\mu^p$, $\hat{e}(\mu^p)=0$, $\hat{e}(\alpha^p)=\hat{e}(1)$, $\hat{e}(z)$ is decreasing on $(\alpha^p,\mu^p)$, and  $\hat{e}(z)$ is increasing on $(\mu^p,1)$.

The proof of~(\ref{rhat01}) is similar. In this case, a calculation verifies that the function
\[
\hat{e}(z) := \frac{\hat{r}_{0,1}(z,\alpha,\mysqrt[p]{\cdot}) - z^{1/p}}{z^{1/p}}
\] 
has a critical point at $z=1/\nu^p$, $\hat{e}(1/\nu^p)=0$, $\hat{e}(\alpha^p)=\hat{e}(1)$, $\hat{e}(z)$ is decreasing on $(\alpha^p,1/\nu^p)$, and  $\hat{e}(z)$ is increasing on $(1/\nu^p,1)$.

\subsection{Proof of Proposition~\ref{prop:pade}}

Trefethen and Gutknecht~\cite[Theorem 3b]{trefethen1985convergence} have shown that for any function $f$ analytic in a neighborhood of $1$, $\argmin_{r \in \mathcal{R}_{m,\ell}} \linebreak \max_{z \in [1-\delta,1]} |r(z)-f(z)|$ converges coefficientwise as $\delta \rightarrow 0$ to the type-$(m,\ell)$ Pad\'e approximant of $f$ about $z=1$, provided that the Pad\'e approximant has defect $0$ in $\mathcal{R}_{m,\ell}$.  Their proof carries over easily to minimizers of the relative error $|(r(z)-f(z))/f(z)|$, assuming $f(1) \neq 0$.  Since $P_{m,\ell,p}(z)$ has defect $0$ in $\mathcal{R}_{m,\ell}$~\cite{gomilko2012pade}, Proposition~\ref{prop:pade} follows.  The explicit formula~(\ref{padep}) for $P_{m,\ell,p}(z)$ is from~\cite[p. 954]{laszkiewicz2009pade}.

\subsection{Proof of Proposition~\ref{prop:stabilitypade}}

Since $Q_{\ell,m,p}(z)^{-1} = P_{m,\ell,p}(z)$ is a Pad\'e approximant of $f(z)=z^{1/p}$ about $z=1$ of type $(m,\ell) \neq (0,0)$, we have $Q_{\ell,m,p}(1)=1$ and 
\[
-Q_{\ell,m,p}'(1) = \frac{-Q_{\ell,m,p}'(1)}{Q_{\ell,m,p}(1)^2} = P_{m,\ell,p}'(1) = f'(1) = \frac{1}{p}.
\]
Hence, $Q_{\ell,m,p}(I) = I$, $L_{Q_{\ell,m,p}}(I,E) = -\frac{1}{p}E$, and $L_{Q_{\ell,m,p}^{p-1}}(I,E) = -\frac{p-1}{p}E$ for any $E \in \mathbb{C}^{n \times n}$.  Thus, with $g(Y,Z) = (Y Q_{\ell,m,p}(ZY)^{p-1}, Q_{\ell,m,p}(ZY) Z)$, we obtain
\begin{align*}
L_g(B,B^{-1}; E,F) &= \left( E - B \left(\frac{p-1}{p}\right) (FB+B^{-1}E), \; F - \frac{1}{p} (FB+B^{-1}E) B^{-1} \right)\\
&= \frac{1}{p} \left( E - (p-1)BFB, \; (p-1)F - B^{-1}EB^{-1} \right).
\end{align*}
Setting $\widetilde{E} = \frac{1}{p}(E-(p-1)BFB)$ and $\widetilde{F} = \frac{1}{p}((p-1)F-B^{-1}EB^{-1})$, we find that $L_g(B,B^{-1}; \widetilde{E},\widetilde{F}) = L_g(B,B^{-1}; E, F)$, so $L_g(B,B^{-1}; \cdot,\cdot)$ is idempotent.

\section{Numerical examples} \label{sec:numerical}

In this section, we present numerical examples and discuss the implementation of the rational minimax iteration~(\ref{pcoupled1}-\ref{pcoupled3}).

\subsection{Implementation}
Implementing the rational minimax iteration~(\ref{pcoupled1}-\ref{pcoupled3}) requires evaluating the rational function $h_{\ell,m,p}(z,\alpha_k) = \hat{r}_{m,\ell}(z,\alpha_k,\mysqrt[p]{\cdot})^{-1}$ at a matrix argument $Z_k Y_k$.  With the exception of the special cases detailed in Section~\ref{sec:specialcases}, explicit formulas for this function are not available.  Nevertheless, $\hat{r}_{m,\ell}(z,\alpha_k,\mysqrt[p]{\cdot})$ (or, more precisely, its unscaled counterpart $r_{m,\ell}(z,\alpha_k,\mysqrt[p]{\cdot})$) can be computed numerically using, for instance, the function \verb$MiniMaxApproximation$ from Mathematica's \verb$FunctionApproximations$ package.  We used this function along with \verb$Apart$ to compute $h_{\ell,m,p}(z,\alpha_k)$ in partial fraction form.  For $\alpha_k$ close to $1$, the computation of $h_{\ell,m,p}(z,\alpha_k)$ poses numerical difficulties, so we rounded $\alpha_k$ to $1$ (thereby reverting to the Pad\'e iteration~(\ref{padecoupled1}-\ref{padecoupled2})) whenever $\alpha_k>0.99$.  We also observed that for $\alpha_k$ close to $0$ and $\ell=m$, accuracy improved if $r_{m,m}(z,\alpha_k,\mysqrt[p]{\cdot})$ was computed as $R(1/z)$, where $R = \argmin_{r \in \mathcal{R}_{m,m}} \max_{1 \le z \le \alpha_k^{-p}} |(r(z)-z^{-1/p})/z^{-1/p}|$. 

Note that a more robust option for computing minimizers of the maximum \emph{absolute} error $|r(z)-f(z)|$ is the Chebfun function \verb$minimax$~\cite{driscoll2014chebfun}.  However, Chebfun currently does not support minimization of the maximum \emph{relative} error $|(r(z)-f(z))/f(z)|$.
 
Algorithm~\ref{alg:minimax} summarizes the implementation of the rational minimax iteration~(\ref{pcoupled1}-\ref{pcoupled3}).  For simplicity, it focuses on the type $(m,m)$ iteration.  The type $(m,\ell)$ iteration with $\ell \neq m$ is similar, but the form of the partial fraction expansion of $h_{\ell,m,p}(z,\alpha)$ varies with $\ell$.  In the algorithm, the eigenvalues of $A$ with the smallest and largest magnitudes are denoted $\lambda_{\mathrm{min}}(A)$ and $\lambda_{\mathrm{max}}(A)$,  respectively.

\begin{algorithm}
\caption{Type-$(m,m)$ rational minimax iteration for the matrix $p^{th}$ root}
\label{alg:minimax}
\begin{algorithmic}[1]
\STATE{$\tau = |\lambda_{\mathrm{max}}(A)|$} \label{line:scaling}
\STATE{$\alpha_0 = |\lambda_{\mathrm{min}}(A)/\lambda_{\mathrm{max}}(A)|^{1/p}$} \label{line:alpha0}
\STATE{$Y_0 = A/\tau$}
\STATE{$Z_0 = I$}
\STATE{$k=0$}
\WHILE{not converged}
\STATE{Compute $h_{m,m,p}(z,\alpha_k)$ and its partial fraction expansion
\[
h_{m,m,p}(z,\alpha_k) = a_0 + \sum_{j=1}^m \frac{a_j}{z+b_j}.
\]
}
\STATE{$W = \sum_{j=1}^m a_j (Z_k Y_k + b_j I)^{-1}$} \label{line:sum}
\STATE{$Y_{k+1} = Y_k(a_0 I + W)^{p-1}$} \label{line:Ykp1}
\STATE{$Z_{k+1} = a_0 Z_k + W Z_k$} \label{line:Zkp1}
\STATE{$\alpha_{k+1} = \alpha_k h_{m,m,p}(\alpha_k^p,\alpha_k)$}
\STATE{$k = k+1$}
\ENDWHILE
\STATE{$\widetilde{Y}_k = \tau^{1/p} (1+\alpha_k)^{p-1} Y_k / (2\alpha_k)^{p-1}$}
\STATE{$\widetilde{Z}_k = \tau^{-1/p} (1+\alpha_k) Z_k / (2\alpha_k)$}
\RETURN $\widetilde{Y}_k \approx A^{1/p}$, $\widetilde{Z}_k \approx A^{-1/p}$
\end{algorithmic} 
\end{algorithm}

The choices of $\alpha_0$ and $\tau$ used in the algorithm are motivated by Corollary~\ref{cor:spdcoupled}: they ensure that the spectrum of $A/\tau$ is contained in the annulus $\{z \in \mathbb{C} \mid \alpha_0^p \le |z| \le 1\}$.  In particular, if $A$ is Hermitian positive definite, then the spectrum of $A/\tau$ is contained in $[\alpha_0^p,1]$, and Corollary~\ref{cor:spdcoupled} is directly applicable.  Neither $\lambda_{\mathrm{min}}(A)$ nor $\lambda_{\mathrm{max}}(A)$ need to be computed accurately; our experience suggests that estimates can be used without significantly degrading the algorithm's performance.

As a termination criterion, we terminated the iterations when
\[
\|\widetilde{Z}_{k-1} \widetilde{Y}_{k-1} - I \|_{\infty} \le p \left( \frac{\Delta}{(p-1)C(m,\ell,p)} \right)^{1/(m+\ell+1)},
\]
where $\Delta=10^{-15}$ is a relative error tolerance.
This is a generalization to arbitrary $p$ of the termination criterion described in~\cite[Section 4.3]{gawlik2018zolotarev}. 

\paragraph{Floating point operations}
If $A$ is $n \times n$ and $(a_0 I +W)^{p-1}$ is computed with binary powering in Line~\ref{line:Ykp1} of Algorithm~\ref{alg:minimax}, then the cost of each iteration in Algorithm~\ref{alg:minimax} is about $(6+2m+\beta \log_2(p-1))n^3$ flops, where $\beta \in [1,2]$~\cite[p. 72]{higham2008functions}.  In the first iteration, the cost reduces to $(2+2m+\beta \log_2(p-1))n^3$ flops since $Z_0=I$.  If parallelism is exploited, then the $m$ matrix inversions in Line~\ref{line:sum} can be performed simultaneously, as can Lines~\ref{line:Ykp1}-\ref{line:Zkp1}.  The effective cost of such a parallel implementation is $(4+\beta \log_2(p-1))n^3$ flops in the first iteration and $(6 + \beta \log_2(p-1))n^3$ flops in each remaining iteration.  Further savings in computational costs can be achieved when $p=2$; see~\cite[Section 4.2]{gawlik2018zolotarev} for details.

\subsection{Scalar iteration}

\begin{table}[t]
\centering
\pgfplotstabletypeset[
header=false,
font=\small,
clear infinite,
every head row/.style={before row=\midrule,after row=\midrule},
every last row/.style={after row=\midrule,before row=\midrule},
    every head row/.append style={
        before row=\toprule,
        before row/.add={}{%
        {} %
        & \multicolumn{4}{c|}{$(m,\ell,p)=(1,1,13)$} %
        & \multicolumn{4}{c|}{$(m,\ell,p)=(2,2,3)$} %
        & \multicolumn{4}{c|}{$(m,\ell,p)=(3,3,5)$}\\ \midrule
        }},
columns={0,1,2,3,4,5,6},
columns/0/.style={sci zerofill,column type/.add={|}{|},column name={$k$}},
columns/1/.style={dec sep align={c|},sci,sci 10e,sci zerofill,precision=4,column type/.add={}{|},column name={$\varepsilon_k$}},
columns/3/.style={dec sep align={c|},sci,sci 10e,sci zerofill,precision=4,column type/.add={}{|},column name={$\varepsilon_k$}}, 
columns/5/.style={dec sep align={c|},sci,sci 10e,sci zerofill,precision=4,column type/.add={}{|},column name={$\varepsilon_k$}}, 
columns/2/.style={dec sep align={c|},sci,sci 10e,sci zerofill,precision=2,column type/.add={}{|},column name={$\varepsilon_k/\varepsilon_{k-1}^{m+\ell+1}$}},
columns/4/.style={dec sep align={c|},sci,sci 10e,sci zerofill,precision=2,column type/.add={}{|},column name={$\varepsilon_k/\varepsilon_{k-1}^{m+\ell+1}$}},
columns/6/.style={dec sep align={c|},sci,sci 10e,sci zerofill,precision=2,column type/.add={}{|},column name={$\varepsilon_k/\varepsilon_{k-1}^{m+\ell+1}$}},
]
{convrate.dat}
\caption{Values of $\{\varepsilon_k\}_{k=1}^{3}$ generated by the iteration~(\ref{epsit}) with $f(z)=z^{1/p}$ for various choices of $m$, $\ell$, $p$, and $\varepsilon_0$. In each instance, the ratios $\varepsilon_k/\varepsilon_{k-1}^{m+\ell+1}$ approach the constant $C(m,\ell,p)$ given by~(\ref{constp}), whose value is recorded in the last row of the table for reference.}
\label{tab:convrate}
\end{table}

\paragraph{Asymptotic convergence rates}
To verify the asymptotic convergence rates predicted by Theorem~\ref{thm:pthroot}, we computed $\varepsilon_k = \frac{1-\alpha_k}{1+\alpha_k}$, $k=1,2,3$, for various choices of $m$, $\ell$, $p$, and $\varepsilon_0$.  
Table~\ref{tab:convrate} reports the results for three such choices.  (We selected values of $m$, $\ell$, $p$, and $\varepsilon_0$ so that the asymptotic regime was reached before convergence to machine precision occurred.)  The table demonstrates that the ratios $\varepsilon_k/\varepsilon_{k-1}^{m+\ell+1}$ approach the constant $C(m,\ell,p)$ given by~(\ref{constp}).  Note that the entry in the row $k=3$ of the last column should be ignored, since $\varepsilon_3$ is below machine precision in that instance.

\paragraph{Complex inputs}

\newcommand\scalefactor{0.25}
\begin{figure}\hspace{-0.2in}
\begin{tabular}{ccc} \vspace{0.05in}
\hspace{-0.0in}\includegraphics[scale=\scalefactor]{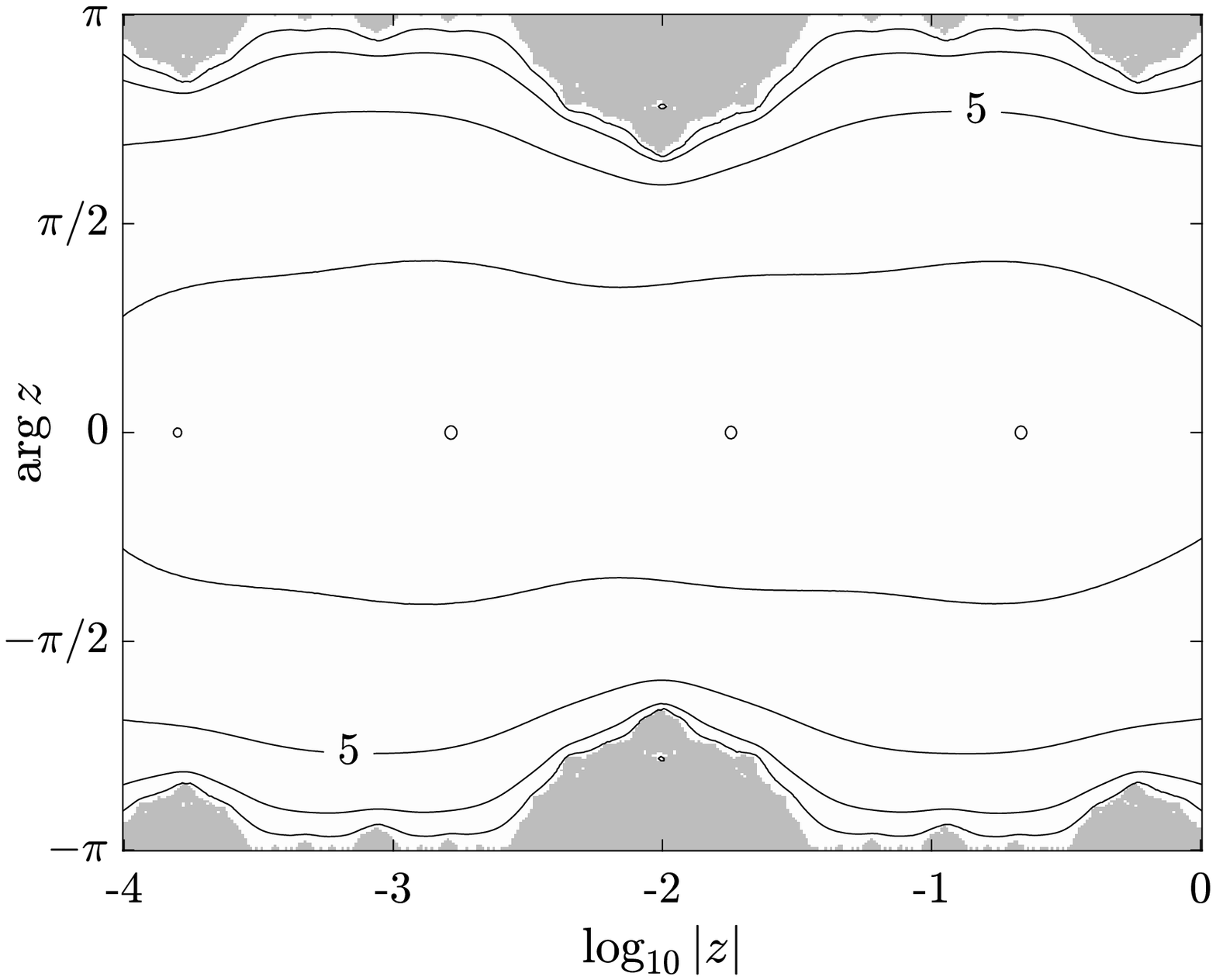} &
\hspace{-0.01in}\includegraphics[scale=\scalefactor]{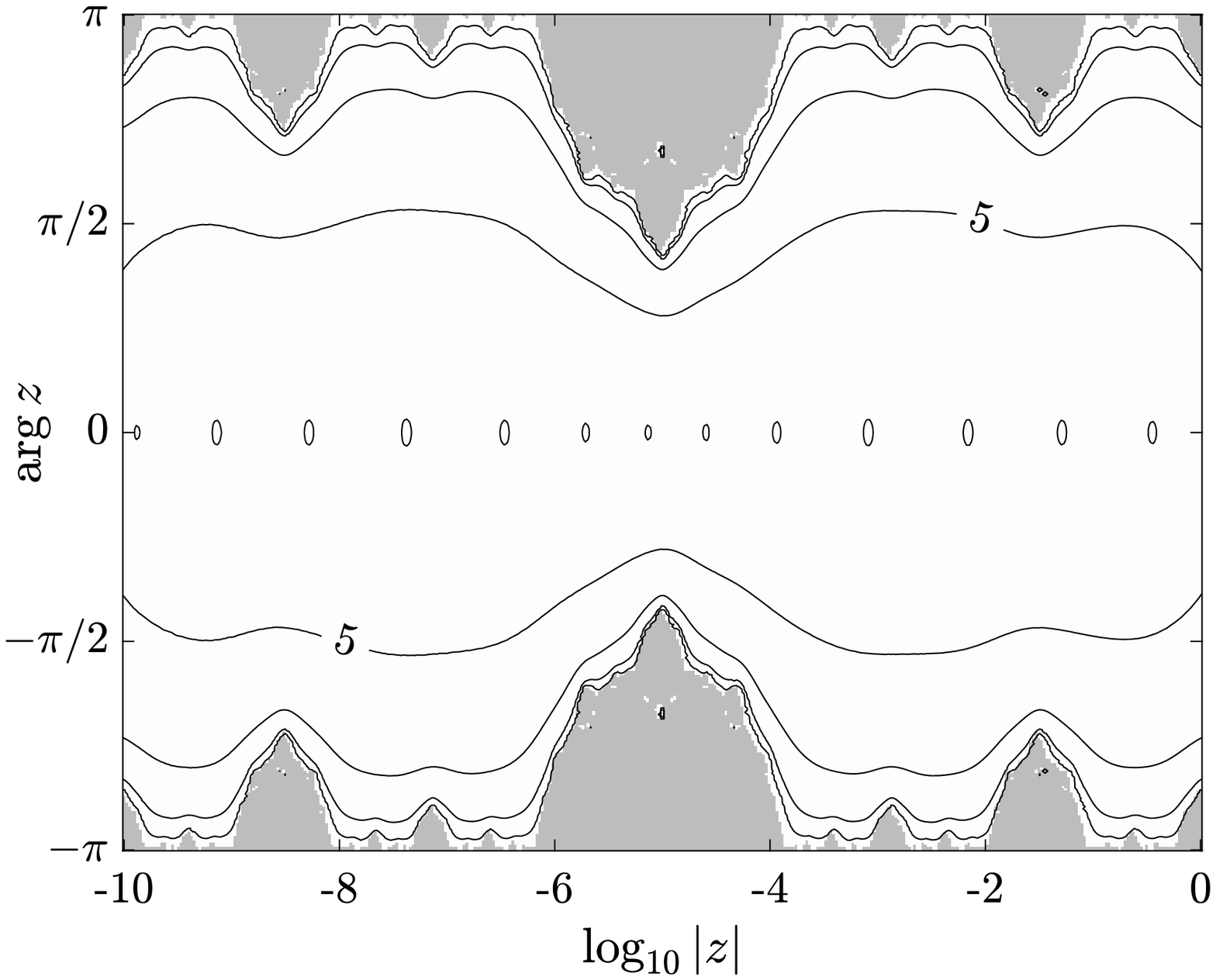} &
\hspace{-0.01in}\includegraphics[scale=\scalefactor]{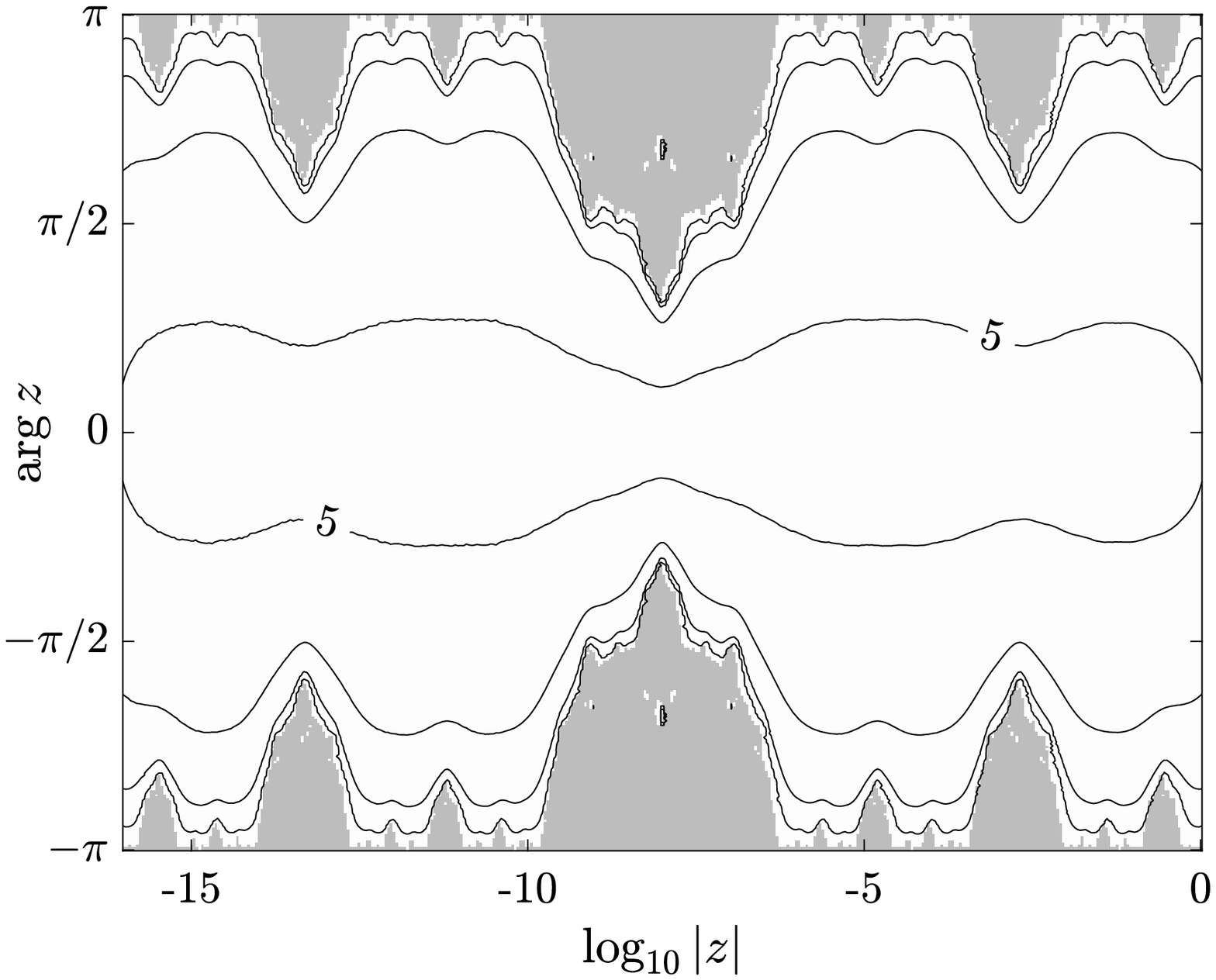} \\ \vspace{0.05in}
\hspace{-0.0in}\includegraphics[scale=\scalefactor]{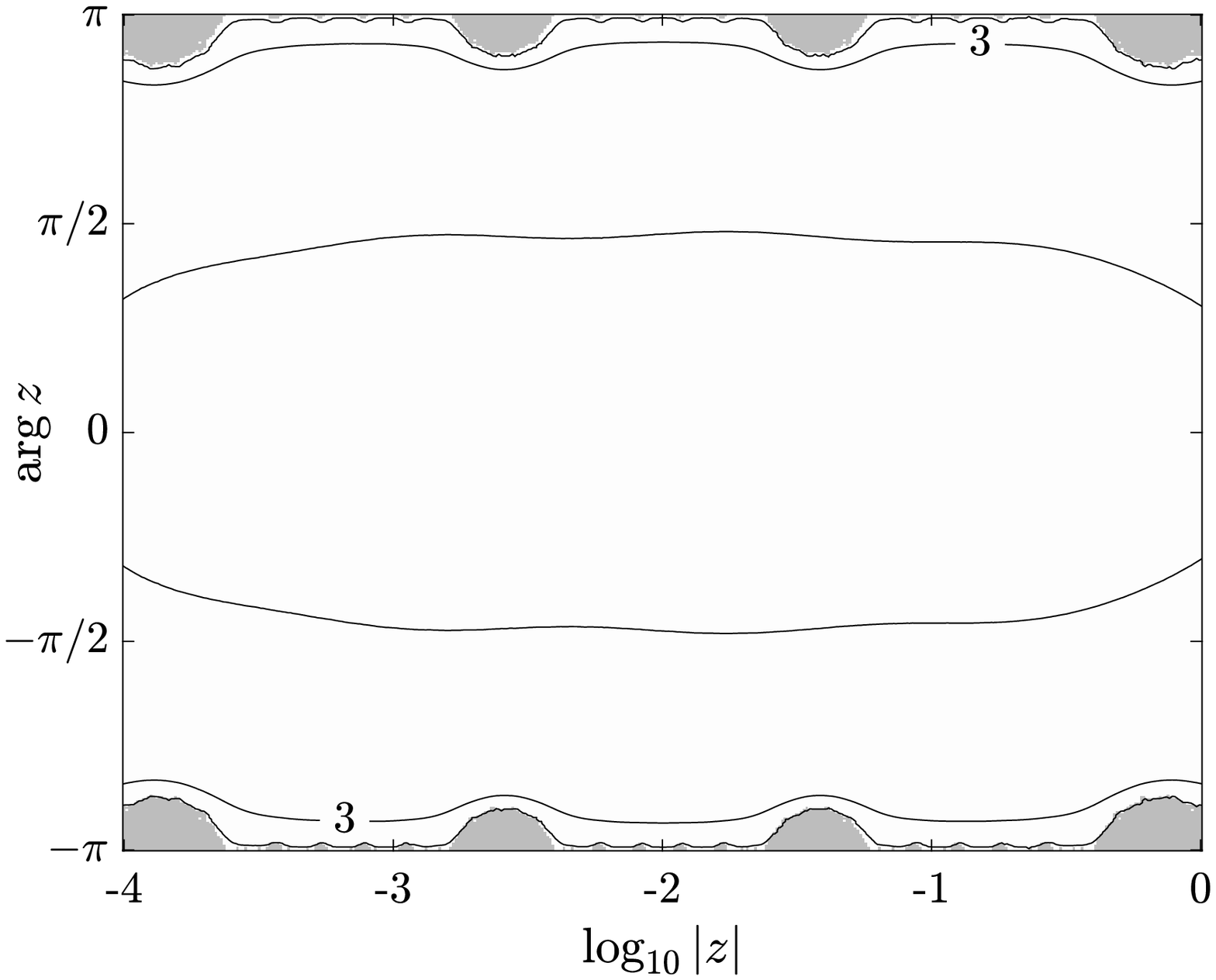} &
\hspace{-0.01in}\includegraphics[scale=\scalefactor]{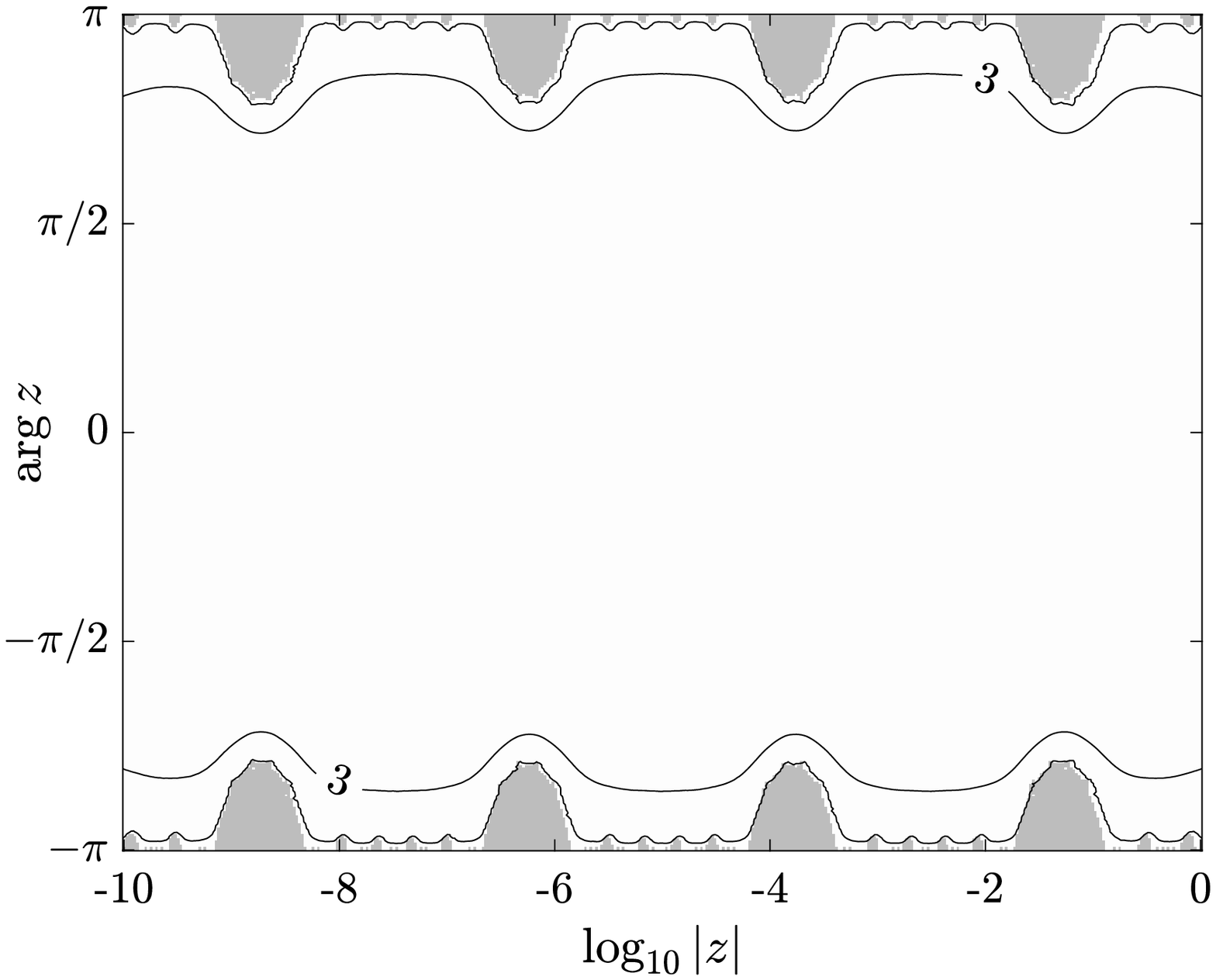} &
\hspace{-0.01in}\includegraphics[scale=\scalefactor]{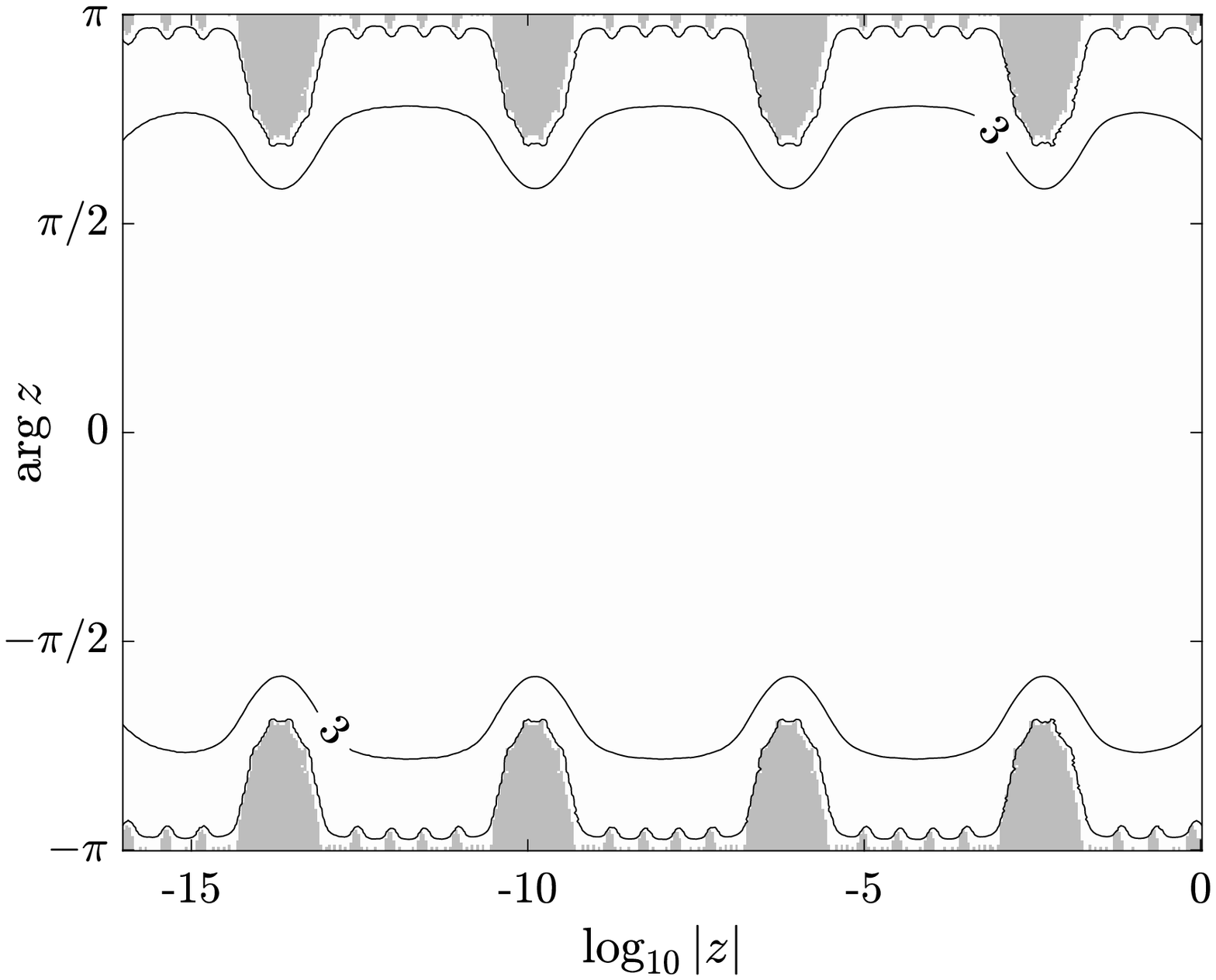} \\
\hspace{-0.0in}\includegraphics[scale=\scalefactor]{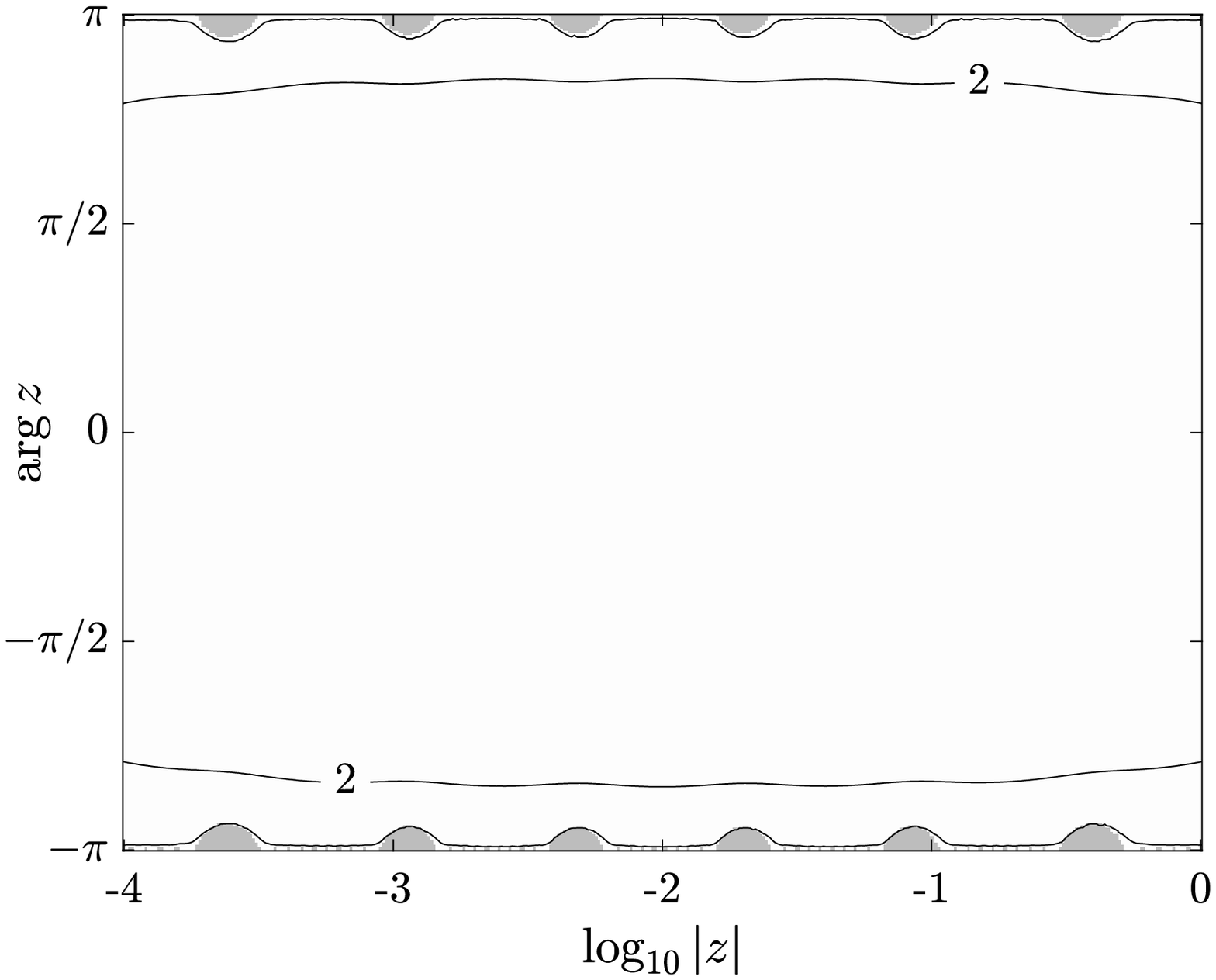} &
\hspace{-0.01in}\includegraphics[scale=\scalefactor]{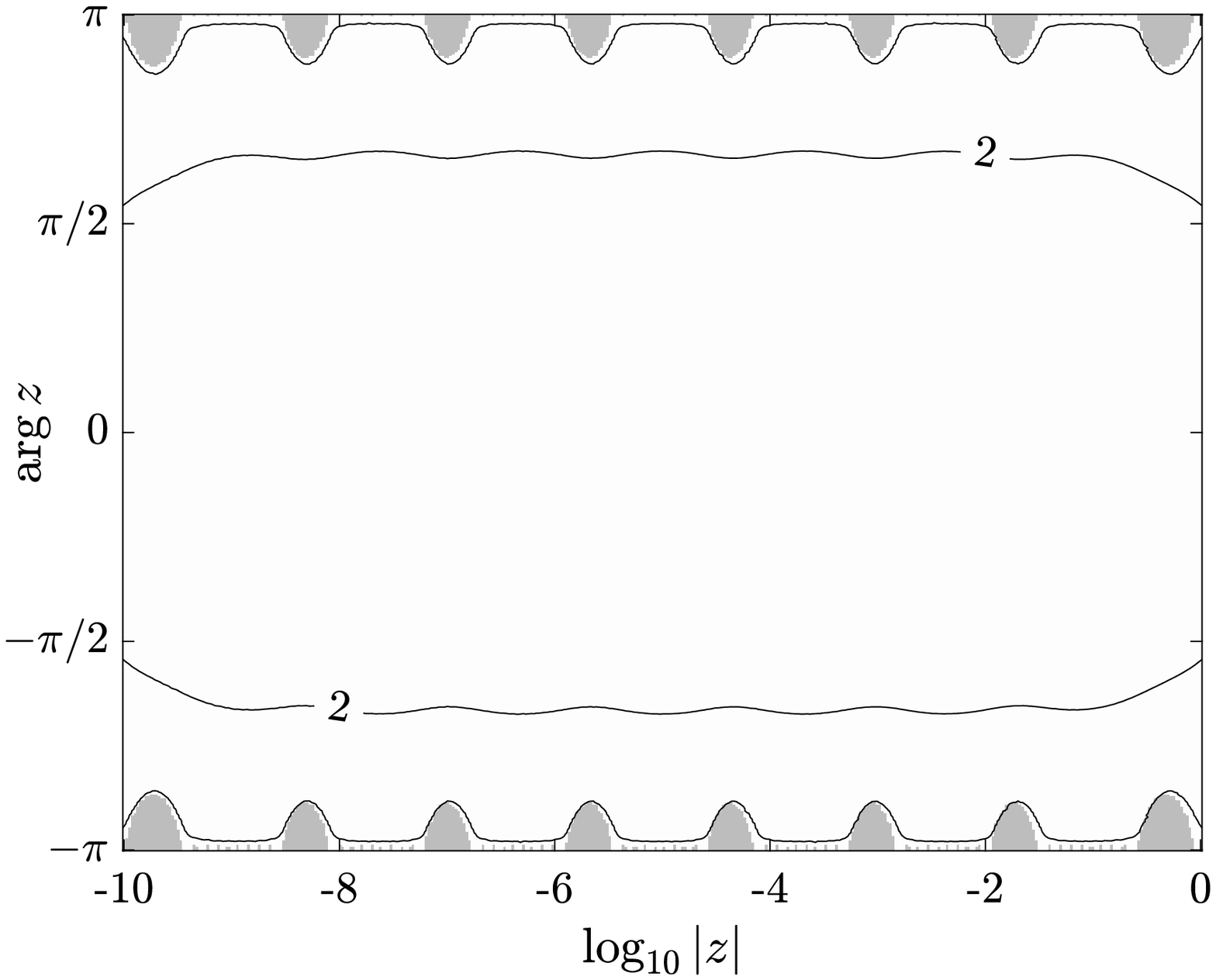} &
\hspace{-0.01in}\includegraphics[scale=\scalefactor]{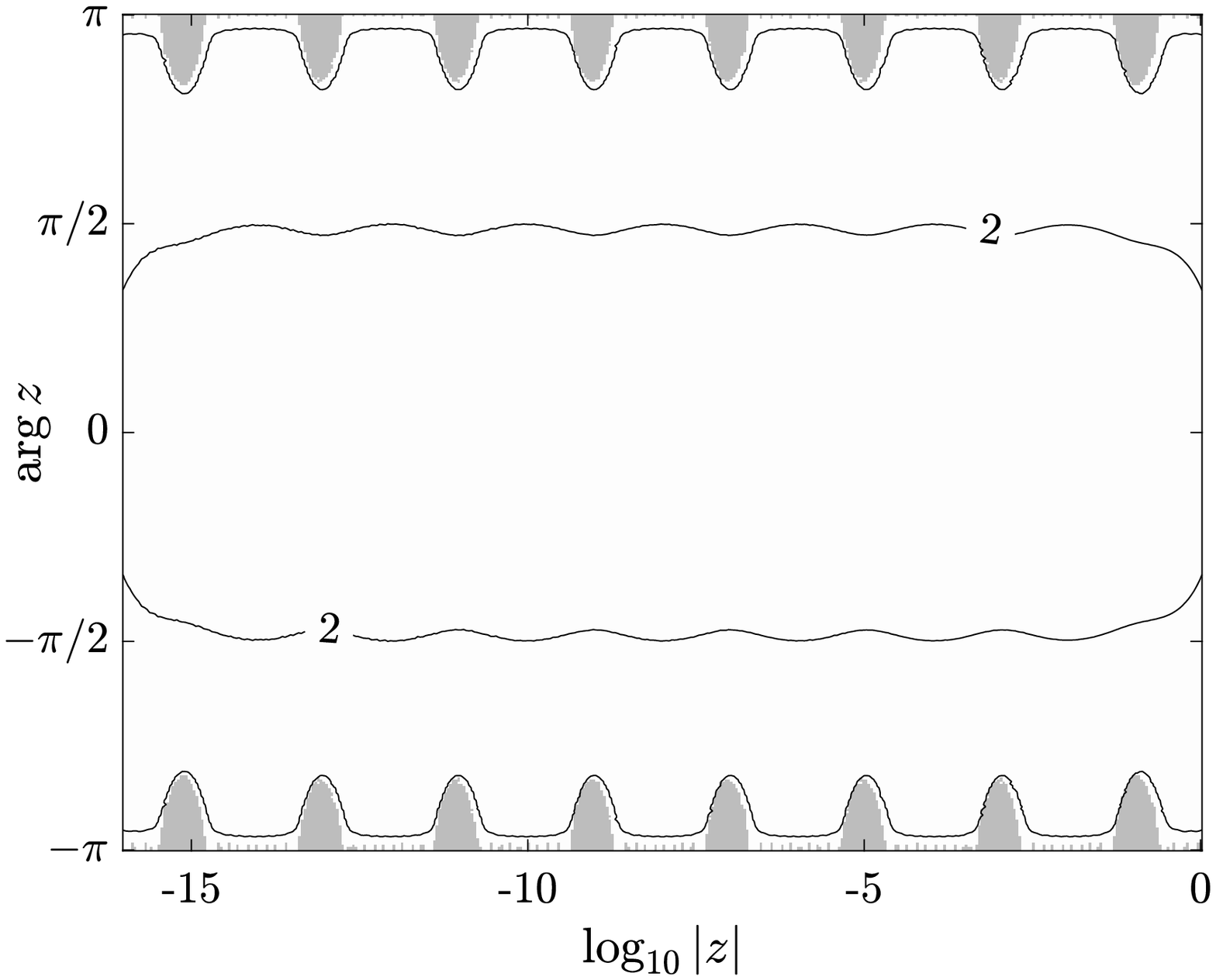} \\
\end{tabular}
\caption{Boundaries of the sets $\mathcal{S}(k;\delta,\alpha,m,\ell,p)$ with $\delta=10^{-14}$, $p=3$, $(m,\ell)=(1,1)$ (first row), $(m,\ell)=(4,4)$ (second row), $(m,\ell)=(8,8)$ (third row), $\alpha=10^{-4/3}$ (first column), $\alpha=10^{-10/3}$ (second column), and $\alpha=10^{-16/3}$ (third column).  In each plot, one of the boundaries has been selected arbitrarily and labelled with its index $k$.  Each unlabelled boundary has an index which differs by $+1$ from that of its nearest inner neighbor.  Shaded regions correspond to points $z$ for which $\lim_{k \rightarrow \infty} \widetilde{f}_k(z) \neq z^{1/p}$.}
\label{fig:mini}
\end{figure}

\begin{figure}\hspace{-0.2in}
\begin{tabular}{ccc} \vspace{0.05in}
\hspace{-0.0in}\includegraphics[scale=\scalefactor]{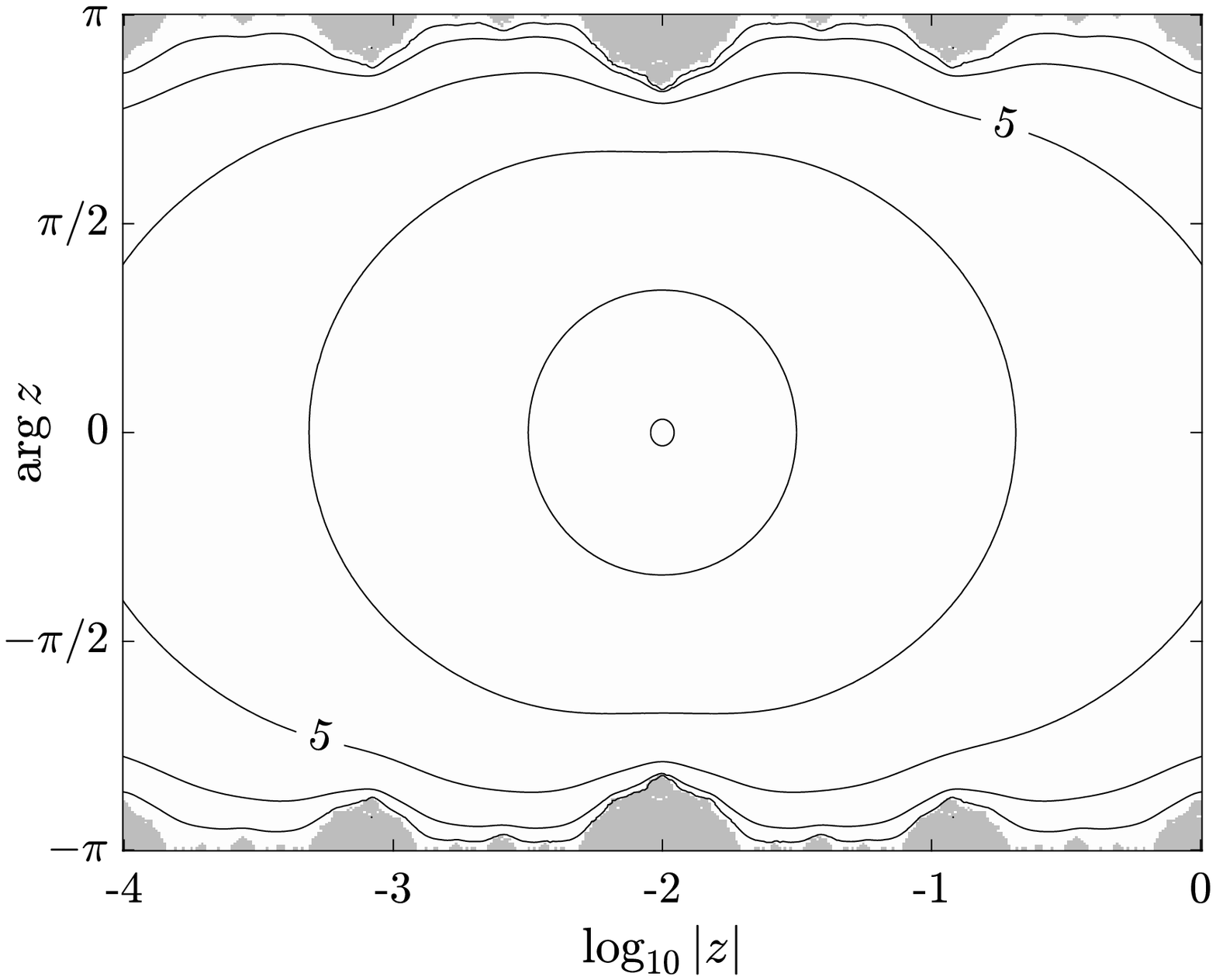} &
\hspace{-0.01in}\includegraphics[scale=\scalefactor]{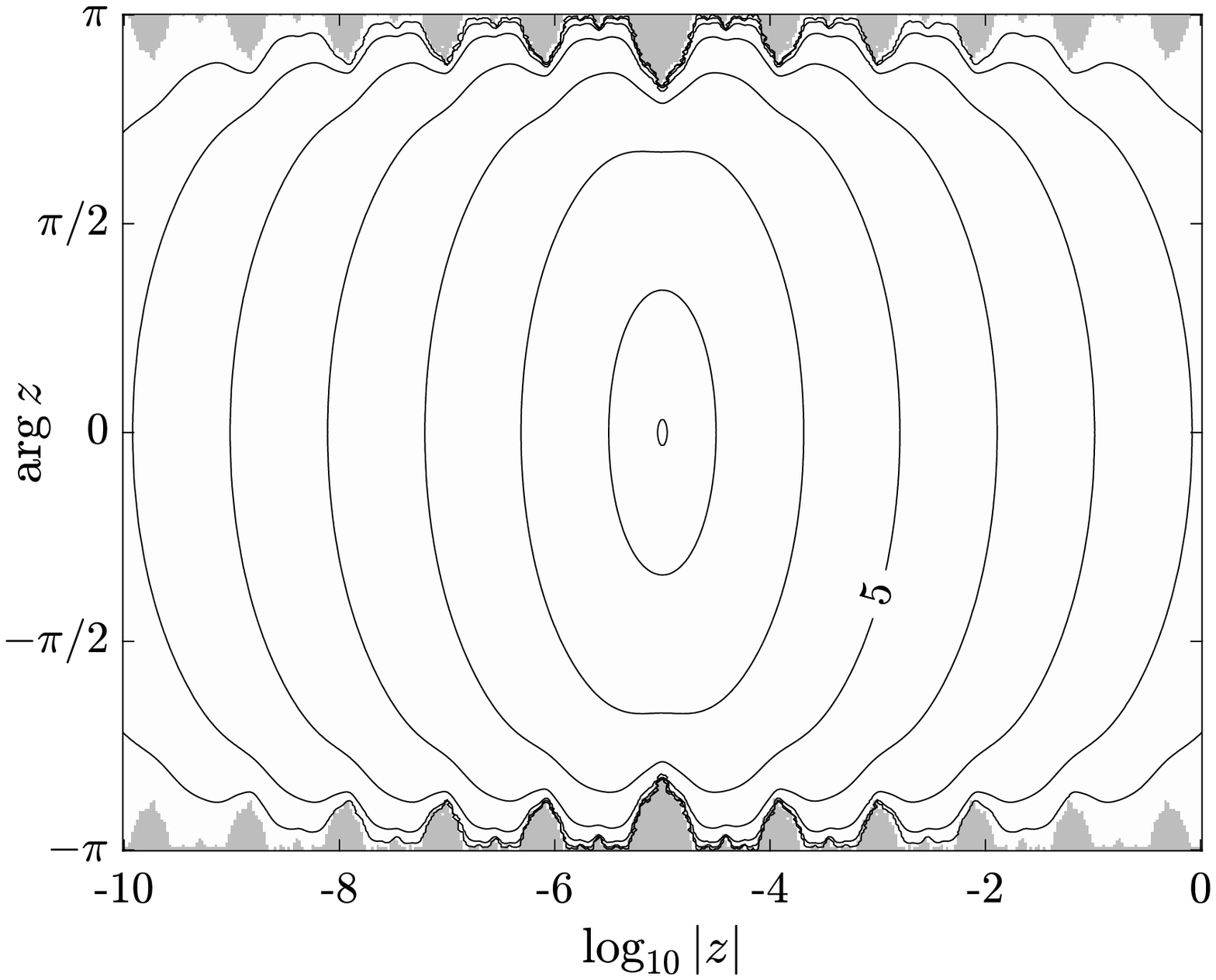} &
\hspace{-0.01in}\includegraphics[scale=\scalefactor]{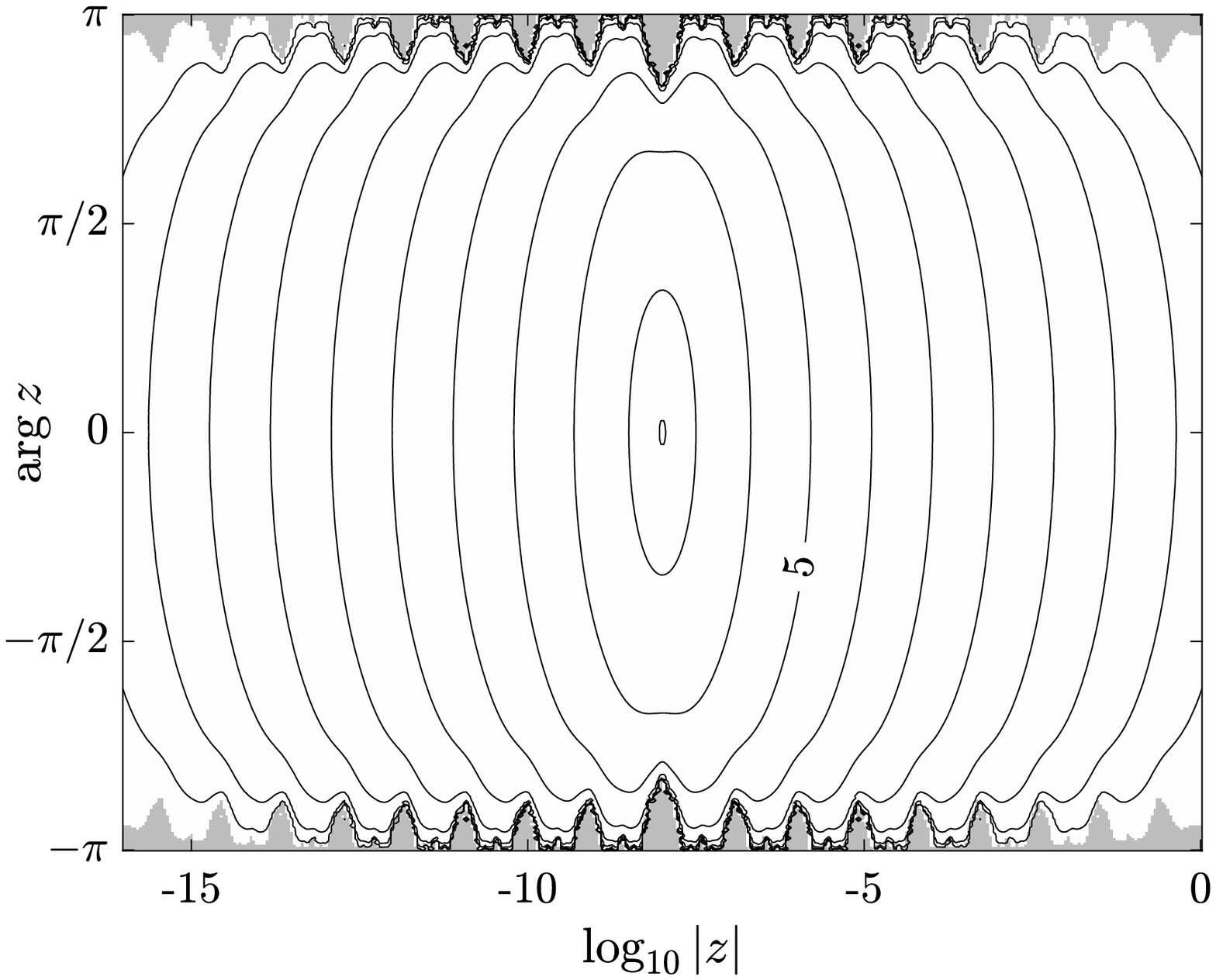} \\ \vspace{0.05in}
\hspace{-0.0in}\includegraphics[scale=\scalefactor]{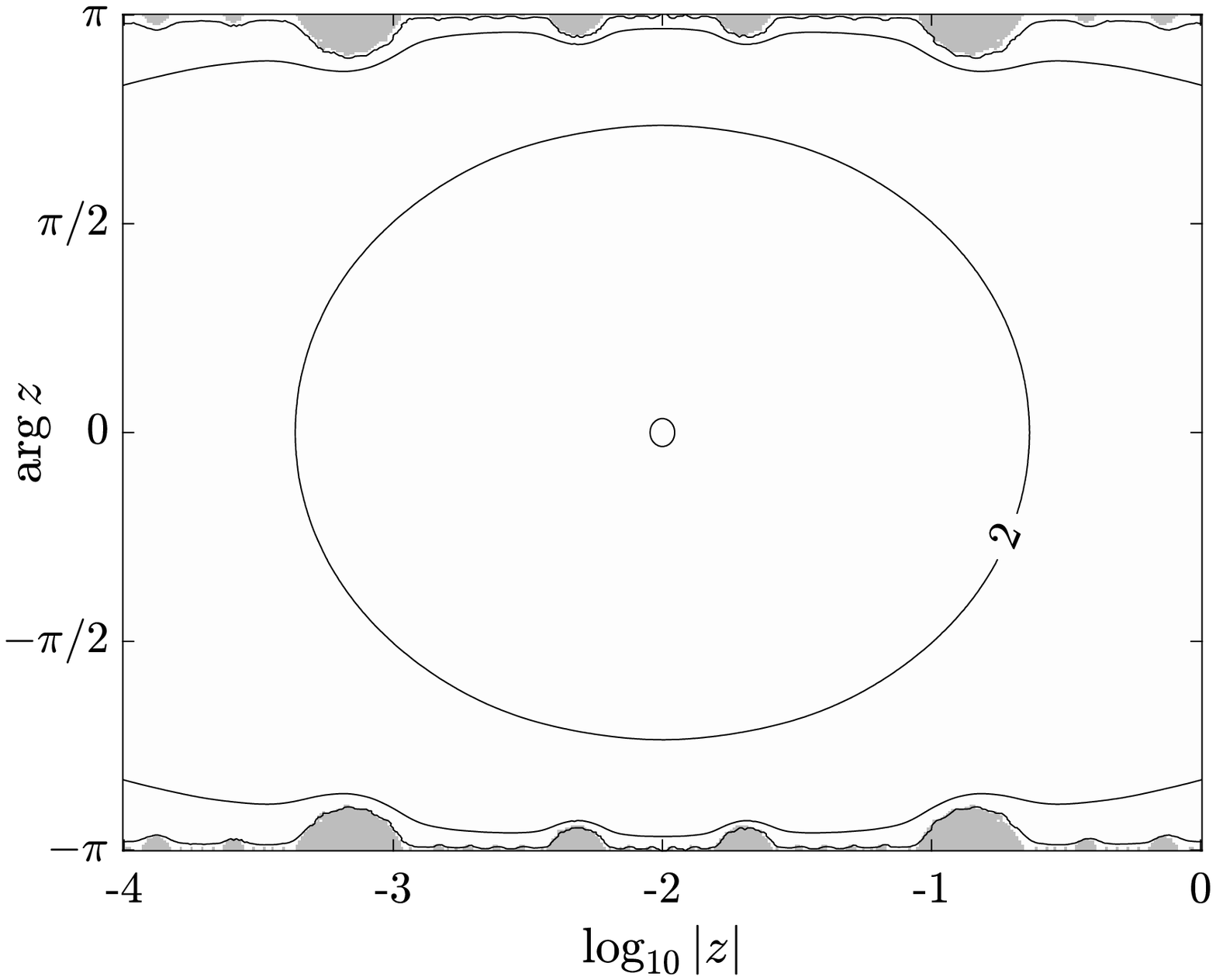} &
\hspace{-0.01in}\includegraphics[scale=\scalefactor]{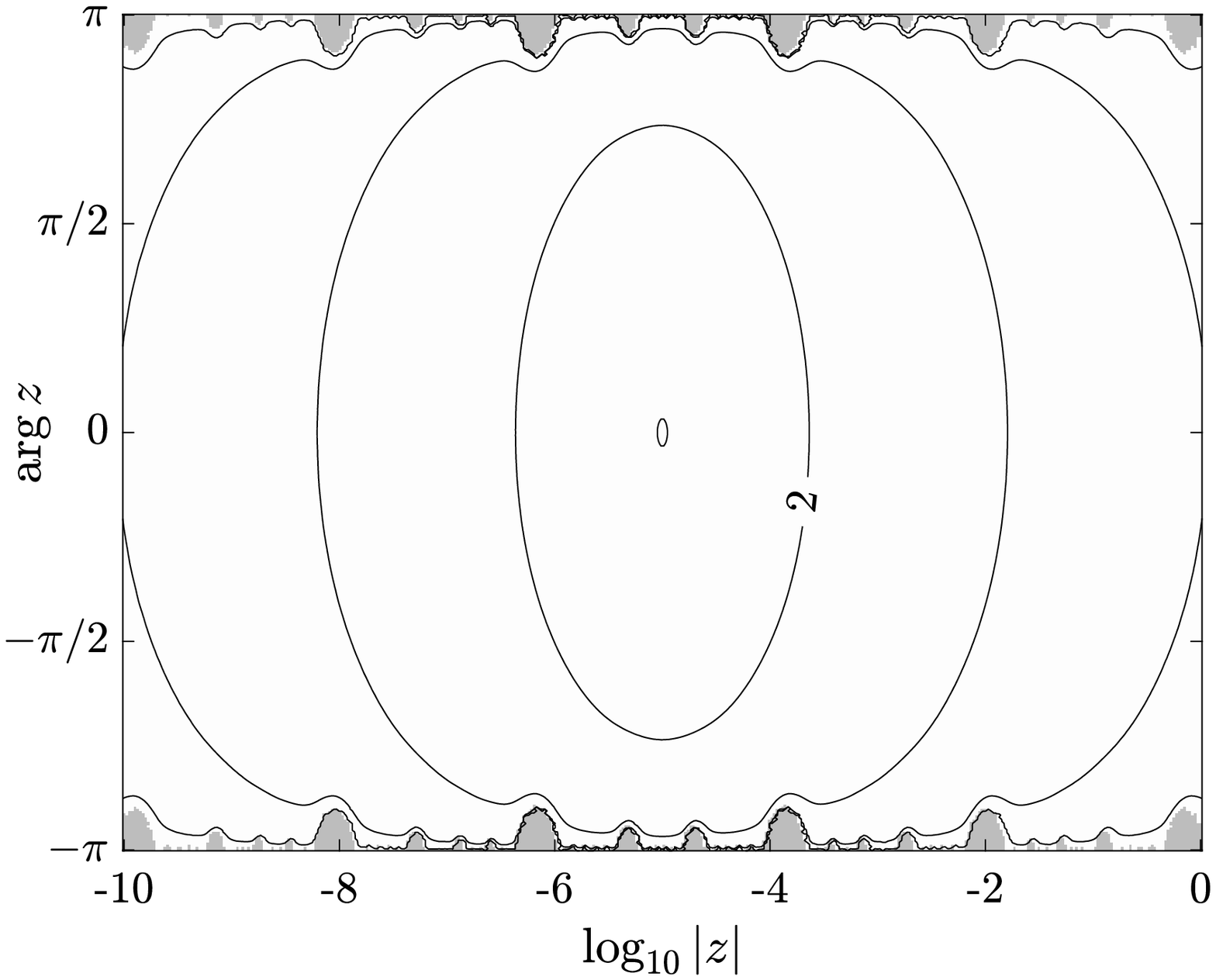} &
\hspace{-0.01in}\includegraphics[scale=\scalefactor]{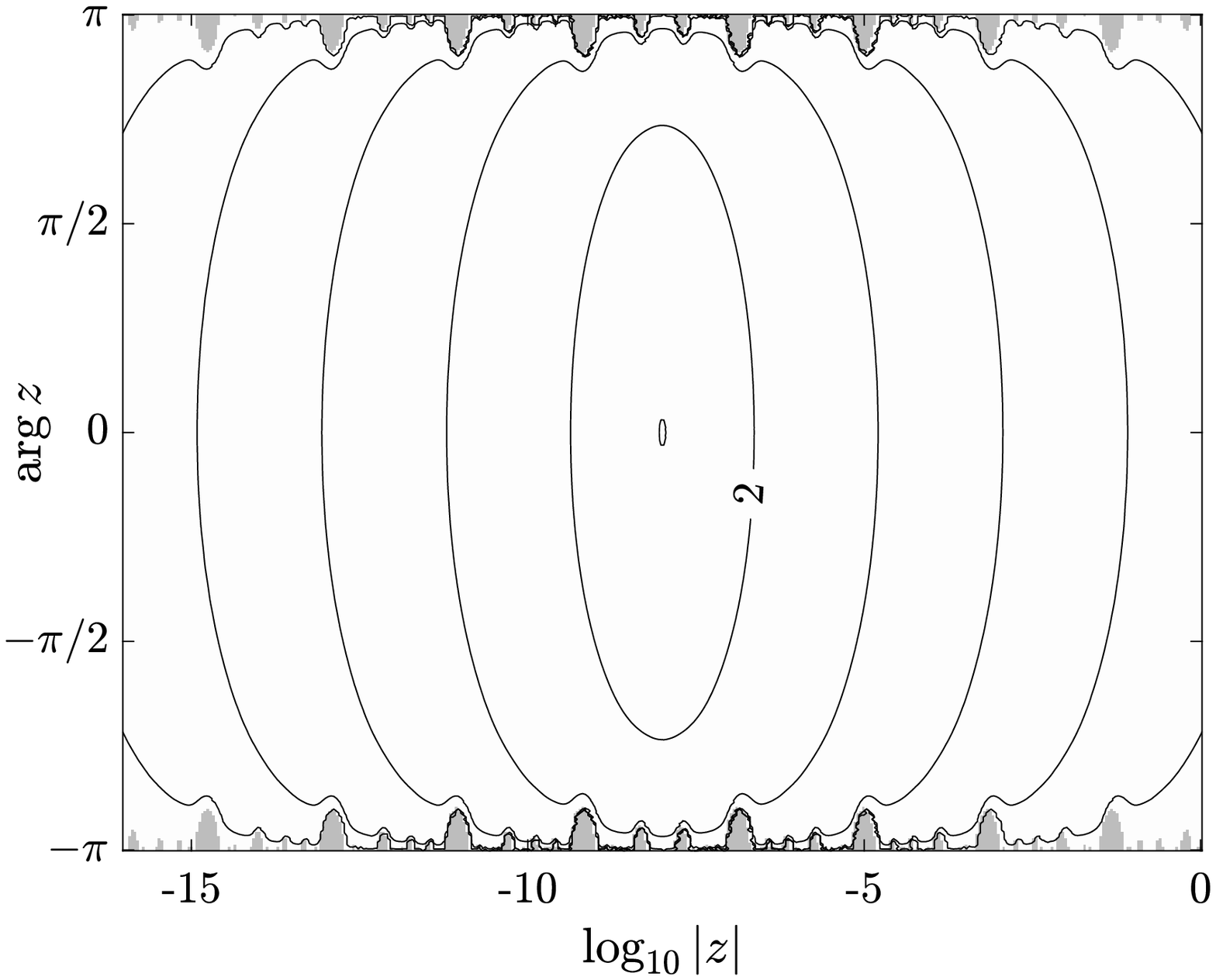} \\
\hspace{-0.0in}\includegraphics[scale=\scalefactor]{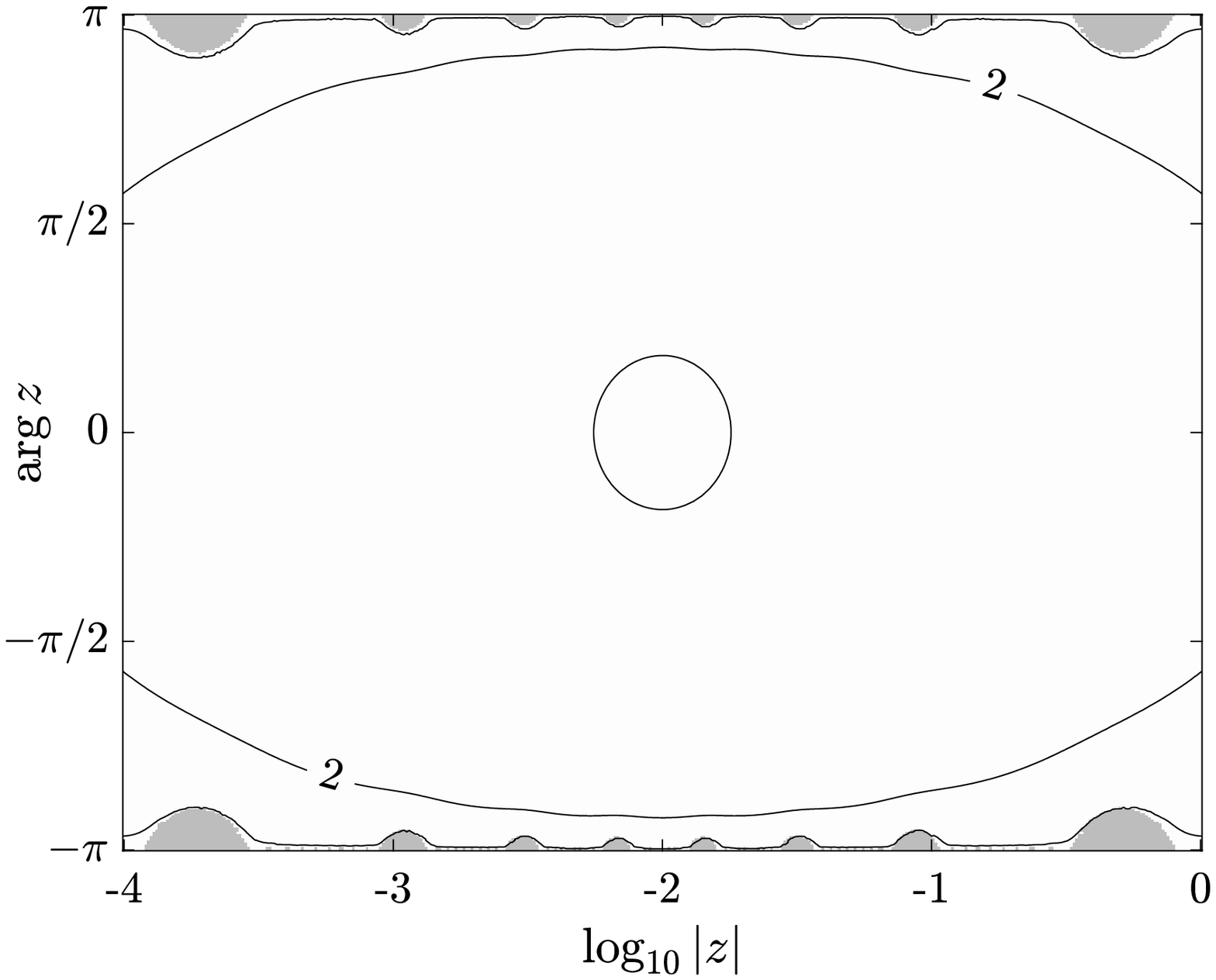} &
\hspace{-0.01in}\includegraphics[scale=\scalefactor]{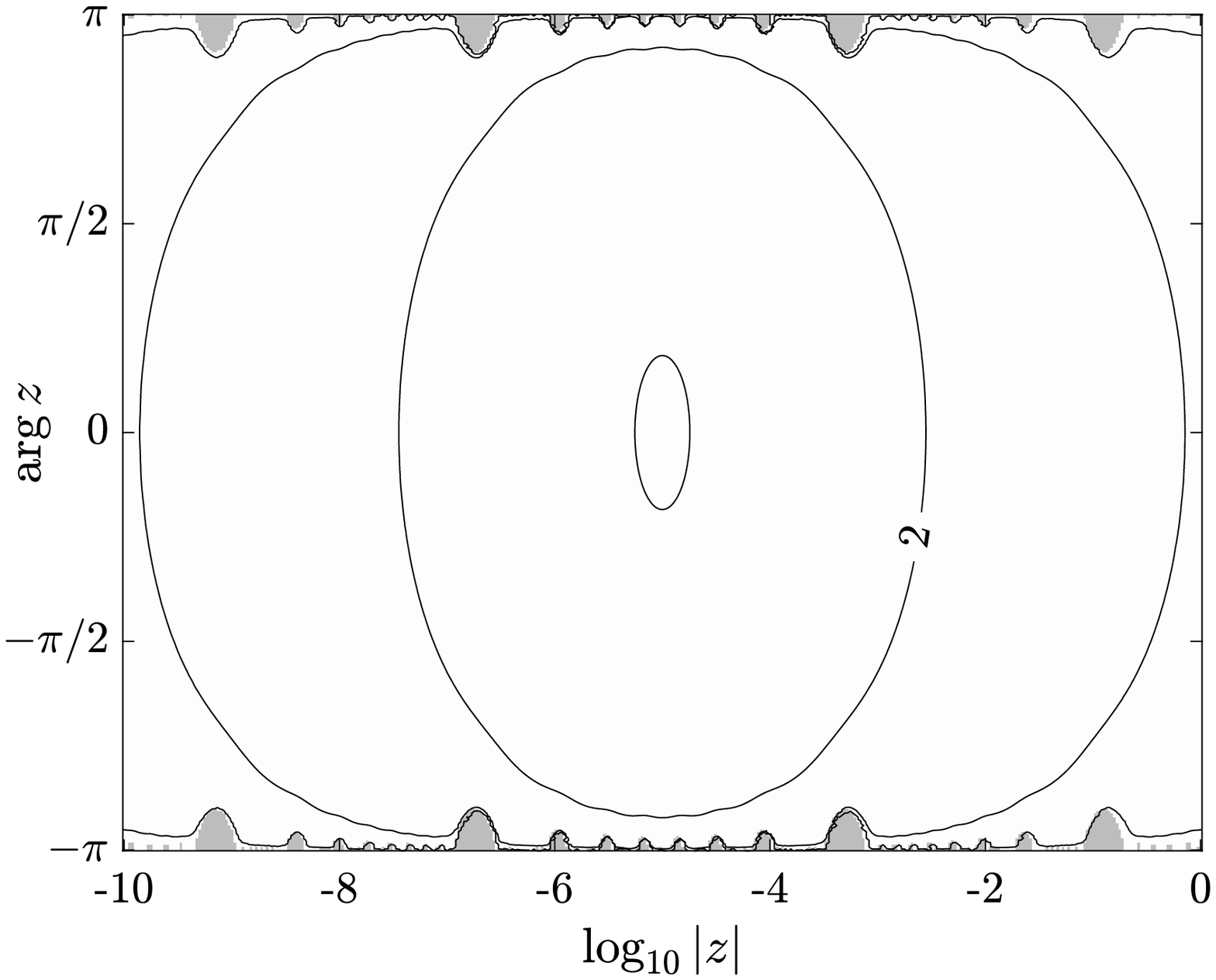} &
\hspace{-0.01in}\includegraphics[scale=\scalefactor]{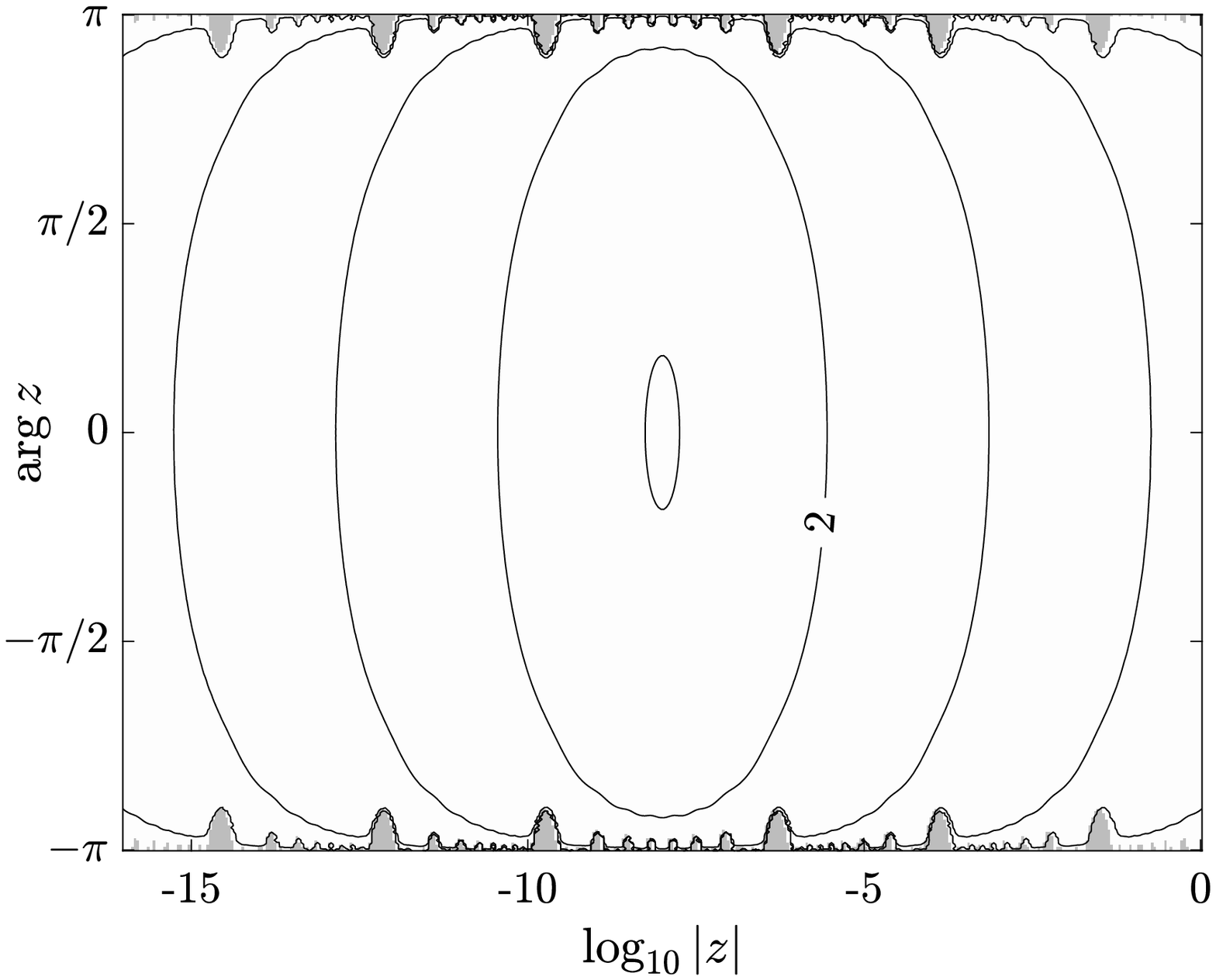} \\
\end{tabular}
\caption{Boundaries of the sets $\mathcal{T}(k;\delta,\alpha,m,\ell,p)$ with the same parameters as in Fig.~\ref{fig:mini}.}
\label{fig:pade}
\end{figure}

To study the behavior of the rational function $\widetilde{f}_k(z)$ generated by the type-$(m,\ell)$ iteration~(\ref{pthrootit1}-\ref{pthrootit2}), we numerically computed the sets
\[
\mathcal{S}(k) = \mathcal{S}(k;\delta,\alpha,m,\ell,p) =  \left\{z \in \mathbb{C} \colon \left| \frac{\widetilde{f}_k(z)-z^{1/p}}{z^{1/p}} \right| \le \delta \right\}
\]
for various choices of $\delta$, $\alpha$, $m$, $\ell$, and $p$.  The boundaries of these sets are plotted in Fig.~\ref{fig:mini}.  They are plotted in the $(\log_{10}|z|,\arg z)$ coordinate plane rather than the usual $(\Re z, \Im z)$ coordinate plane to facilitate viewing.  The shaded regions in the plots correspond to points $z \in \mathbb{C}$ for which $\lim_{k\rightarrow \infty} \widetilde{f}_k(z) \neq z^{1/p}$.  Numerical evidence indicates that at these points, $\lim_{k\rightarrow \infty} \widetilde{f}_k(z) \in  \{e^{2\pi i j/p} z^{1/p} \mid j \in \{1,2,\dots,p-1\} \}$.  Furthermore, the shaded regions have a fractal structure.  Both of these phenomena are typical features of iterations for the $p^{th}$ root when $p>2$~\cite{cardoso2011iteration}.

Fig.~\ref{fig:mini} gives valuable insight into the behavior of the matrix iteration~(\ref{pthrootmit1}-\ref{pthrootmit2}) (and, of course, its coupled counterpart~(\ref{pcoupled1}-\ref{pcoupled3})).  Indeed, if $A$ is a normal matrix with eigenvalues in $\mathcal{S}(k)$, then the iteration~(\ref{pthrootmit1}-\ref{pthrootmit2}) converges in at most $k$ iterations with a relative tolerance $\delta$ in the $2$-norm.  As an example, the plot in row 3, column 2 of Fig.~\ref{fig:mini} demonstrates that $\mathcal{S}(2)$ contains the set 
\[
\{z \in \mathbb{C} \mid \log_{10} |z| \in [-10,0],\, \arg z \in [-\pi/2,\pi/2] \} 
\]
when $(m,\ell) = (8,8)$, $p=3$, and $\alpha=10^{-10/3}$.  It follows that the type-$(8,8)$ iteration~(\ref{pthrootmit1}-\ref{pthrootmit2}) converges to $A^{1/3}$ in at most $2$ iterations for any normal matrix $A$ with spectrum in the right half plane and $|\lambda_{\mathrm{max}}(A)/\lambda_{\mathrm{min}}(A)| \le 10^{10}$.

For comparison, Fig.~\ref{fig:pade} shows the boundaries of the sets
\[
\mathcal{T}(k) = \mathcal{T}(k;\delta,\alpha,m,\ell,p) =  \left\{z \in \mathbb{C} \colon \left| \frac{\widetilde{f}_k(z/\alpha^{p/2})-(z/\alpha^{p/2})^{1/p}}{(z/\alpha^{p/2})^{1/p}} \right| \le \delta \right\},
\]
where this time $\widetilde{f}_k(z)$ is the rational function generated by~(\ref{pthrootit1}-\ref{pthrootit2}) with the initial condition $\alpha_0 = \alpha$ replaced by $\alpha_0=1$.  By Proposition~\ref{prop:pade}, the sets $\mathcal{T}(k)$ characterize the convergence behavior of the Pad\'e iteration~(\ref{padep}) (and its coupled counterpart~(\ref{padecoupled1}-\ref{padecoupled2})) with the initial iterate scaled by $1/\alpha^{p/2}$.  

Notice that for small $\alpha$ (the two rightmost columns of Fig.~\ref{fig:pade}), the sets $\mathcal{T}(k)$ do not contain scalars with extreme magnitudes ($|z|=\alpha^p$ and $|z|=1$) unless $k$ is relatively large.  Comparing, for instance, the bottom right plots in Figs.~\ref{fig:mini} and~\ref{fig:pade}, we see that if $A$ is Hermitian positive definite with spectrum in $[10^{-16},1]$, then the type-$(8,8)$ rational minimax iteration~(\ref{pthrootit1}-\ref{pthrootit2}) converges in at most $2$ iterations, whereas the type-$(8,8)$ Pad\'e iteration~(\ref{padep}) converges in at most $5$.  The same observation holds, in fact, for the type-$(6,6)$ and type-$(7,7)$ iterations, which are not shown in Figs.~\ref{fig:mini}-\ref{fig:pade}.  This is entirely analagous to the behavior observed in the case $p=2$ in~\cite[Section 5.1]{gawlik2018zolotarev}.  In fact, with the exception of the low-order iterations, Figs.~\ref{fig:mini}-\ref{fig:pade} bear a rather strong resemblance to Figs. 1-2 of~\cite{gawlik2018zolotarev}.

It is worth noting that for the low-order iterations, the sets $\{z \in \mathbb{C} \mid \lim_{k\rightarrow \infty} \widetilde{f}_k(z) \neq z^{1/p}\}$ occupy more of the complex plane when $\widetilde{f}_k(z)$ is generated from the rational minimax iteration than when $\widetilde{f}_k(z)$ is generated from the Pad\'e iteration (see the shaded regions in row 1 of Figs.~\ref{fig:mini}-\ref{fig:pade}).  This appears to be a drawback of the low-order rational minimax iterations.  The moderate-order and high-order iterations do not suffer as much from this issue; compare the shaded regions in the bottom two rows of Figs.~\ref{fig:mini}-\ref{fig:pade}, which occupy only a small neighborhood of the nonpositive real axis ($|\arg z|=\pi$).  The latter observation suggests that for moderate to high $m$ and $\ell$, it is safe to apply Algorithm~\ref{alg:minimax} to matrices with spectrum contained in $\{z \in \mathbb{C} \colon |\arg z| \le \Theta\}$, where $\Theta<\pi$ is close to $\pi$.  For matrices with eigenvalues that lie very near but not on the nonpositive real axis, a simple workaround is to compute $A^{1/2}$ using any algorithm for the matrix square root, and then compute $((A^{1/2})^{1/p})^2$.  One can also compute $((A^{1/2^s})^{1/p})^{2^s}$ with $s >1$, as in~\cite{guo2006schur,higham2011schur}, but the advantages of minimax approximation over Pad\'e approximation become less pronounced as $s$ increases, since $A^{1/2^s}$ has eigenvalues clustered near $1$ for large $s$.

\subsection{Matrix iteration} 

\SaveVerb{funm}|funm|

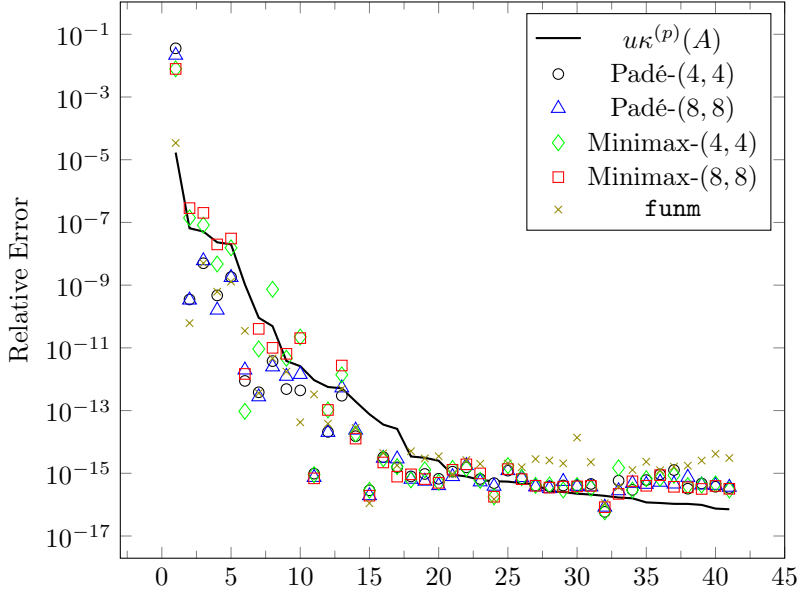
\begin{figure}
\centering
\pgfplotstableread{err.dat}{\tests}
\begin{tikzpicture} 
\begin{semilogyaxis}[width=0.8\textwidth,
minor tick num=1,
ylabel=Relative Error]
\addplot [black,thick] table [x=0, y=1] {\tests};
\addplot+[only marks,mark=*,black,mark options={fill=white}] table [x=0, y=2] {\tests};
\addplot+[only marks,mark=triangle,blue,mark options={scale=1.5}] table [x=0, y=3] {\tests};
\addplot+[only marks,mark=diamond,green,mark options={scale=1.5}] table [x=0, y=4] {\tests};
\addplot+[only marks,mark=square,red] table [x=0, y=5] {\tests};
\addplot+[only marks,mark=x,olive] table [x=0, y=6] {\tests};
\legend{$u\kappa^{(p)}(A)$\\Pad\'e-$(4,4)$\\Pad\'e-$(8,8)$\\Minimax-$(4,4)$\\Minimax-$(8,8)$\\\protect\UseVerb{funm}\\};
\end{semilogyaxis}
\end{tikzpicture}
\caption{Relative errors committed by the Pad\'e iterations of type $(4,4)$ and $(8,8)$, the minimax iterations of type $(4,4)$ and $(8,8)$, and the Matlab function \textup{\protect\UseVerb{funm}}.  Results are shown for $41$ tests with $p=3$, ordered by decreasing condition number $\kappa^{(p)}(A)$.}
\label{fig:err}
\end{figure}

\begin{table}
\centering
\pgfplotstabletypeset[
every head row/.style={before row=\toprule,after row=\midrule},
every last row/.style={after row=\bottomrule},
columns={leftcol,0,1,2,3,4,5},
create on use/leftcol/.style={create col/set list={{Pad\'e-$(4,4)$},{Pad\'e-$(8,8)$},{Minimax-$(4,4)$},{Minimax-$(8,8)$}}},
columns/leftcol/.style={string type,column type/.add={|}{|},column name={Iterations}},
columns/0/.style={fixed,precision=1,verbatim,column type/.add={}{|},column name={$1$}},
columns/1/.style={fixed,precision=1,verbatim,column type/.add={}{|},column name={$2$}},
columns/2/.style={fixed,precision=1,verbatim,column type/.add={}{|},column name={$3$}},
columns/3/.style={fixed,precision=1,verbatim,column type/.add={}{|},column name={$4$}},
columns/4/.style={fixed,precision=1,verbatim,column type/.add={}{|},column name={$5$}},
columns/5/.style={fixed,verbatim,column type/.add={}{|},column name={$\ge 6$}},
]
{iter.dat}
\vspace{0.05in}
\caption{Number of iterations used by each iterative method in the tests appearing in Fig.~\ref{fig:err}.}
\label{tab:iter}
\end{table}

To test Algorithm~\ref{alg:minimax}, we applied it to a collection of matrices of size $10 \times 10$ from the Matrix Computation Toolbox~\cite{Higham:MCT}.  We selected those $10 \times 10$ matrices in the toolbox with condition number $\le u^{-1}$ (where $u = 2^{-53}$ denotes the unit roundoff) and with spectrum contained in the sector $\{z \in \mathbb{C} \colon |\arg z| < 0.9\pi \}$.  We also included those matrices whose spectrum could be rotated into the aforementioned sector by multiplying $A$ by a suitable scalar $e^{i \theta}$, $\theta \in [0,2\pi]$.  A total of 41 matrices met these criteria.  

Fig.~\ref{fig:err} plots the relative error $\|\widehat{X}-A^{1/p}\|_\infty / \|A^{1/p}\|_\infty$ in the computed $p^{th}$ root $\widehat{X}$ of $A$ for each of the $41$ matrices, where $p=3$.  The tests are sorted in order of decreasing $\kappa^{(p)}(A)$, where 
\[
\kappa^{(p)}(A) = \frac{\|A\|_F}{\|X\|_F} \left\| \left(\sum_{j=1}^p (X^{p-j})^T \otimes X^{j-1} \right)^{-1} \right\|_2
\]
denotes the Frobenius-norm relative condition number of the matrix $p^{th}$ root $X$ of $A$~\cite[Problem 7.4]{higham2008functions}.  Results for five methods are shown: the rational minimax iterations~(\ref{pcoupled1}-\ref{pcoupled3}) of type $(4,4)$ and $(8,8)$, the Pad\'e iterations~(\ref{padecoupled1}-\ref{padecoupled2}) of type $(4,4)$ and $(8,8)$, and the built-in Matlab function \verb$funm$.  The Pad\'e iterations were implemented using Algorithm~\ref{alg:minimax} with Lines~\ref{line:scaling}-\ref{line:alpha0} replaced by $\tau = 1/\sqrt{|\lambda_{\mathrm{min}}(A)\lambda_{\mathrm{max}}(A)|}$ and $\alpha_0=1$.  The results indicate that the algorithms under consideration behave in a forward stable way, with relative errors mostly lying within a small factor of $u \kappa^{(p)}(A)$.

In Table~\ref{tab:iter}, the number of iterations used by each iterative method on the 41 tests are recorded.  In analogy with the results of~\cite{gawlik2018zolotarev}, the rational minimax iterations very often converged more quickly than the Pad\'e iterations on these tests.

\section{Conclusion}

This paper has constructed and analyzed a family of iterations for computing the matrix $p^{th}$ root using rational minimax approximants of the function $z^{1/p}$.  The output of each step $k$ of the type-$(m,\ell)$ iteration is a rational function $r$ of $A$ with the property that the scalar function $e(z)=(r(z)-z^{1/p})/z^{1/p}$ equioscillates $(m+\ell+1)^k+1$ times on $[\alpha^p,1]$, where $\alpha \in (0,1)$ is a parameter depending on $A$.  With the exception of the Zolotarev iterations (i.e. $p=2$ and $\ell \in \{m-1,m\}$), this equioscillatory behavior does not render $\max_{\alpha^p \le z \le 1} |e(z)|$ minimal among all choices of $r$ with the same numerator and denominator degree.  Nevertheless, we have shown that many of the desirable features of the Zolotarev iterations carry over to the general setting.  A key role in the analysis was played by the asymptotic behavior of rational minimax approximants on short intervals.  

Several topics mentioned in this paper are worth pursuing in more detail.  Remark~\ref{remark:sector} leads naturally to a family of rational minimax iterations for the matrix sector function $\mathrm{sect}_p(A) = A(A^p)^{-1/p}$.  As $\alpha \uparrow 1$, these iterations likely reduce to the Pad\'e iterations for the sector function studied by Laszkiewicz and Zi\k{e}tak~\cite[Section 5]{laszkiewicz2009pade}, so the results therein could inform an analysis of the convergence of the rational minimax iterations on matrices that are non-normal and/or have spectrum away from the positive real axis.  Another topic of interest is computing the action of $A^{1/p}$ on a vector $b$ using rational minimax iterations.  Li and Yang~\cite{li2017interior} address a similar task: computing the action of a spectral filter on $b$ using Zolotarev iterations for $\mathrm{sign}(z)$.  It my may be possible to construct a similar algorithm for computing $A^{1/p}b$.  Finally, the functional iteration~(\ref{pthrootit1}-\ref{pthrootit2}) is of interest in its own right, as it offers a method of rapidly generating rational approximants of $z^{1/p}$ with small relative error, a tool that may have applications in, for instance, numerical conformal mapping~\cite{gopal2018representation}.

\section*{Acknowledgments}

The author was supported in part by the NSF under grant DMS-1703719.

\bibliography{references}

\begin{thebibliography}{10}

\bibitem{akhiezer1956theory}
{\sc N.~I. Akhiezer}, {\em Theory of approximation}, Frederick Ungar Publishing
  Corporation, 1956.

\bibitem{beckermann2013optimally}
{\sc B.~Beckermann}, {\em Optimally scaled {N}ewton iterations for the matrix
  square root}, Advances in Matrix Functions and Matrix Equations workshop,
  Manchester, UK, 2013.

\bibitem{bini2005algorithms}
{\sc D.~A. Bini, N.~J. Higham, and B.~Meini}, {\em Algorithms for the matrix
  pth root}, Numerical Algorithms, 39 (2005), pp.~349--378.

\bibitem{byers2008new}
{\sc R.~Byers and H.~Xu}, {\em A new scaling for {N}ewton's iteration for the
  polar decomposition and its backward stability}, SIAM Journal on Matrix
  Analysis and Applications, 30 (2008), pp.~822--843.

\bibitem{cardoso2011iteration}
{\sc J.~R. Cardoso and A.~F. Loureiro}, {\em Iteration functions for pth roots
  of complex numbers}, Numerical Algorithms, 57 (2011), pp.~329--356.

\bibitem{driscoll2014chebfun}
{\sc T.~A. Driscoll, N.~Hale, and L.~N. Trefethen}, {\em Chebfun guide}, 2014.

\bibitem{gawlik2018zolotarev}
{\sc E.~S. Gawlik}, {\em Zolotarev iterations for the matrix square root},
  arXiv preprint 1804.11000,  (2018).

\bibitem{gawlik2018backward}
{\sc E.~S. Gawlik, Y.~Nakatsukasa, and B.~D. Sutton}, {\em A backward stable
  algorithm for computing the {CS} decomposition via the polar decomposition},
  SIAM Journal on Matrix Analysis and Applications, 39 (2018), pp.~1448--1469.

\bibitem{gomilko2012pade}
{\sc O.~Gomilko, F.~Greco, and K.~Zi\k{e}tak}, {\em A {P}ad{\'e} family of
  iterations for the matrix sign function and related problems}, Numerical
  Linear Algebra with Applications, 19 (2012), pp.~585--605.

\bibitem{gomilko2012regions}
{\sc O.~Gomilko, D.~B. Karp, M.~Lin, and K.~Zi\k{e}tak}, {\em Regions of
  convergence of a {P}ad{\'e} family of iterations for the matrix sector
  function and the matrix pth root}, Journal of Computational and Applied
  Mathematics, 236 (2012), pp.~4410--4420.

\bibitem{gopal2018representation}
{\sc A.~Gopal and L.~N. Trefethen}, {\em Representation of conformal maps by
  rational functions}, arXiv preprint arXiv:1804.08127,  (2018).

\bibitem{guo2010newton}
{\sc C.-H. Guo}, {\em On {N}ewton's method and {H}alley's method for the
  principal pth root of a matrix}, Linear Algebra and its Applications, 432
  (2010), pp.~1905--1922.

\bibitem{guo2006schur}
{\sc C.-H. Guo and N.~J. Higham}, {\em A {S}chur--{N}ewton method for the
  matrix $p$th root and its inverse}, SIAM Journal on Matrix Analysis and
  Applications, 28 (2006), pp.~788--804.

\bibitem{Higham:MCT}
{\sc N.~J. Higham}, {\em The matrix computation toolbox}.
\newblock \url{http://www.ma.man.ac.uk/~higham/mctoolbox}.

\bibitem{higham2008functions}
{\sc N.~J. Higham}, {\em Functions of matrices: {T}heory and computation},
  SIAM, 2008.

\bibitem{higham2011schur}
{\sc N.~J. Higham and L.~Lin}, {\em A {S}chur--{P}ad{\'e} algorithm for
  fractional powers of a matrix}, SIAM Journal on Matrix Analysis and
  Applications, 32 (2011), pp.~1056--1078.

\bibitem{hoskins1979faster}
{\sc W.~Hoskins and D.~Walton}, {\em A faster, more stable method for computing
  the pth roots of positive definite matrices}, Linear Algebra and its
  Applications, 26 (1979), pp.~139--163.

\bibitem{iannazzo2006newton}
{\sc B.~Iannazzo}, {\em On the {N}ewton method for the matrix pth root}, SIAM
  Journal on Matrix Analysis and Applications, 28 (2006), pp.~503--523.

\bibitem{iannazzo2008family}
{\sc B.~Iannazzo}, {\em A family of rational iterations and its application to
  the computation of the matrix pth root}, SIAM Journal on Matrix Analysis and
  Applications, 30 (2008), pp.~1445--1462.

\bibitem{king1971improved}
{\sc R.~F. King}, {\em Improved {N}ewton iteration for integral roots},
  Mathematics of Computation, 25 (1971), pp.~299--304.

\bibitem{laszkiewicz2009pade}
{\sc B.~Laszkiewicz and K.~Zi\k{e}tak}, {\em A {P}ad{\'e} family of iterations
  for the matrix sector function and the matrix pth root}, Numerical Linear
  Algebra with Applications, 16 (2009), pp.~951--970.

\bibitem{li2017interior}
{\sc Y.~Li and H.~Yang}, {\em Interior eigensolver for sparse {H}ermitian
  definite matrices based on {Z}olotarev's functions}, arXiv preprint
  arXiv:1701.08935,  (2017).

\bibitem{maehly1960tschebyscheff}
{\sc H.~Maehly and C.~Witzgall}, {\em Tschebyscheff-approximationen in kleinen
  intervallen {II}}, Numerische Mathematik, 2 (1960), pp.~293--307.

\bibitem{meinardus1980optimal}
{\sc G.~Meinardus and G.~Taylor}, {\em Optimal partitioning of {N}ewton's
  method for calculating roots}, Mathematics of Computation, 35 (1980),
  pp.~1221--1230.

\bibitem{nakatsukasa2010optimizing}
{\sc Y.~Nakatsukasa, Z.~Bai, and F.~Gygi}, {\em Optimizing {H}alley's iteration
  for computing the matrix polar decomposition}, SIAM Journal on Matrix
  Analysis and Applications, 31 (2010), pp.~2700--2720.

\bibitem{nakatsukasa2016computing}
{\sc Y.~Nakatsukasa and R.~W. Freund}, {\em Computing fundamental matrix
  decompositions accurately via the matrix sign function in two iterations:
  {T}he power of {Z}olotarev's functions}, SIAM Review, 58 (2016),
  pp.~461--493.

\bibitem{stahl2003best}
{\sc H.~R. Stahl}, {\em Best uniform rational approximation of $x^\alpha$ on
  [0, 1]}, Acta Mathematica, 190 (2003), pp.~241--306.

\bibitem{trefethen2013approximation}
{\sc L.~N. Trefethen}, {\em Approximation theory and approximation practice},
  vol.~128, SIAM, 2013.

\bibitem{trefethen1983caratheodory}
{\sc L.~N. Trefethen and M.~H. Gutknecht}, {\em The
  {C}arath{\'e}odory-{F}ej{\'e}r method for real rational approximation}, SIAM
  Journal on Numerical Analysis,  (1983), pp.~420--436.

\bibitem{trefethen1985convergence}
{\sc L.~N. Trefethen and M.~H. Gutknecht}, {\em On convergence and degeneracy
  in rational {P}ad{\'e} and {C}hebyshev approximation}, SIAM Journal on
  Mathematical Analysis, 16 (1985), pp.~198--210.

\bibitem{zolotarev1877applications}
{\sc E.~I. Zolotarev}, {\em Applications of elliptic functions to problems of
  functions deviating least and most from zero}, Zapiski St-Petersburg Akad.
  Nauk, 30 (1877), pp.~1--59.

\end{thebibliography}

\end{document}